\def\@settitle{\begin{center}%
		\baselineskip14\p@\relax
		\normalfont\LARGE\scshape\bfseries
		\@title
	\end{center}%
}
\def\subsection{\@startsection{subsection}{2}%
	\z@{.5\linespacing\@plus.7\linespacing}{.5\linespacing}%
	{\normalfont\large\bfseries}}
\def\subsubsection{\@startsection{subsubsection}{3}%
	\z@{.5\linespacing\@plus.7\linespacing}{.5\linespacing}%
	{\normalfont\itshape}}
\definecolor{darkblue}{rgb}{0.0, 0.0, 0.45}
\date{\today}
\theoremstyle{theorem}
\newtheorem{Thm}{Theorem}[section]
\newtheorem{Prop}[Thm]{Proposition}
\newtheorem{Cor}[Thm]{Corollary}
\newtheorem{As}[Thm]{Assumption}
\newtheorem{Def}[Thm]{Definition}
\newtheorem{Rem}[Thm]{Remark}
\theoremstyle{remark}
\newtheorem{Ex}{Example}
\newcommand{\Max}{\max\limits_}
\newcommand{\Sup}{\sup\limits_}
\newcommand{\Inf}{\inf\limits_}
\newcommand{\R}{\mathbb{R}}
\newcommand{\PP}{\mathds{P}}
\newcommand{\EE}{\mathds{E}}
\newcommand{\ra}{\rightarrow}
\newcommand{\Let}{\coloneqq}
\newcommand{\diff}{\mathrm{d}}
\newcommand{\wt}{\widetilde}
\newcommand{\wh}{\widehat}
\newcommand{\tr}{^\intercal}
\newcommand{\opt}{^\star}
\newcommand{\eps}{\varepsilon}
\newcommand{\X}{\mathbb{X}}
\newcommand{\F}{\mathcal{F}}
\newcommand{\amb}{\wh{\mathcal{P}}}
\newcommand{\xid}{\wh{\xi}}
\newcommand{\xd}{\wh{x}}
\newcommand{\sd}{\wh{s}}
\newcommand{\Pem}{{\wh{\PP}_N}}
\newcommand{\Q}{\mathds{Q}}
\newcommand{\M}{\mathcal{M}}
\newcommand{\dir}[1]{\delta_{#1}}
\newcommand{\Wass}[3]{{\mathrm W}_{#1}\big(#2,#3\big)}
\newcommand{\inner}[2]{\big \langle #1, #2 \big \rangle }
\newcommand{\ball}[3]{\mathbb{B}^{#1}_{#3}(#2)} 
\newcommand{\st}{\normalfont \text{s.t.}}
\newcommand{\cvar}{{\rm CVaR}}
\newcommand{\var}{{\rm VaR}}
\newcommand{\cert}{J}
\newcommand{\risk}{\rho}
\newcommand{\gec}[1]{\succeq_{#1}}
\newcommand{\coneX}{\mathcal{K}}
\newcommand{\coneXi}{\mathcal{C}}
\newcommand{\gesdp}{\succeq}
\newcommand{\s}{\mathbb{S}}
\newcommand{\Mini}{\mathop{\text{minimize}}}
\DeclareMathOperator*{\argmin}{arg\,min}
\newcommand{\ch}[1]{{\rm \bf C}#1}
\let\P\PP
\let\S\s
\title[Data-driven Inverse Optimization with {Imperfect} Information]
{Data-driven Inverse Optimization with {Imperfect} Information}
\author[P. Mohajerin Esfahani, S. Shafieezadeh-Abadeh, G.A. Hanasusanto, D. Kuhn]{Peyman Mohajerin Esfahani, Soroosh Shafieezadeh-Abadeh, \\ Grani A. Hanasusanto, and Daniel Kuhn}%
	\thanks{The authors are with the Delft Center for Systems and Control, TU Delft, The Netherlands ({\tt P.MohajerinEsfahani@tudelft.nl}), the Risk Analytics and Optimization Chair, EPFL, Switzerland (\texttt{\{soroosh.shafiee,daniel.kuhn\}@epfl.ch}) and the Graduate Program in Operations Research and Industrial Engineering, UT Austin, USA (\texttt{grani.hanasusanto@utexas.edu}).}
\begin{document}
\maketitle

\begin{abstract}
	In data-driven inverse optimization an observer aims to learn the preferences of an agent who solves a parametric optimization problem depending on an exogenous signal. Thus, the observer seeks the agent's objective function that best explains a historical sequence of signals and corresponding optimal actions. {We focus here on situations where the observer has imperfect information, that is, where the agent's true objective function is not contained in the search space of candidate objectives, where the agent suffers from bounded rationality or implementation errors, or where the observed signal-response pairs are corrupted by measurement noise. We formalize this inverse optimization problem as a distributionally robust program minimizing the worst-case risk that the {\em predicted} decision ({\em i.e.}, the decision implied by a particular candidate objective) differs from the agent's {\em actual} response to a random signal. We show that our framework offers rigorous out-of-sample guarantees for different loss functions used to measure prediction errors and that the emerging inverse optimization problems can be exactly reformulated as (or safely approximated by) tractable convex programs when a new suboptimality loss function is used. We show through extensive numerical tests that the proposed distributionally robust approach to inverse optimization attains often better out-of-sample performance than the state-of-the-art approaches.}
\end{abstract}

\section{Introduction} 

In inverse optimization an observer aims to learn the preferences of an agent who solves a parametric optimization problem depending on an exogenous signal. The observer knows the constraints imposed on the agent's actions but is unaware of her objective function. By monitoring a sequence of signals and corresponding actions, the observer seeks to identify an objective function that makes the observed actions optimal in the agent's optimization problem. This learning problem can be cast as an {\em inverse optimization problem} over candidate objective functions. The hope is that the solution of this inverse problem enables the observer to predict the agent's future actions in response to new signals.

Inverse optimization has a wide spectrum of applications spanning several disciplines ranging from econometrics and operations research to engineering and biology. For example, a marketing executive aims to understand the purchasing behavior of consumers with unknown utility functions by monitoring sales figures~\cite{ackerberg2007econometric,bertsimas2014data,bajari2004estimating}, a transportation planner wishes to learn the route choice preferences of the passengers in a multimodal transport system by measuring traffic flows~\cite{ahuja2000faster,ref:Bra-05,burton1992instance,farago2003inverse}, or a healthcare manager seeks to design clinically acceptable treatments in view of historical treatment plans~\cite{chan2014generalized}. It is even believed that the behavior of many biological systems is governed by a principle of optimality with respect to an unknown decision criterion, which can be inferred by tracking the system~\cite{bottasso2006numerica,terekhov2010analytical}. Inverse optimization has also been applied in geoscience~\cite{neumann1984inversion,woodhouse1984mapping}, portfolio selection~\cite{bertsimas2012inverse, iyengar2005inverse}, production planning~\cite{troutt2006behavioral}, inventory management~\cite{carr2000inverse}, network design and control~\cite{ahuja2000faster,burton1992instance,hochbaum2003efficient,farago2003inverse} {and the analysis of electricity prices \cite{ref:GallMor-16}.}

The main thrust of the early literature on inverse optimization is to identify an objective function that explains a {\em single} observation. In the seminal paper~\cite{ahuja2001inverse} the agent solves a static (non-parametric) linear program and reveals her optimal decision to the observer, who then identifies the objective function closest to a prescribed nominal objective, under which the observed decision is optimal. This model was later extended to conic programs~\cite{iyengar2005inverse}, integer programs~\cite{ahmed2005inverse,heuberger2004inverse,schaefer2009inverse,wang2009cutting} and  linearly constrained separable convex programs~\cite{zhang2010inverse}. Another variant of this problem is considered in~\cite{ahmed2005inverse}, where the observer identifies an admissible objective function for which the optimal value of the agent's problem is closest to the observed optimal value corresponding to the unknown true objective.

{This paper focuses on {\em data-driven} inverse optimization problems where the agent solves a parametric optimization problem {\em several times}. Accordingly, the observer has access to a finite sequence of signals and corresponding optimal responses. Using this training data, the observer aims to infer an objective function that accurately predicts the agent's optimal responses to unseen future signals. As in classical regression, this learning task could be addressed by minimizing an empirical loss that penalizes the mismatch between the predicted and true optimal responses to a given signal. {\em Data-driven} inverse optimization problems of this type have only just started to attract attention, and to the best of our knowledge there are currently only three papers that study such problems. In~\cite{keshavarz2011imputing} the observer seeks an objective function under which all observed decisions solve the Karush-Kuhn-Tucker (KKT) optimality conditions of the agent's convex optimization problem. To this end, the observer minimizes some norm of the KKT residuals at all observations. A similar goal is pursued in~\cite{bertsimas2014data}, where the optimality conditions are expressed via variational inequalities that can be reformulated as tractable conic constraints using ideas from robust optimization. This approach has the additional benefit that it extends to more general inverse equilibrium problems, which indicates that inverse optimization problems constitute special instances of mathematical programs with equilibrium constraints. A comprehensive survey of variational inequalities and mathematical programs with equilibrium constraints is provided in~\cite{Harker1990}. The third paper suggests to minimize the empirical average of the squared Euclidean distances between the predicted and true observed decisions, in which case the data-driven inverse optimization problem reduces to a bilevel program~\cite{aswani2015inverse}. 

In summary, all existing approaches to data-driven inverse optimization solve an empirical loss minimization problem over some search space of candidate objectives. Different approaches mainly differ with respect to the loss functions that capture the mismatch between predictions and observations. The {\em KKT loss} used in~\cite{keshavarz2011imputing} quantifies the extent to which the observed response to some signal violates the KKT conditions for a fixed candidate objective. Similarly, the {\em first-order loss} used in \cite{bertsimas2014data} measures the extent to which an observed response violates the first-order optimality conditions. Moreover, the {\em predictability loss} used in~\cite{aswani2015inverse} captures the squared distance between an observed response and the response predicted by a given candidate objective. 

In this paper we introduce the new {\em suboptimality loss}, which quantifies the degree of suboptimality of an observed response under a given candidate objective. We highlight that the predictability and suboptimality losses both enjoy a direct physical meaning, while the KKT and first-order losses are not as easily interpretable. 

Computational experiments in~\cite{keshavarz2011imputing} and~\cite{bertsimas2014data} suggest that empirical loss minimization problems under perfect information are likely to correctly identify the agent's true objective function if there is sufficient training data and the search space of candidate objectives is not too large. In any realistic setting, however, the observer is confronted with imperfect information such as {\em model uncertainty} (the agent's true objective is not one of the candidate objectives), {\em noisy measurements} (the observed signals and responses are corrupted by measurement errors) or {\em bounded rationality} (the agent settles for suboptimal responses due to cognitive or computational limitations). Due to overfitting effects, imperfect information can severely impair the predictive power of a candidate objective obtained via empirical loss minimization. This is simply a manifestation of the notorious `garbage in-garbage out' phenomenon. As imperfect information certainly reflects the norm rather than the exception in inverse optimization, we propose here a systematic approach to combat overfitting via distributionally robust optimization. Specifically, inspired by~\cite{ref:MohKun-14} and~\cite{ref:ShaMohKuh-15}, we use the Wasserstein distance to construct a ball in the space of all signal-response-distributions centered at the empirical distribution on the training samples, and we formulate a distributionally robust inverse optimization problem that minimizes the worst-case risk of loss for any combination of a risk measure with a loss function, where the worst case is taken over all distributions in the Wasserstein ball. If the radius of the Wasserstein ball is chosen judiciously, we can guarantee that it contains the unknown data-generating distribution with high confidence, which in turn allows us to derive rigorous out-of-sample guarantees for the risk of loss of unseen future observations. The proposed distributionally robust inverse optimization problem can naturally be interpreted as a regularization of the corresponding empirical loss minimization problem. While regularization is known to improve the out-of-sample performance of numerous estimators in statistics, it has not yet been investigated systematically in the context of data-driven inverse optimization.

We highlight the following main contributions of this paper relative to the existing literature:

\begin{enumerate}[label=$\bullet$, itemsep = 1mm, topsep = 1mm]
	\item We propose the suboptimality loss as an alternative to the KKT, first-order and predictability losses. The suboptimality loss admits a direct physical interpretation (like the predictability loss) and leads to convex empirical loss minimization problems (like the KKT and first-order losses) whenever the candidate objective functions admit a linear parameterization. In contrast, empirical predictability loss minimzation problems constitute NP-hard bilevel programs even for linear candidate objectives. We also propose the bounded rationality loss, which generalizes the suboptimality loss to situations where the agent is known to select $\delta$-suboptimal decision due to bounded rationality.
	
	\item We leverage the data-driven distributionally robust optimization scheme with Wasserstein balls developed in~\cite{ref:MohKun-14} to regularize empirical inverse optimization problems under imperfect information. As such, the proposed approach offers out-of-sample guarantees for any combination of risk measures and loss functions. In contrast, \cite{bertsimas2014data} develops out-of-sample guarantees only for the value-at-risk of the first-order loss, while~\cite{keshavarz2011imputing} and~\cite{aswani2015inverse} discuss no (finite) out-of-sample guarantees at all.
	
	\item We study the tractability properties of the distributionally robust inverse optimization problem that minimizes the conditional value-at-risk of the suboptimality loss. We prove that this problem is equivalent to a convex program when the search space consists of all linear functions. We also show that this problem admits a safe convex approximation when the search space consists of all convex quadratic functions or all conic combinations of finitely many convex basis functions.
	
	\item We argue that the first-order and suboptimality losses can be used as tractable approximations for the intractable predictability loss, which has desirable statistical consistency properties~\cite{aswani2015inverse} and is the preferred loss function if the observer aims for prediction accuracy. We show that if the candidate objective functions are strongly convex, then the estimators obtained from minimizing the first-order and suboptimality losses admit out-of-sample predictability guarantees. Moreover, the predictability guarantee corresponding to the suboptimality loss is stronger than the one obtained from the first-order loss. Recall that the predictability loss itself cannot be minimized in polynomial time.
	
	\item We show through extensive numerical tests that the proposed distributionally robust approach to inverse optimization attains often better (lower) out-of-sample suboptimality and predictability than the state-of-the art approaches in~\cite{bertsimas2014data} and~\cite{aswani2015inverse}. All of our experiments are reproducible, and the underlying source codes are available at \url{https://github.com/sorooshafiee/InverseOptimization}. 
\end{enumerate}

The rest of the paper develops as follows. In Sections~\ref{sec:prob} and~\ref{sec:imperfect} we formalizes the inverse optimization problem under perfect and imperfect information, respectively. Section~\ref{sec:dro} then introduces the distributionally robust approach to inverse optimization, while Sections~\ref{sec:lin} and~\ref{sec:quad} derive tractable reformulations and safe approximations for distributionally robust inverse optimization problems over search spaces of linear and quadratic candidate objectives, respectively. Numerical results are reported in Section~\ref{sec:sim}.

}


\paragraph{\bf Notation} 
The inner product of two vectors $s,t\in\R^m$ is denoted by $\inner{s}{t} \Let s\tr t$, and the dual of a norm $\|\cdot\|$ on $\R^m$ is defined through $\|t\|_* \Let \sup_{\|s\|\le 1} \inner{t}{s}$. The dual of a proper (closed, solid, pointed) convex cone $\mathcal C\subseteq \R^m$ is defined as $\mathcal C^*\Let\{t\in\R^m:\inner{t}{s}\geq 0\;\forall s\in\mathcal C\}$, and the relation $s\gec{\mathcal C}t$ is interpreted as $s-t\in\mathcal{C}$. Similarly, for two symmetric matrices $Q,R\in\R^{m\times m}$ the relation $Q\succeq R$ ($Q\preceq R$) means that $Q-R$ is positive (negative) semidefinite. The identity matrix is denoted by $\mathbb I$. 
We denote by $\dir{\xi}$ the Dirac distribution concentrating unit mass at $\xi\in\Xi$. The $N$-fold product of a distribution $\PP$ on $\Xi$ is denoted by $\PP^N$, which represents a distribution on the Cartesian product~$\Xi^N$.

\section{Inverse Optimization under Perfect Information}
\label{sec:prob}
{Consider an agent who first receives a random signal $s\in \s  \subseteq \R^m$ and then solves the following parametric optimization problem:
\begin{align}
\label{opt_basic}
\mathop{\text{minimize}}_{x \in \X(s)}~ F(s, x).
\end{align}
Note that both the objective function $F:\R^m \times \R^n \ra \R$ as well as the (multivalued) feasible set mapping $\X:\R^m\rightrightarrows \R^n$ depend on the signal. We assume that the set of minimizers $\X^\star(s)\Let\arg\min_{x\in\X(s)} F(s,x)$ is non-empty for every $s\in \S$. Consider also an independent observer who monitors the signal~$s\in\S$ as well as the agent's optimal response~$x\in\X^\star(s)$. We assume that the observer is ignorant of the agent's preferences encoded by the objective function $F$. Thus, {\em a priori}, the observer cannot predict the agent's response~$x$ to a particular signal~$s$. Throughout the paper we assume that the observed signal-response pairs~$\xi\Let(s,x)$ are governed by some probability distribution $\PP$ supported on $\Xi \Let \big\{(s,x) : s \in \S, ~ x \in \X(s)\big\}$, which can be viewed as the graph of the feasible set mapping~$\X$. Note that the marginal distribution of $s$ under $\mathbb P$ captures the frequency of the exogenous signals, while the conditional distribution of $x$ given $s$ places all probability mass on the argmin set~$\X^\star(s)$. Note that, unless $\X^\star(s)$ is a singleton, the exact conditional distribution of $x$ given $s$ depends on the specific optimization algorithm used by the agent. 

In the following we assume that the observer has access to $N$ independent samples $\xid_i\Let (\sd_i,\xd_i)$ from~$\PP$, which can be used to learn the agent's objective function. As the space of all possible objective functions is vast, the observer seeks to approximate $F$ by some candidate objective function from within a parametric {\em hypothesis space} $\F=\{F_\theta : \theta \in \Theta\}$, where $\Theta$ represents a finite-dimensional parameter set. Ideally, the observer would aim to identify the hypothesis $F_\theta$ closest to $F$, {\em e.g.}, by solving the least squares problem 
\begin{align}
\label{empirical-loss}
\Mini_{\theta \in \Theta} ~ \frac{1}{N} \sum_{i=1}^N \ell_\theta(\sd_i,\xd_i),
\end{align}
where $\ell_\theta(\sd_i,\xd_i)$ denotes the identifiability loss as per the following definition.

\begin{Def}[Identifiability loss] The identifiability loss of model $\theta$ is given by
		\begin{align}
		\label{loss_identifiability} 
		\ell_\theta(s,x) \Let |F(s,x)-F_\theta(s,x)|^2.
		\end{align}
\end{Def}

Unfortunately, the identifiability loss of a training sample cannot be evaluated unless the agent's objective function $F$ is known. Indeed, the observer is blind to the agent's objective values $F(\sd_i,\xd_i)$ and only sees the signals~$\sd_i$ and responses~$\xd_i$. Thus, the identifiability loss cannot be used to learn $F$. It can merely be used to assess the quality of a hypothesis $F_\theta$ obtained with another method in a synthetic experiment where the true objective $F$ is known. As two objective functions have the same minimizers whenever they are related through a strictly monotonically increasing transformation, however, it is indeed fundamentally impossible to learn $F$ from the available training data. At best we can learn the set of its minimizers~$\X^\star(s)$ for every $s$.

If a hypothesis $F_\theta$ is used in lieu of $F$, it can be used to predict the agent's optimal response to a signal $s$ by solving a variant of problem~\eqref{opt_basic}, where $F$ is replaced with $F_\theta$. In the following we define $\X^\star_\theta(s)\Let \arg\min_{y\in\X(s)} F_{\theta}(s,y)$ and refer to any $x\in\X^\star_\theta(s)$ as a response to $s$ predicted by $\theta$. Note that $x\in\X^\star_\theta(s)$ if and only if the response $x$ to $s$ can be explained by model $\theta$. In order to assess the quality of a candidate model $\theta$, the observer should now check whether $\X^\star_\theta(s)\approx\X^\star(s)$ with high probability over $s\in\S$. This can be achieved by solving an empirical loss minimization problem of the form~\eqref{empirical-loss} with a loss function that satisfies $\ell_\theta(s,x) = 0$ if $x\in\X^\star_\theta(s)$ and $\ell_\theta(s,x) > 0$ otherwise. Thus, the loss should vanish if and only if the decision $x$ can be explained as an optimal response to $s$ under model $\theta$. 

In order to learn $\X^\star(s)$, an intuitive approach is to minimize the predictability loss defined below.

\begin{subequations}
\label{loss-examples}
\begin{Def}[Predictability loss] The predictability loss of model $\theta$ is given by
		\begin{align}
		\label{loss_predictability}
		\ell_\theta(s,x) \Let \min_{y\in \X_\theta(s)}~\|x - y \|^2_2.
		\end{align}
		It quantifies the squared Euclidean distance of $x$ from the set of responses to $s$ predicted by $\theta$.
\end{Def}

The predictability loss is known to offer strong statistical consistency guarantees but renders~\eqref{empirical-loss} an NP-hard bilevel optimization problem even if the agent's subproblem is convex~\cite{aswani2015inverse}. Thus, the predictability loss can only be used for low-dimensional problems involving moderate sample sizes. An alternative choice is to minimize the suboptimality loss proposed in this paper, which we will show to be computationally attractive.

\begin{Def}[Suboptimality loss] The suboptimality loss of model $\theta$ is given by
	\label{ell}
		\begin{align}
		\label{loss}
		\ell_\theta(s,x) \Let F_\theta(s,x) - \min_{y \in \X(s)}F_\theta(s,y).
		\end{align}
		It quantifies the suboptimality of $x$ with respect to $F_\theta$ given the signal $s$.
\end{Def}

Another computationally attractive loss function is the degree of violation of the agent's first-order optimality condition~\cite{bertsimas2014data}.

\begin{Def}[First-order loss] If $F_\theta$ is differentiable with respect to $x$, the first-order loss is given by
		\begin{align}
		\label{loss_bertsimas}
		\ell_\theta(s,x) \Let \max_{y \in \X(s) }\inner{\nabla_x F_\theta(s,x)}{x - y}.
		\end{align}
		It quantifies the extent to which $x$ violates the first-order optimality condition of the optimization problem~\eqref{opt_basic} for a given~$s$, where $F$ is replaced with $F_\theta$. Note that the first-order loss vanishes whenever $x$ represents a local minimizer of $F_\theta(s,\cdot)$ over $\X(s)$.
\end{Def}
\end{subequations}

Note that the predictability loss best captures the observer's objective to predict the agent's decisions. However, the suboptimality loss and the first-order loss have better computational properties. Indeed, we will argue below that the learning model~\eqref{empirical-loss} with the loss functions~\eqref{loss} or~\eqref{loss_bertsimas} is computationally tractable under suitable convexity assumptions about the agent's decision problem~\eqref{opt_basic}, the support set $\Xi$ and the hypothesis space $\F$. Thus, we encounter a similar situation as in binary classification, where it is preferable to minimize the convex hinge loss instead of the discontinuous 0-1 loss, which is the actual quantity of interest.

The following proposition establishes basic properties of the loss functions~\eqref{loss-examples}.

\begin{Prop}[Dominance relations between loss functions]
\label{prop:dominance}
Assume that $F_\theta(s,x)$ is convex and differentiable in~$x$, and define $\gamma \geq0$ as the largest number satisfying the inequality
\begin{align}
\label{convex}
 F_\theta(s,y) -  F_\theta(s,x) \ge \inner{\nabla_x F_\theta(s,x)}{y - x} + {\gamma \over 2}\|y-x\|^2 \qquad \forall x,y\in \X(s),~\forall s\in\S.
\end{align}
If $\ell^{\rm p}_\theta$, $\ell^{\rm s}_\theta$ and $\ell^{\rm f}_\theta$ denote the predictability, suboptimality and first-order losses, respectively, then we have
	\begin{align}
		\label{dominance}
		\ell_\theta^{\rm f}(s,x)~ \ge~ \ell^{\rm s}_\theta(s,x) ~\ge~ {\gamma \over 2} \,\ell_\theta^{\rm p}(s,x) \qquad \forall s \in \s,~x \in \X(s).
	\end{align}
	Moreover, all three loss functions are non-negative and evaluate to zero if and only if $x\in \X_\theta(s)$.
\end{Prop}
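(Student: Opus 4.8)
The plan is to prove the chain of inequalities \eqref{dominance} from right to left, and then deal with the ``vanishes iff'' characterization separately. Throughout, fix $s\in\S$ and $x\in\X(s)$, and abbreviate $f(\cdot)\Let F_\theta(s,\cdot)$, which by assumption is convex and differentiable. Let $y^\star\in\X_\theta^\star(s)$ be a minimizer of $f$ over $\X(s)$; such a minimizer exists at least in the linearly/quadratically parameterized cases of interest, and the argument can be run with an infimizing sequence otherwise.

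\textbf{Step 1: $\ell^{\rm s}_\theta(s,x)\ge \tfrac{\gamma}{2}\,\ell^{\rm p}_\theta(s,x)$.} Apply the strong-convexity-type inequality \eqref{convex} with the roles of $x$ and $y$ swapped, i.e. write it as $f(x)-f(y)\ge \inner{\nabla f(y)}{x-y}+\tfrac{\gamma}{2}\|x-y\|^2$, and evaluate at $y=y^\star$. Since $y^\star$ minimizes $f$ over the convex set $\X(s)$, the first-order optimality condition gives $\inner{\nabla f(y^\star)}{x-y^\star}\ge 0$ for all $x\in\X(s)$. Hence $f(x)-f(y^\star)\ge \tfrac{\gamma}{2}\|x-y^\star\|^2$. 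The left-hand side is exactly $\ell^{\rm s}_\theta(s,x)$ by Definition of the suboptimality loss, and $\|x-y^\star\|^2\ge \min_{y\in\X_\theta^\star(s)}\|x-y\|_2^2=\ell^{\rm p}_\theta(s,x)$ (using that the norm in \eqref{convex} can be taken as $\|\cdot\|_2$, or noting it only changes the constant), which yields the claim.

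\textbf{Step 2: $\ell^{\rm f}_\theta(s,x)\ge \ell^{\rm s}_\theta(s,x)$.} By convexity of $f$, the gradient inequality $f(y)\ge f(x)+\inner{\nabla f(x)}{y-x}$ holds for every $y\in\X(s)$; rearranging gives $\inner{\nabla f(x)}{x-y}\ge f(x)-f(y)$. Taking the maximum over $y\in\X(s)$ on the left gives $\ell^{\rm f}_\theta(s,x)=\max_{y\in\X(s)}\inner{\nabla f(x)}{x-y}\ge \max_{y\in\X(s)}\big(f(x)-f(y)\big)=f(x)-\min_{y\in\X(s)}f(y)=\ell^{\rm s}_\theta(s,x)$. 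This step is essentially immediate from the first-order characterization of convexity.

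\textbf{Step 3: non-negativity and the zero-level characterization.} Non-negativity of $\ell^{\rm s}_\theta$ is immediate since $F_\theta(s,x)\ge \min_{y\in\X(s)}F_\theta(s,y)$ whenever $x\in\X(s)$; non-negativity of $\ell^{\rm f}_\theta$ follows by taking $y=x$ in its defining maximum; non-negativity of $\ell^{\rm p}_\theta$ is obvious. Then $\ell^{\rm s}_\theta(s,x)=0$ iff $F_\theta(s,x)=\min_{y\in\X(s)}F_\theta(s,y)$, i.e. iff $x\in\X_\theta^\star(s)$. By the inequality chain, $\ell^{\rm f}_\theta(s,x)=0\Rightarrow \ell^{\rm s}_\theta(s,x)=0\Rightarrow x\in\X_\theta^\star(s)$, and conversely if $x\in\X_\theta^\star(s)$ then $\inner{\nabla f(x)}{x-y}\le 0$ for all $y\in\X(s)$ by optimality, forcing $\ell^{\rm f}_\theta(s,x)\le 0$, hence $=0$; the predictability loss clearly vanishes exactly on $\X_\theta^\star(s)$ as well. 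The one subtlety worth noting explicitly is that the equivalence ``$\ell^{\rm p}_\theta(s,x)=0\Leftrightarrow x\in\X_\theta^\star(s)$'' requires $\X_\theta^\star(s)$ to be closed (so that distance zero implies membership), which holds under the standing convexity/closedness assumptions on $\X(s)$ and $F_\theta$.

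\textbf{Main obstacle.} None of the steps is deep; the only points requiring care are (i) ensuring the minimizer $y^\star$ in the suboptimality loss actually exists so that the first-order optimality condition can be invoked — otherwise one argues via a minimizing sequence and a limiting inequality — and (ii) reconciling the generic norm $\|\cdot\|$ appearing in the strong-convexity inequality \eqref{convex} with the Euclidean norm hard-coded in the predictability loss \eqref{loss_predictability}; if these differ, the constant $\gamma/2$ in \eqref{dominance} should be read up to the norm-equivalence factor, or one simply takes $\|\cdot\|=\|\cdot\|_2$ in \eqref{convex} as the paper evidently intends.
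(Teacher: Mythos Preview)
Your proof is correct and follows essentially the same route as the paper's: the paper obtains $\ell^{\rm f}_\theta\ge\ell^{\rm s}_\theta$ by setting $\gamma=0$ in \eqref{convex} and minimizing over $y$ (your Step~2 rephrased), and obtains $\ell^{\rm s}_\theta\ge\tfrac{\gamma}{2}\ell^{\rm p}_\theta$ by combining \eqref{convex} at a minimizer with the first-order optimality condition and then minimizing over $\X^\star_\theta(s)$ (your Step~1). Your additional remarks on existence of $y^\star$, closedness of $\X^\star_\theta(s)$, and the norm discrepancy are valid technical caveats that the paper glosses over, but they do not change the argument.
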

Note that~\eqref{convex} always holds for $\gamma=0$ due to the first-order condition of convexity \cite[Section~3.1.3]{ref:Boyd}.
\begin{proof}[Proof of Proposition~\ref{prop:dominance}]
Setting $\gamma =0$ and minimizing both sides of~\eqref{convex} over~$y \in \X(s)$ yields $\ell_\theta^{\rm f} (s,x)\ge \ell^{\rm s}_\theta(s,x)$. Next, the first-order optimality condition of the convex program~\eqref{opt_basic} with objective function~$F_\theta$ requires that
\begin{align}
\label{first-opt}
	\inner{\nabla_x F_\theta(s,x)}{y - x} \ge 0 \qquad \forall y \in \X(s)
\end{align}
at any optimal point~$x \in \X^\star_\theta(s)$. Combining the inequalities \eqref{convex} and  \eqref{first-opt} then yields
\begin{align*}
F_\theta(s,y) -  F_\theta(s,x) \ge {\gamma \over 2}\|y-x\|^2 \qquad \forall x \in \X^\star_\theta(s),~y \in \X(s).
\end{align*}	
Minimizing both sides of the above inequality over $x \in \X^\star_\theta(s)$ yields $\ell^{\rm s}_\theta(s,x) \ge {\gamma \over 2}\ell^{\rm p}_\theta(s,x)$. Note that this inequality is only useful for $\gamma>0$, in which case $\X^\star_\theta(s)$ is in fact a singleton. It is straightforward to verify that all loss functions~\eqref{loss-examples} are non-negative and evaluate to zero if and only if $x\in \X^\star_\theta(s)$. In the case of the first-order loss, for instance, this equivalence holds because the first-order condition~\eqref{first-opt} is both necessary and sufficient for the optimality of~$x$. We remark that~\eqref{dominance} remains valid if $\gamma$ depends on $s$ and $\theta$.
\end{proof}

While the basic estimation models in statistical learning all minimize an empirical loss as in~\eqref{empirical-loss}, some inverse optimization models proposed in~\cite{bertsimas2014data} implicitly minimize the worst-case loss across all training samples. To capture both approaches in a unified model, we suggest here to minimize a normalized, positive homogeneous and monotone risk measure $\rho$ that penalizes positive losses. More precisely, we denote by $\rho^\mathbb Q(\ell_\theta)$ the risk of the loss $\ell_\theta(\xi)$ if $\xi=(s,x)$ follows the distribution $\mathbb Q$. The inverse optimization problem~\eqref{empirical-loss} thus generalizes~to
\begin{align}
	\label{saa}
	\mathop{\text{minimize}}_{\theta\in\Theta} ~ \risk^\Pem(\ell_\theta), \qquad\text{where}\qquad \Pem \Let {1 \over N}\sum_{i=1}^N \dir{\xid_i}
\end{align}
represents the {\em empirical distribution} on the training samples. In the remainder we refer to $\risk^\Pem(\ell_\theta)$ as the {\em empirical} or {\em in-sample risk}. Note that~\eqref{saa} reduces indeed to~\eqref{empirical-loss} if we choose the expected value as the risk measure. Two alternative risk measures that could be used in~\eqref{saa} are described in the following example. 

\begin{Ex}[Risk measures] A popular risk measure that the observer could use to quantify the risk of a positive loss is the {\em conditional value-at-risk} ($\cvar$) at level $\alpha\in (0,1]$, which is defined as
\begin{subequations}
	\begin{align}
	\label{cvar}
	\cvar_\alpha^\Q (\ell_\theta) = 
	\inf_{\tau}~\tau + {1 \over \alpha} \EE^\Q\big[\max\{\ell_\theta(s,x) - \tau, 0\} \big] ,
	\end{align}
	see~\cite{ref:Rock&Yry-00}. For $\alpha=1$, the $\cvar$ reduces to the expected value, and for $\alpha\downarrow 0$, it converges to the essential supremum of the loss. Alternatively, the observer could use the {\em value-at-risk} ($\var$) at level $\alpha\in [0,1]$ defined~as
	\begin{align}
	\label{var}
	\var_\alpha^\Q (\ell_\theta) = 
	\inf_{\tau}~\left\{ \tau ~:~\Q\big[ \ell_\theta(s,x)\leq \tau\big] \geq 1-\alpha\right\}.
	\end{align}
	Note that the $\var$ coincides with the upper $(1-\alpha)$-quantile of the loss distribution. Moreover, if $\ell_\theta(s,x)$ has a continuous marginal distribution under $\Q$, then $\cvar_\alpha^\Q (\ell_\theta)$ coincides with the expected loss above $\var^\Q_\alpha(\ell_\theta)$. 
\end{subequations}	
\end{Ex}

By definition, the loss function $\ell_\theta(\xi)$ is non-negative for all $\xi\in\Xi$ and $\theta\in\Theta$. The monotonicity and normalization of the risk measure $\rho$ thus imply that
\[
	\risk^\Pem(\ell_\theta)\geq \risk^\Pem(0) =0\qquad \forall \theta\in\Theta,
\]
which in turn implies that the optimal value of problem~\eqref{saa} is necessarily larger than or equal to zero. Moreover, if the agent's true objective function is contained in~$\F$, that is, if $F=F_{\theta\opt}$ for some $\theta\opt\in \Theta$, then the loss~$\ell_{\theta\opt}(\xid_i)$ vanishes for all~$i$, indicating that the optimal value of~\eqref{saa} is zero and that $\theta\opt$ is optimal in~\eqref{saa}. In this case, the optimal {\em value} of the in-sample risk minimization problem~\eqref{saa} is known {\em a priori}, and the only informative output of any solution scheme is an {\em optimizer}, that is, a model $\theta'\in\Theta$ with zero in-sample risk. If the number of training samples is moderate, then there may be multiple optimal solutions, and $\theta'$ may differ from the agent's true model $\theta^\star$. 

\begin{Rem}[Choice of risk measures]
	\label{rem:risk}
	If $F=F_{\theta\opt}$ for $\theta\opt\in\Theta$, then the minimum of~\eqref{saa} vanishes and is minimized by $\theta\opt$ irrespective of $\risk$. Thus, one might believe that the choice of the risk measure is immaterial for the inverse optimization problem. However, different risk measures may result in different solution sets. For example, if $\risk$ is the $\cvar$ at level $\alpha\in (0,1]$, then $\theta'$ is a minimizer of~\eqref{saa} if and only if $\ell_{\theta'}(\sd_i, \xd_i)=0$ for all $i\le N$. In contrast, if $\risk$ is the $\var$ at level $\alpha\in [0,1]$, then $\theta'$ is a minimizer of~\eqref{saa} if and only if $\ell_{\theta'}(\sd_i, \xd_i)=0$ for a portion of at least $1-\alpha$ of all $N$ training samples. Thus, the use of $\var$ may lead to an inflated solution set. Moreover, the choice of $\risk$ impacts the tractability of~\eqref{saa}; see Sections~\ref{sec:lin} and~\ref{sec:quad}.
\end{Rem} 
}

\section{Inverse Optimization under Imperfect Information}
\label{sec:imperfect}

{ The proposed framework for inverse optimization described in Section~\ref{sec:prob} is predicated on the assumption of perfect information. Specifically, it is assumed that the agent is able to determine and implement the best response $x$ to any given signal $s$ and that the observer can measure $s$ and $x$ precisely. Moreover, it is implicitly assumed that the family $\F$ of candidate objective functions contains the agent's true objective function $F$. In practice, however, the observer may be confronted with the following challenges: 

\begin{enumerate}[label=(\ch{\arabic{*}}), itemsep = 2mm, topsep = 1mm]
	\item[(i)]{\bf Model uncertainty:} The hypothesis space $\F$ chosen by the observer may not be rich enough to contain the agent's true objective function $F$ or any strictly increasing transformation of $F$ that encodes the same preferences.
	
	\item[(ii)] {\bf Measurement noise:} The observed signal-response pairs may be corrupted by noise, which prevents the observer from measuring them exactly. 
		
	\item[(iii)] {\bf Bounded rationality:} The agent may settle for a suboptimal decision $x$ due to cognitive or computational limitations. Even if the best response can be computed exactly, an exact implementation of the desired best response may not be possible due to implementation errors \cite{ref:BenElGhaNem09}.
	
\end{enumerate}

In the presence of {\em model uncertainty}, that is, if neither $F$ nor any strictly increasing transformation of~$F$ is contained in the chosen hypothesis space $\F$, then there exists typically no $\theta\in\Theta$ such that the loss functions described in Section~\ref{sec:prob} vanish on all training samples. In this case a perfect recovery of the agent's preferences is fundamentally impossible, and the optimal value of the empirical risk minimization problem~\eqref{saa} is positive. The best the observer can hope for is to learn the parametric hypothesis that most accurately (but imperfectly) captures the agent's true preferences.

In the presence of {\em measurement noise} the observed training samples~$\xid_i=(\sd_i,\xd_i)$ represent random perturbations of some unobservable pure samples~$\wt\xi_i=(\wt s_i,\wt x_i)$. While $\wt x_i$ is an exact optimal response to an unperturbed signal~$\wt s_i$, the noisy response $\xd_i$ generically constitutes a suboptimal---maybe even infeasible---response to $\sd_i$. We will henceforth assume that the noisy samples $\xid_i$ are mutually independent and follow an {\em in-sample distribution} $\PP_{\rm in}$, while the corresponding unperturbed samples $\xid_i$ are governed by an {\em out-of-sample distribution} $\PP_{\rm out}$. While the distribution $\PP_{\rm out}$ of the perfect samples is supported on $\Xi$ by construction, the in-sample distribution~$\PP_{\rm in}$ of the noisy samples may or may not be supported on $\Xi$. If the noisy samples can materialize outside of~$\Xi$, then we call the noise {\em inconsistent}. If all noisy samples are guaranteed to reside within $\Xi$, on the other hand, then we call the noise {\em consistent}. Thus, in the presence of measurement noise, the observer faces the challenging task to learn $\PP_{\rm out}$ from samples of $\PP_{\rm in}$. Note that in the absence of noise we have $\PP_{\rm in}=\PP_{\rm out}$, which coincides with the distribution $\PP$ introduced in Section~\ref{sec:prob}.

Agents suffering from \emph{bounded rationality} may not be able to solve~\eqref{opt_basic} to global optimality. Such agents may respond to a given signal~$s$ with a $\delta$-suboptimal solution~$x_\delta$, that is, a decision $x_\delta\in\X(s)$ with $F(s, x_\delta)\leq \min_{x\in\X(s)}F(s, x)+\delta$. Here, the parameter $\delta\geq 0$ quantifies the agent's irrationality. Indeed, if $\delta>0$, the agent accepts a cost increase of up to $\delta$ for the freedom of choosing a $\delta$-suboptimal decision, which may require fewer cognitive or computational resources than finding a global minimizer. Note that the observer may mistakenly perceive the effects of bounded rationality as (consistent) measurement noise or vice versa. So one might argue that there is no need to distinguish between these two types of imperfect information. However, bounded rationality is fundamentally different from measurement noise if the observer {\em knows} that the imperfect measurements are caused by bounded rationality. If the imperfections originate from measurement noise, then the observer aims to filter out the noise in order to predict the agent's pure decisions. If the imperfections originate from bounded rationality, on the other hand, then the observer aims to predict the agent's $\delta$-suboptimal decisions and not the ideal global optimizers. In other words, the observer always aims to predict the agent's responses bar measurement noise, irrespective of whether these responses are rational or not. An observer who knows the agent's degree of irrationality $\delta$ may thus improve the predictive power of her learning model by replacing the suboptimality loss~\eqref{loss} with the bounded rationality loss.

\begin{Def}[Bounded rationality loss] The bounded rationality loss of model $\theta$ is given by
		\begin{align}
		\label{bounded-ratioinality-loss}
		\ell_\theta(\delta; s,x) \Let \max\big\{F_\theta(s,x) - \min_{y \in \X(s)} F_\theta(s,y)-\delta,0\big\}.
		\end{align}
		It quantifies the amount by which the suboptimality of $x$ with respect to $F_\theta$ given signal $s$ exceeds $\delta\geq 0$. By construction, $\ell_\theta(\delta;s,x)=0$ whenever $x$ is a $\delta$-supoptimal response to the signal~$s$, and $\ell_\theta(s,x)>0$ otherwise. 
\end{Def}

Note that the bounded rationality loss with $\delta=0$ reduces to the usual suboptimality loss~\eqref{loss}. Even if it is known that the agent suffers from bounded rationality, the constant $\delta$ is unlikely to be available in practice. However, as long as there are no other sources of imperfect information (such as model uncertainty or measurement noise), the observer can jointly estimate $\delta$ and $\theta$ from the training data by solving
\begin{equation}
	\label{eq:inverse-opt-bounded-rationality}
	\mathop{\rm minimize}_{\theta\in\Theta, \,\delta\geq 0} \left\{ \delta : \ell_\theta(\delta; \sd_i,\xd_i)=0\; \forall i\le N\right\}.
\end{equation}
This variant of the inverse optimization problem identifies the smallest bounded rationality constant $\delta$ that explains all observed responses as $\delta$-suboptimal decisions under model $\theta$. Note that \eqref{eq:inverse-opt-bounded-rationality} constitutes a convex optimization problem as long as $\Theta$ is convex and the hypotheses $F_\theta(s,x)$ are affinely parameterized in $\theta$, which guarantees that $\ell_\theta(\delta; s,x)$ is jointly convex in $\theta$ and $\delta$ for any fixed $s$ and $x$. Moreover, instead of solving \eqref{eq:inverse-opt-bounded-rationality}, one could equivalently solve the empirical risk minimization problem~\eqref{saa} with the suboptimality loss and the essential supremum risk measure to find $\theta$ and then set $\delta$ to the resulting optimal objective value.

To some extent, all of the complications discussed in this section are inevitable in any real application. In fact, they are likely to reflect the norm rather than the exception. Thus, the main objective of this paper is to develop inverse optimization models that can cope with imperfect information in a principled manner.}

\section{Distributionally Robust Inverse Optimization}
\label{sec:dro}

{By combining different loss functions with different risk measures, one may synthesize different empirical risk minimization problems of the form~\eqref{saa}. By construction, any solution of \eqref{saa} has minimum in-sample risk. However, the in-sample risk merely captures historical performance and is therefore of little practical interest. Instead, the observer seeks models that display promising performance on unseen future data. 

\begin{Def}[Out-of-sample risk]
	\label{def:oos}
	We refer to $\risk^{\PP_{\rm out}}(\ell_{\theta})$ as the out-of-sample risk of the model $\theta\in\Theta$. 
\end{Def}

Ideally, the observer would want to minimize the out-of-sample risk over all candidate models $\theta\in\Theta$. This is impossible, however, because the out-of-sample distribution $\PP_{\rm out}$ of the signal-response pairs is unknown, and only a finite set of training samples from $\PP_{\rm in}$ is available (recall that $\PP_{\rm in}=\PP_{\rm out}$ if measurements are perfect). In this situation, the observer has to settle for a data-driven solution $\widehat\theta_N\in\Theta$ that depends on the training samples and attains---hopefully---a low out-of-sample risk. We emphasize that, due to its dependence on the training samples, $\widehat\theta_N$ constitutes a random object, whose stochastics is governed by the product distribution~$\PP_{\rm in}^N$. A simple data-driven solution is obtained, for instance, by solving the empirical risk minimization problem~\eqref{saa}.

While it is impossible to minimize the out-of-sample risk on the basis of the training samples, it is sometimes possible to establish data-driven out-of-sample guarantees in the sense of the following definition.

\begin{Def}[Out-of-sample guarantee]
	\label{def:cert}
	We say that a data-driven solution $\widehat \theta_N$ enjoys an out-of-sample performance guarantee at significance level $\beta\in[0,1]$ if there exists a data-driven certificate $\widehat J_N$ with
\begin{align}
	\label{oos-guarantee}
	\PP_{\rm in}^N \big[\risk^{\PP_{\rm out}}(\ell_{\widehat \theta_N})\leq \widehat \cert_N \big] \geq 1-\beta.
\end{align}
\end{Def}

Note that the probability in \eqref{oos-guarantee} is evaluated with respect to the distribution of $N$ independent (potentially noisy) training samples, which impact both the data-driven solution $\wh\theta_N$ and the certificate $\wh\cert_N$. Note also that the certificate $\wh\cert_N$ can be viewed as an upper $(1-\beta)$-confidence bound on the out-of-sample risk of $\widehat \theta_N$. Thus, we sometimes refer to the confidence level $1-\beta$ as the certificate's {\em reliability}.

As the ideal goal to minimize the out-of-sample risk is unachievable, the observer might settle for the more modest goal to determine a data-driven solution that admits a low certificate with a high reliablity. We will now argue that this secondary goal is achievable by adopting a distributionally robust approach. Specifically, we will use the $N$ training samples to design an ambiguity set $\amb_N$ that contains the out-of-sample distribution $\PP_{\rm out}$ of the (perfect) signal-response pairs with confidence~$1-\beta$. Next, we construct the data-driven solution $\wh \theta_N$ and the corresponding certificate $\wh \cert_N$ by minimizing the worst-case risk across all models $\theta\in\Theta$, where the worst case is taken with respect to all signal-response distributions in the ambiguity set $\amb_N$, that is, we set
\begin{align}
\label{DRO-inv-opt}
\wh \theta_N \in \argmin_{\theta \in \Theta} ~\sup_{\Q \in \amb_N} ~\risk^{\Q}(\ell_\theta) \qquad\text{and}\qquad 
\wh J_N \Let \min_{\theta \in \Theta}  \sup_{\Q \in \amb_N} ~\risk^{\Q}(\ell_\theta).
\end{align}
It is clear that if $\PP_{\rm in}^N[\PP_{\rm out}\in\amb_N ]\geq 1-\beta$, then the distributionally robust solution $\wh\theta_N$ and the corresponding certificate $\wh\cert_N$ defined above satisfy the out-of-sample guarantee~\eqref{oos-guarantee}. In order to ensure that $\amb_N$ contains the unknown out-of-sample distribution $\PP_{\rm out}$ with confidence $1-\beta$, we construct the ambiguity set $\amb_N$ as a ball in the space of probability distributions with respect to the Wasserstein metric as suggested in~\cite{ref:MohKun-14}.

\begin{Def}[Wasserstein metric]
	\label{def:wass}
	For any integer $p\geq 1$ and closed set $\Xi\subset\R^{n+m}$ we let $\M^p(\Xi)$ be the space of all probability distributions $\Q$ supported on $\Xi$ with $\EE^\Q\big[\|\xi\|^p\big] = \int_\Xi \|\xi\|^p \,\Q(\diff\xi)<\infty$. The $p$-Wasserstein distance between two distributions $\Q_1,\Q_2\in \M^p(\R^{n+m})$ is defined as
	\begin{align*}
	\Wass{p}{\Q_1}{\Q_2} \Let \inf \left\{ \left( \int \| \xi_1 -  \xi_2 \|^p\,  \Pi(\diff \xi_1, \diff \xi_2)\right)^{1/p} : 
	\begin{array}{l}\mbox{$\Pi$ is a joint distribution of $\xi_1$ and $\xi_2$} \\ 
	\mbox{with marginals $\Q_1$ and $\Q_2$, respectively}\! \end{array}\right\}. 
	\end{align*}
\end{Def}

The Wasserstein distance $\Wass{p}{\Q_1}{\Q_2}$ can be viewed as the ($p$-th root of the) minimum cost for moving the distribution $\Q_1$ to $\Q_2$, where the cost of moving a unit mass from $\xi_1$ to $\xi_2$ amounts to~$\| \xi_1 -  \xi_2 \|^p$. The joint distribution $\Pi$ of $\xi_1$ and $\xi_2$ is therefore naturally interpreted as a mass transportation plan. 

We define the ambiguity set as a $p$-Wasserstein ball in $\M^p(\Xi)$ centered at the empirical distribution $\Pem$ defined in \eqref{saa}. Specifically, we define the $p$-Wasserstein ball of radius $\eps$ around $\Pem$ as
\begin{align*}
	\ball{p}{\Pem}{\eps} \Let \left\{ \Q \in \M^p(\Xi) ~:~ \Wass{p}{\Q}{\Pem} \le \eps\right \}.
\end{align*}
Note that if $\eps=0$ and the empirical distribution is supported on $\Xi$, which is necessarily true in the absence of measurement noise, then the Wasserstein ball $\ball{p}{\Pem}{\eps}$ shrinks to the singleton set that contains only the empirical distribution. In this case, the distributionally robust inverse optimization problem~\eqref{DRO-inv-opt} reduces to the empirical risk minimization problem~\eqref{saa}. In order to establish out-of-sample guarantees, we must assume that the $p$-Wasserstein distance between $\PP_{\rm in}$ and $\PP_{\rm out}$ is bounded by a known constant~$\eps_0\geq 0$. This is the case, for instance, if the noise is additive and all noise realizations are bounded by~$\eps_0$ with certainty.

\begin{Prop}[Out-of-sample guarantee]
	\label{prop:out-of-sample} 
	Assume that there exists $a>1$ with $A\Let\EE^{\PP_{\rm in}}[\exp(\|\xi\|^a)]<\infty$. Assume also that $\beta \in (0,1)$ is a prescribed significance level, $\Wass{p}{\PP_{\rm in}}{\PP_{\rm out}}\le \eps_0$, and $m+n \neq 2p$.\footnote{Proposition~\ref{prop:out-of-sample} readily extends to the case $n+m = 2p$ at the expense of additional notation by leveraging \cite[Theorem~2]{ref:FouGui-14}.} Then, there exist constants $c_1,c_2>0$ that depend only on $a$, $A$, $m$ and $n$ such that $\PP_{\rm in}^N[\PP_{\rm out}\in \ball{p}{\Pem}{\eps}]\geq 1-\beta$ whenever $\eps\geq \eps_0+\eps_N(\beta)$, where
	\begin{align}
	\label{eps_N}
	\eps_N(\beta) \Let \left\{ \begin{array}{ll}  \Big({\log (c_1\beta^{-1}) \over c_2N} \Big)^{\min\big\{p(m+n)^{-1} ,\frac{1}{2}\big\}} & \text{if } N \ge {\log(c_1\beta^{-1}) \over c_2}, \vspace{1mm} \\
	\Big({\log (c_1\beta^{-1}) \over c_2N} \Big) & \text{if } N < {\log(c_1\beta^{-1}) \over c_2}.		\end{array}\right.
	\end{align}  
\end{Prop}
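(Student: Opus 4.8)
The plan is to reduce the statement to a known measure concentration result for the Wasserstein distance between an empirical distribution and its generating measure, and then combine this with the triangle inequality for the Wasserstein metric. The key observation is that $\Pem$ is the empirical distribution associated with $N$ i.i.d.\ samples from $\PP_{\rm in}$, not from $\PP_{\rm out}$. Hence I would first control $\Wass{p}{\Pem}{\PP_{\rm in}}$ via concentration, and then separately bound $\Wass{p}{\PP_{\rm in}}{\PP_{\rm out}}$ by the hypothesis $\eps_0$.

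\medskip

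\textbf{Step 1 (Concentration for the empirical distribution).} Invoke the measure concentration result of Fournier and Guillin \cite[Theorem~2]{ref:FouGui-14}: under the exponential moment condition $\EE^{\PP_{\rm in}}[\exp(\|\xi\|^a)] = A < \infty$ with $a>1$, there exist constants $c_1,c_2>0$ depending only on $a$, $A$, and the dimension $d\Let m+n$ such that, for all $\eps' > 0$ below a suitable threshold,
\[
\PP_{\rm in}^N\big[\Wass{p}{\Pem}{\PP_{\rm in}} > \eps'\big] \le c_1 \exp\!\big(-c_2 N (\eps')^{\max\{d/p,\,2\}}\big).
\]
Here the case distinction $d \neq 2p$ (equivalently $d/p \neq 2$) is exactly what allows one to avoid the logarithmic correction term that appears in \cite[Theorem~2]{ref:FouGui-14} when $d = 2p$; this is the role of the footnote. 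Setting the right-hand side equal to $\beta$ and solving for $\eps'$ yields precisely the function $\eps_N(\beta)$ in \eqref{eps_N}: the exponent $\min\{p(m+n)^{-1},\tfrac12\}$ is the reciprocal of $\max\{d/p,2\}$, and the two branches correspond to whether $\eps_N(\beta)$ falls below or above the threshold at which the tail bound is active (the regime $N < \log(c_1\beta^{-1})/c_2$ being the trivial one where the bound is vacuous unless $\eps_N$ is taken linear in $\log(c_1\beta^{-1})/(c_2N)$). Consequently $\PP_{\rm in}^N[\Wass{p}{\Pem}{\PP_{\rm in}} \le \eps_N(\beta)] \ge 1-\beta$.

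\medskip

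\textbf{Step 2 (Triangle inequality).} On the event $\{\Wass{p}{\Pem}{\PP_{\rm in}} \le \eps_N(\beta)\}$, apply the triangle inequality for the $p$-Wasserstein metric together with the standing assumption $\Wass{p}{\PP_{\rm in}}{\PP_{\rm out}} \le \eps_0$ to conclude
\[
\Wass{p}{\Pem}{\PP_{\rm out}} \le \Wass{p}{\Pem}{\PP_{\rm in}} + \Wass{p}{\PP_{\rm in}}{\PP_{\rm out}} \le \eps_N(\beta) + \eps_0.
\]
Hence, whenever $\eps \ge \eps_0 + \eps_N(\beta)$, this event is contained in $\{\PP_{\rm out} \in \ball{p}{\Pem}{\eps}\}$, which gives $\PP_{\rm in}^N[\PP_{\rm out} \in \ball{p}{\Pem}{\eps}] \ge 1-\beta$ as claimed. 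One should also note $\PP_{\rm out} \in \M^p(\Xi)$, so that membership in the ball is well-defined: this holds because $\PP_{\rm out}$ is supported on $\Xi$ by construction and has finite $p$-th moment, e.g.\ since $\Wass{p}{\PP_{\rm in}}{\PP_{\rm out}}<\infty$ and $\PP_{\rm in}$ has finite $p$-th moment by the exponential moment hypothesis.

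\medskip

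\textbf{Main obstacle.} The substantive content is entirely in Step 1, which is not reproved here but cited; the only real work is the bookkeeping to verify that the constants and exponents extracted from \cite[Theorem~2]{ref:FouGui-14} match the closed form \eqref{eps_N}, in particular keeping track of the two regimes and confirming that the dimension-dependent rate $p/(m+n)$ (rather than $1/2$) governs the high-dimensional case $m+n > 2p$. Step 2 is a one-line application of the triangle inequality and is routine. The delicate point worth flagging is purely that the samples are drawn from $\PP_{\rm in}$ while the ambiguity set must capture $\PP_{\rm out}$ — this is handled cleanly by the additive $\eps_0$ budget, and is the reason the radius in the hypothesis is $\eps_0 + \eps_N(\beta)$ rather than $\eps_N(\beta)$ alone.
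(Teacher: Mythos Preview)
Your proposal is correct and follows essentially the same approach as the paper: invoke the Fournier--Guillin concentration result (the paper cites it via \cite[Theorem~3.5]{ref:MohKun-14}) to bound $\Wass{p}{\Pem}{\PP_{\rm in}}$ by $\eps_N(\beta)$ with confidence $1-\beta$, then apply the triangle inequality together with the hypothesis $\Wass{p}{\PP_{\rm in}}{\PP_{\rm out}}\le\eps_0$. Your write-up is somewhat more detailed about how the explicit form of $\eps_N(\beta)$ arises from inverting the tail bound, but the structure of the argument is identical.
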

\begin{proof} 
	Select any $\eps\geq \eps_0+\eps_N(\beta)$. Then, the triangle inequality implies
	\begin{align*}
		\Wass{p}{\PP_{\rm out}}{\Pem} \leq \Wass{p}{\PP_{\rm out}}{\PP_{\rm in}} +  \Wass{p}{\PP_{\rm in}}{\Pem} .
	\end{align*}
	By assumption, the first term on the right hand side is bounded by $\eps_0$ with certainty. Theorem~3.5 in~\cite{ref:MohKun-14}, which leverages a powerful measure concentration result developed in \cite[Theorem~2]{ref:FouGui-14}, further guarantees that the second term is bounded above by $\eps_N(\beta)$ with confidence $1-\beta$. As $\PP_{\rm out}$ is supported on $\Xi$, we may thus conclude that $\PP_{\rm out}\in \ball{p}{\Pem}{\eps}$ with probability $1-\beta$. This observation completes the proof. 
\end{proof}

Proposition~\ref{prop:out-of-sample} ensures that the distributionally robust solution $\wh\theta_N$ and the corresponding certificate $\wh\cert_N$ induced by a Wasserstein ambiguity set of radius $\eps\geq \eps_0+\eps_N(\beta)$ satisfy the out-of-sample guarantee~\eqref{oos-guarantee}. One can further show that if there is no measurement noise ($\PP_{\rm in}=\PP_{\rm out}=\PP$) while $\beta_N\in(0,1)$ for $N\in\mathbb N$ satisfies $\sum_{N=1}^\infty \beta_N<\infty$ and $\lim_{N\rightarrow\infty} \eps_N(\beta_N)=0$,\footnote{A possible choice is $\beta_N=\exp(-\sqrt{N})$.} then any accumulation point of $\{\wh\theta_N\}_{N\in\mathbb N}$ is $\PP^\infty$-almost surely a minimizer of the the out-of-sample risk $\risk^{\PP}(\ell_\theta)$ over $\theta\in\Theta$; see~\cite[Theorem~3.6]{ref:MohKun-14}.

Besides offering rigorous out-of-sample and asymptotic guarantees, the proposed approach to distributionally robust inverse optimization can be shown to be tractable if the search space contains only linear or quadratic hypotheses, and risk is measured by the $\cvar$ of the suboptimality loss~\eqref{loss} or the first-order loss~\eqref{loss_bertsimas}. Unfortunately, tractability is lost when minimizing the predictability loss~\eqref{loss_predictability}, which is the actual quantitiy of interest. However, the computable distributionally robust solutions $(\wh\theta_N^{\rm s},\wh\cert_N^{\rm s})$ and $(\wh\theta_N^{\rm f},\wh\cert_N^{\rm f})$ corresponding to the suboptimality and first-order losses, respectively, can be used to construct out-of-sample guarantees for the predictability loss if the hypotheses are uniformly strongly convex with parameter~$\gamma>0$. Indeed, if $\ell^{\rm p}_\theta$, $\ell^{\rm s}_\theta$ and $\ell^{\rm f}_\theta$ denote the predictability, suboptimality and first-order losses, respectively, we have
\begin{subequations}
\begin{align}
	\label{oos-predictability-guarantee1}
	\PP_{\rm in}^N \left[\risk^{\PP_{\rm out}}(\ell^{\rm s}_{\widehat \theta^{\rm s}_N})\leq \widehat \cert_N^{\rm s} \right] \geq 1-\beta \qquad\implies \qquad
	\PP_{\rm in}^N \left[\risk^{\PP_{\rm out}}(\ell^{\rm p}_{\widehat \theta^{\rm s}_N})\leq \frac{2}{\gamma}\widehat \cert^{\rm s}_N \right] \geq 1-\beta
\end{align}
and
\begin{align}
	\label{oos-predictability-guarantee2}
	\PP_{\rm in}^N \left[\risk^{\PP_{\rm out}}(\ell^{\rm f}_{\widehat \theta^{\rm f}_N})\leq \widehat \cert^{\rm f}_N \right] \geq 1-\beta \qquad\implies \qquad
	\PP_{\rm in}^N \left[\risk^{\PP_{\rm out}}(\ell^{\rm p}_{\widehat \theta^{\rm f}_N})\leq \frac{2}{\gamma}\widehat \cert^{\rm f}_N \right] \geq 1-\beta,
\end{align}
\end{subequations}
where both implications follow from the dominance relation~\eqref{dominance} established in Proposition~\ref{prop:dominance} and the scale invariance and monotonicity of the $\cvar$. Thus, both $\widehat \theta^{\rm s}_N$ and $\widehat \theta^{\rm f}_N$ are efficiently computable and offer an out-of-sample guarantee for the predictability loss. While both guarantees involve the same confidence level $1-\beta$, however, the underlying certificates $\cert^{\rm s}_N$ and $\cert^{\rm f}_N$ are generically different. One can again use the dominance relation~\eqref{dominance} from Proposition~\ref{prop:dominance} and the monotonicity of the $\cvar$ to show that $\cert^{\rm s}_N\le \cert^{\rm f}_N$. Thus, minimizing the suboptimality loss results in a weakly stronger predictability guarantee. This reasoning suggests that the observer should favor the suboptimality loss~\eqref{loss} over the first-order loss~\eqref{loss_bertsimas}. We emphasize that unlike the suboptimality loss, however, the first-order loss leads to tractable inverse optimization models even in the presence of {\em several} strategically interacting agents~\cite{bertsimas2014data}.

\begin{Rem}[Out-of-sample guarantees in~\cite{bertsimas2014data}]
The out-of-sample guarantees provided in~\cite{bertsimas2014data} only apply to the $\var$, and an extension to other risk measures is not envisaged. Specifically, \cite[Theorem~6]{bertsimas2014data} provides an out-of-sample guarantee for the $\var$ of the first-order loss, while \cite[Theorem~6]{bertsimas2014data} offers an out-of-sample guarantee for the $\var$ of the predictability loss. In contrast, the distributionally robust approach discussed here offers out-of-sample guarantees for any normalized, positive homogeneous and monotone risk measure including the $\var$ or any coherent risk measure such as the $\cvar$ etc. 
\end{Rem}
}

\section{Linear Hypotheses}
\label{sec:lin}
On the one hand, the {hypothesis space $\F$} should be rich enough to contain the agent's unknown true objective function $F$. On the other hand, $\F$ should be small enough to ensure tractability of the distributionally robust inverse optimization problem~\eqref{DRO-inv-opt} and to prevent degeneracy of its optimal solutions. A particular class $\F$ that strikes this delicate balance and proves useful in many applications is the family of linear objective functions $F_\theta(s,x) \Let \inner{\theta}{x}$. {The corresponding search space $\Theta\subseteq \mathbb R^n$ may account for prior information on the agent's objective and should not contain $\theta=0$, which corresponds to a trivial constant objective function that renders every response optimal. Examples of tractable search spaces are listed below.

\begin{Ex}[Tractable search spaces]
	\label{Ex:Theta}
	If there is no prior information on $F$, it is natural to set
	\begin{subequations}
	\label{eq:search-spaces-linear}
	\begin{align}
	\label{eq:search-space}
		\Theta \Let \big\{\theta \in \R^n : \|\theta\|_\infty = 1 \big\}. 
	\end{align}
	Note that the normalization $\|\theta\|_\infty = 1$ is non-restrictive because the objective functions corresponding to $\theta$ and $\kappa\,\theta$ imply the same preferences for any model $\theta\neq 0$ and scaling factor $\kappa>0$. In fact, we could define~$\Theta$ as the unit sphere induced by any norm on $\R^n$. However, the  $\infty$-norm stands out from a computational perspective. While all norm spheres are non-convex and therefore {\em a priori} unattractive as search spaces, the $\infty$-norm sphere decomposes into $2n$ polytopes---one for each facet. This polyhedral decomposition property allows us to optimize efficiently over $\Theta$. 
	
	If $F$ is known to be non-decreasing in the agent's decisions, a natural choice is
	\begin{align}
	\label{eq:search-space2}
		\Theta\Let\{\theta \in \R^n: \|\theta\|_1=1,\theta\geq 0\}.
	\end{align}
	This search space has been used in \cite{keshavarz2011imputing} and constitutes a single convex polytope. 
	
	If $F$ is believed to reside in the vicinity of a nominal objective function $\inner{\theta_0}{x}$ as in \cite{ahuja2001inverse}, then we may set  
	\begin{align}
	\label{eq:search-space3}
		\Theta \Let \big\{\theta \in \R^n : \|\theta - \theta_0\| \le \Gamma \big\},
	\end{align}
	\end{subequations}	
	where $\|\cdot\|$ denotes a generic norm, and $\Gamma$ reflects the degree of uncertainty about the nominal model~$\theta_0$. 
\end{Ex}
}

{When focusing on linear hypotheses, the {suboptimality} loss function~\eqref{loss} reduces to
\begin{align}
\label{loss-lin}
	F_\theta(s,x)-\min_{y\in\X(s)} F_\theta(s,y) = \inner{\theta}{x}-\min_{y\in\X(s)}\inner{\theta}{y} = \max_{y\in\X(s)} \inner{\theta}{x-y}= \max_{y\in\X(s)} \inner{\nabla_x F_\theta(s,x)}{x-y}
\end{align}
and thus equals the first-order loss~\eqref{loss_bertsimas}, }which is positive homogeneous and subadditive in $\theta$. The tractability results to be established below rely on the following assumption. 

{
\begin{As}[Conic representable support]
	\label{As:conic}
	The signal space~$\S$ and the feasible set~$\X(s)$ are conic representable, that is,
	\begin{align*}
	\mathbb S = \big\{s \in \R^m :  Cs  \gec{\coneXi} d \big\}\qquad\text{and} \qquad \X(s)  = \{x \in \R^n : Wx \gec{\coneX} Hs+ h \}\quad \forall s\in \mathbb S, 
	\end{align*}
	where the relations `$\gec{\coneXi}$' and `$\gec{\coneX}$' represent conic inequalities with respect to some proper convex cones $\coneXi$ and $\coneX$ of appropriate dimensions, respectively. The set $\Xi$ of all possible signal-response pairs thus reduces to
	\begin{align*}
	\Xi = \big\{(s,x) \in \R^m\times\R^n:  Cs  \gec{\coneXi} d,~ Wx \gec{\coneX} Hs+ h\big\}.
	\end{align*}
	We also assume that the convex set $\Xi$ admits a Slater point.
\end{As}

Under Assumption~\ref{As:conic}, the suboptimality loss $\ell_\theta(s,x)$ is concave in $(s,x)$ for every fixed $\theta$, see, {\em e.g.},~\cite[Section~3.2.5]{ref:Boyd}. We are now ready to state our first tractability result for the class of linear hypotheses}.

\begin{Thm}[Linear {hypotheses} and suboptimality loss]
	\label{thm:lin}
	Assume that $\F$ represents the class of linear {hypotheses with a search space of the form~\eqref{eq:search-spaces-linear} and that Assumption~\ref{As:conic} holds.} If the observer uses the suboptimality loss~\eqref{loss} and measures risk using the $\cvar$ at level $\alpha\in(0,1]$, then  the distributionally robust inverse optimization problem~\eqref{DRO-inv-opt} over the $1$-Wasserstein ball is equivalent to the finite conic program\footnote{Strictly speaking, if $\Theta$ is an $\infty$-norm ball of the form~\eqref{eq:search-space}, then \eqref{cvar-lin} can be viewed as a family of $2n$ finite conic programs because $\Theta$ is non-convex but decomposes into $2n$ convex polytopes.}
	\begin{align}
	\label{cvar-lin}
	\begin{array}{llll} \text{\em minimize} & \displaystyle \tau+\frac{1}{\alpha}\left({\eps} \lambda  + {1 \over N}\sum\limits_{i = 1}^{N} r_i\right) \vspace{1mm}\\
	\text{\em subject to} & 	\theta\in\Theta,\;\;	\lambda \in\mathbb R_+,\;\;\tau,r_i\in\R,\;\; \phi_{i1},\phi_{i2}\in \coneXi^*, \;\;\mu_{i1},\mu_{i2},\gamma_{i} \in \coneX^* &\forall i \le N\\ 
	& \inner{C\sd_i-d}{\phi_{i1}} + \inner{W\xd_i-H\sd_i-h}{\mu_{i1}+\gamma_{i}} \leq r_i + \tau&\forall i \le N \vspace{1mm}\\	
	& \inner{C\sd_i-d}{\phi_{i2}} + \inner{W\xd_i-H\sd_i-h}{\mu_{i2}} \leq r_i,\;\; \theta=W\tr\gamma_{i}&\forall i\le N\vspace{1mm}\\
	& \left\|\begin{pmatrix}C\tr\phi_{i1} - H\tr(\mu_{i1} + \gamma_{i}) \\ W\tr(\mu_{i1} + \gamma_{i})\end{pmatrix} \right\|_* \le \lambda,\;\; \left\|\begin{pmatrix}C\tr\phi_{i2} - H\tr\mu_{i2} \\ W\tr\mu_{i2}\end{pmatrix} \right\|_* \le \lambda&\forall i \le N.
	\end{array}
	\end{align}
\end{Thm}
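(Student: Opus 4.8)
The plan is to work from the outside in, reformulating the distributionally robust inverse optimization problem~\eqref{DRO-inv-opt} layer by layer until only the finite conic program~\eqref{cvar-lin} remains. First I would substitute the definition of $\cvar_\alpha$ from~\eqref{cvar} for the risk measure, so that the objective becomes $\min_{\theta\in\Theta}\min_{\tau\in\R}\,\tau+\tfrac1\alpha\sup_{\Q\in\ball{1}{\Pem}{\eps}}\EE^\Q[\max\{\ell_\theta(s,x)-\tau,0\}]$, where by~\eqref{loss-lin} the suboptimality loss is $\ell_\theta(s,x)=\max_{y\in\X(s)}\inner{\theta}{x-y}$. The inner worst-case expectation over the $1$-Wasserstein ball is the object of the main technical work: I would invoke the strong-duality result for worst-case expectations over Wasserstein balls (Theorem~4.2 in~\cite{ref:MohKun-14}), which applies because the integrand $\max\{\ell_\theta(s,x)-\tau,0\}$ is upper semicontinuous in $\xi=(s,x)$ and $\Xi$ is closed and convex. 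This turns the infinite-dimensional inner supremum into the finite-dimensional dual $\inf_{\lambda\ge0}\;\eps\lambda+\tfrac1N\sum_{i=1}^N\sup_{\xi\in\Xi}\big(\max\{\ell_\theta(s,x)-\tau,0\}-\lambda\|\xi-\xid_i\|\big)$.

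Next I would handle the pointwise supremum over $\xi\in\Xi$ for each sample $i$. The $\max\{\cdot,0\}$ splits the supremum into two cases: one where the term $\ell_\theta(s,x)-\tau$ is active and one where the constant $0$ is active. Writing $r_i$ for the value of the $i$-th supremum, each branch becomes a constraint $r_i\ge \sup_{\xi\in\Xi}\big(\ell_\theta(s,x)-\tau-\lambda\|\xi-\xid_i\|\big)$ and $r_i\ge \sup_{\xi\in\Xi}\big(-\lambda\|\xi-\xid_i\|\big)$; the latter is just $r_i\ge0$, but in fact the more useful packaging — and the one that produces the symmetric structure in~\eqref{cvar-lin} — keeps both suprema explicit. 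In each branch I would substitute $\ell_\theta(s,x)=\max_{y\in\X(s)}\inner{\theta}{x-y}$, introduce the dual norm via $-\lambda\|\xi-\xid_i\|=\min_{\|z\|_*\le\lambda}\inner{z}{\xi-\xid_i}$, and then the supremum over $\xi\in\Xi$ of a concave function is a concave maximization over the conic-representable set $\Xi$ from Assumption~\ref{As:conic}. At this point conic duality (using the Slater point assumed in Assumption~\ref{As:conic}, which guarantees strong duality and attainment) converts each of these embedded maximizations into a minimization over conic multipliers: $\phi_{i1},\phi_{i2}\in\coneXi^*$ for the constraints $Cs\gec{\coneXi}d$, multipliers $\mu_{i1},\mu_{i2}\in\coneX^*$ for $Wx\gec{\coneX}Hs+h$, and an additional multiplier $\gamma_i\in\coneX^*$ arising from dualizing the inner maximization $\max_{y\in\X(s)}\inner{\theta}{x-y}$ in the first branch (which forces the coupling $\theta=W\tr\gamma_i$). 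Collecting the stationarity conditions in $\xi$ of these Lagrangians produces exactly the dual-norm constraints $\big\|(C\tr\phi_{i1}-H\tr(\mu_{i1}+\gamma_i),\,W\tr(\mu_{i1}+\gamma_i))\big\|_*\le\lambda$ and its $\phi_{i2},\mu_{i2}$ analogue, while the residual constant terms give the inequalities $\inner{C\sd_i-d}{\phi_{i1}}+\inner{W\xd_i-H\sd_i-h}{\mu_{i1}+\gamma_i}\le r_i+\tau$ and $\inner{C\sd_i-d}{\phi_{i2}}+\inner{W\xd_i-H\sd_i-h}{\mu_{i2}}\le r_i$.

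Finally I would assemble these pieces: the outer $\min$ over $\tau\in\R$ and $\theta\in\Theta$, the $\inf$ over $\lambda\ge0$ and the per-sample multipliers, and the constraints just derived, yield precisely~\eqref{cvar-lin}; all of the minimizations collapse into a single joint minimization because none of the nested problems interact adversarially once duality has been applied. The only subtlety to flag is the search space: when $\Theta$ is the nonconvex $\infty$-norm sphere~\eqref{eq:search-space}, the argument above is run separately on each of the $2n$ facets (on each of which $\Theta$ restricts to a polytope), which is why the footnote describes~\eqref{cvar-lin} as a family of $2n$ conic programs; for the convex search spaces~\eqref{eq:search-space2}–\eqref{eq:search-space3} it is a single conic program.

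I expect the main obstacle to be the careful bookkeeping in the conic-duality step: one must verify that the suprema over $\xi\in\Xi$ are genuinely concave maximizations (which rests on the linearity of $F_\theta$ together with Assumption~\ref{As:conic}, so that $\ell_\theta$ is jointly concave in $(s,x)$), that the Slater condition propagates to the relevant subproblems so that strong duality holds with no duality gap and the dual infima are attained, and that the two branches of the $\max\{\cdot,0\}$ are dualized consistently so the multipliers line up into the symmetric pair of constraint blocks shown in~\eqref{cvar-lin}. The Wasserstein strong-duality invocation itself is essentially a black box from~\cite{ref:MohKun-14}, so the real work is the inner finite-dimensional manipulation and keeping track of which dual cone each multiplier lives in.
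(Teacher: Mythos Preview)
Your overall approach is essentially the same as the paper's: expand the $\cvar$, apply the Wasserstein strong-duality result from~\cite{ref:MohKun-14} to obtain a semi-infinite dual with one supremum per sample and per branch of the $\max\{\cdot,0\}$, and then dualize each of those conic suprema using the Slater point in Assumption~\ref{As:conic}. The multipliers you identify ($\phi_{i1},\phi_{i2},\mu_{i1},\mu_{i2},\gamma_i$ with $\theta=W\tr\gamma_i$) and the resulting constraints match the paper's exactly.

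There is one genuine step you have glossed over. When you ``substitute the definition of $\cvar_\alpha$'' and write the objective as
\[
\min_{\theta\in\Theta}\min_{\tau\in\R}\;\tau+\tfrac1\alpha\sup_{\Q\in\ball{1}{\Pem}{\eps}}\EE^\Q\big[\max\{\ell_\theta(s,x)-\tau,0\}\big],
\]
you have silently interchanged $\sup_{\Q}$ and $\inf_{\tau}$: the definition~\eqref{cvar} places the $\inf_\tau$ \emph{inside} the worst-case $\cvar$, so the original objective is $\sup_{\Q}\inf_{\tau}(\cdots)$, not $\inf_{\tau}\sup_{\Q}(\cdots)$. This swap is not automatic and is precisely what the paper justifies via Sion's minimax theorem, using that the Wasserstein ball $\ball{1}{\Pem}{\eps}$ is weakly compact. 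Without this step you only get the easy inequality in one direction, which would establish that~\eqref{cvar-lin} is an upper bound rather than an exact reformulation. You should make this interchange explicit and justify it.
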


{We emphasize that Theorem~\ref{thm:lin} remains valid if the training samples are inconsistent with the given support information, that is, if $(\sd_i, \xd_i)\notin\Xi$ for some $i\le N$, in which case $\ell_\theta(\sd_i, \xd_i)$ can even be negative.}

\begin{proof}[Proof of Theorem~\ref{thm:lin}]
	By the definition of $\cvar$, the objective function of~\eqref{DRO-inv-opt} can be expressed as
	\begin{align}
	\label{wc_cvar}
	\sup_{\Q \in \ball{1}{\Pem}{\eps}} \risk^{\Q}(\ell_\theta) & = \sup_{\Q \in \ball{1}{\Pem}{\eps}}  \inf_{\tau} \tau + {1 \over \alpha} \EE^\Q\big[\max\{\ell_\theta(s,x) - \tau, 0\} \big] \\
	& = \inf_{\tau} \tau + {1 \over \alpha} \sup_{\Q \in \ball{1}{\Pem}{\eps}}  \EE^\Q\big[\max\{\ell_\theta(s,x) - \tau, 0\} \big]. \nonumber
	\end{align}
	The interchange of the maximization over $\Q$ and the minimization over $\tau$ in the second line is justified by Sion's minimax theorem~\cite{ref:Sion-58}, {which applies because the Wasserstein ball $\ball{1}{\Pem}{\eps}$ is weakly compact \cite[p.\ 2298]{ref:Bois-11}.}
	The subordinate worst-case expectation problem in the second line of~\eqref{wc_cvar} constitutes a semi-infinite linear program. As the corresponding integrand is given by the maximum of $\ell_\theta(s,x)-\tau$ and $0$, both of which can be viewed as proper concave functions in $(s,x)$, this worst-case expectation problem admits a strong dual semi-infinite linear program of the form
	\begin{align}
	\label{semiinf-linear}
	\begin{array}{clll} \Inf{{\lambda\geq 0,r_i}} &  {\eps} \lambda  + {1 \over N}\sum\limits_{i = 1}^{N} r_i \vspace{1mm}\\
	\st &  \Sup{(s,x)\in\Xi} \Sup{y \in \X(s)} \inner{\theta}{x - y} -\tau-\lambda\|(s,x)-(\sd_i,\xd_i)\|\leq r_i&\forall i\leq N  \vspace{1mm}\\
	&  \Sup{(s,x)\in\Xi} -\lambda\|(s,x)-(\sd_i,\xd_i)\|\leq r_i&\forall i\leq N,
	\end{array}
	\end{align}
	see Theorem~4.2 in~\cite{ref:MohKun-14} for a detailed derivation of~\eqref{semiinf-linear} for more general integrands. By the definitions of $\X(s)$ and $\Xi$ put forth in Assumption~\ref{As:conic}, respectively, the $i$-th member of the first constraint group in~\eqref{semiinf-linear} holds if and only if the optimal value of the conic program 
	\begin{align*}
	&\begin{array}{clll} \Sup{{s,x,y}} & \inner{\theta}{x - y} -\tau-\lambda\|(s,x)-(\sd_i,\xd_i)\|\vspace{1mm}\\
	\st &   Cs \gec{\coneXi} d,\; Wx \gec{\coneX} Hs+ h,\; Wy \gec{\coneX} Hs + h
	\end{array}
	\end{align*}
	is smaller or equal to $r_i$. The dual of this conic program is given by
	\begin{align*}
	&\begin{array}{clll} \Inf{} &  \inner{C\sd_i-d}{\phi_{i1}} + \inner{W\xd_i-H\sd_i-h}{\mu_{i1}+\gamma_{i}} -\tau \vspace{1mm}\\
	\st & \phi_{i1} \in \coneXi^*, \; \mu_{i1}, \gamma_i \in \coneX^* \vspace{1mm} \\
	& \left\|\begin{pmatrix}C\tr\phi_{i1} - H\tr(\mu_{i1} + \gamma_i) \\ W\tr(\mu_{i1} + \gamma_i)\end{pmatrix} \right\|_* \le \lambda,\quad\theta=W\tr\gamma_{i} ,
	\end{array}
	\end{align*}
	and strong duality holds because the uncertainty set $\Xi$ {contains a Slater point due to Assumption~\ref{As:conic}}. Thus, the $i$-th member of the first constraint group in~\eqref{semiinf-linear} holds if and only if there exist $\phi_{i1} \in \coneXi^*$ and $\mu_{i1}, \gamma_{i} \in \coneX^*$ such that  $\theta=W\tr\gamma_i$, 
	\begin{align*}
	\begin{array}{clll} &  \inner{C\sd_i-d}{\phi_{i1}} + \inner{W\xd_i-H\sd_i-h}{\mu_{i1}+\gamma_{i}} \leq r_i + \tau\;\text{and}\; \left\|\begin{pmatrix}C\tr\phi_{i1} - H\tr(\mu_{i1} + \gamma_{i}) \\ W\tr(\mu_{i1} + \gamma_i)\end{pmatrix} \right\|_* \le \lambda.
	\end{array}
	\end{align*}
	A similar reasoning shows that the $i$-th member of the second constraint group in~\eqref{semiinf-linear} holds if and only if there exist $\phi_{i2} \in \coneXi^*$ and  $\mu_{i2}\in \coneX^*$  such that
	\begin{align*}
	&\begin{array}{clll} &  \inner{C\sd_i-d}{\phi_{i2}} + \inner{W\xd_i-H\sd_i-h}{\mu_{i2}} \leq r_i \quad \text{and} \quad \left\|\begin{pmatrix}C\tr\phi_{i2} - H\tr\mu_{i2} \\ W\tr\mu_{i2}\end{pmatrix} \right\|_* \le \lambda.
	\end{array}
	\end{align*}
	In summary, the worst-case expectation in the second line of~\eqref{wc_cvar} thus coincides with the optimal value of the finite conic program
	\begin{align*}
	\begin{array}{clll} \Inf{} &  {\eps} \lambda  + {1 \over N}\sum\limits_{i = 1}^{N} r_i \vspace{1mm}\\
	\st & 	\lambda \in\mathbb R_+,\;r_i\in\R,\; \phi_{i1},\phi_{i2}\in \coneXi^*, \;\mu_{i1},\mu_{i2},\gamma_{i}\in \coneX^* &\forall i \le N \\
	& \inner{C\sd_i-d}{\phi_{i1}} + \inner{W\xd_i-H\sd_i-h}{\mu_{i1}+\gamma_i} \leq r_i + \tau&\forall i \le N \vspace{1mm}\\	
	& \inner{C\sd_i-d}{\phi_{i2}} + \inner{W\xd_i-H\sd_i-h}{\mu_{i2}} \leq r_i,\;\; \theta=W\tr\gamma_i&\forall i\le N\vspace{1mm}\\
	& \left\|\begin{pmatrix}C\tr\phi_{i1} - H\tr(\mu_{i1} + \gamma_{i}) \\ W\tr(\mu_{i1} + \gamma_i)\end{pmatrix} \right\|_* \le \lambda,\;\; \left\|\begin{pmatrix}C\tr\phi_{i2} - H\tr\mu_{i2} \\ W\tr\mu_{i2}\end{pmatrix} \right\|_* \le \lambda&\forall i \le N \vspace{1mm}. 
	\end{array}
	\end{align*}
	The claim then follows by substituting this conic program into~\eqref{wc_cvar}. 
\end{proof}

{If $(\sd_i, \xd_i)\in\Xi$ for all $i\le N$, then the conic program~\eqref{cvar-lin} simplifies. In this case, the maximization problems in the last constraint group of~\eqref{semiinf-linear} all evaluate to zero, which implies that $\inner{C\sd_i-d}{\phi_{i2}} + \inner{W\xd_i-H\sd_i-h}{\mu_{i2}} \leq r_i$ reduces to $r_i\ge 0$, while the decision variables $\phi_{i2}$ and $\mu_{i2}$ as well as the constraints 
\[
	\left\|\begin{pmatrix}C\tr\phi_{i2} - H\tr\mu_{i2} \\ W\tr\mu_{i2}\end{pmatrix} \right\|_* \le \lambda
\]
can be omitted from \eqref{cvar-lin} for all $i\le N$.}

For stress test experiments it is often desirable to know the extremal distribution that achieves the worst-case risk in~\eqref{DRO-inv-opt}. The following theorem shows that this extremal distribution can be constructed systematically for any fixed $\theta\in\Theta$ by solving a finite convex optimization problem akin to~\eqref{cvar-lin}.

\begin{Thm}[Worst-case distribution for linear {hypotheses}]
	\label{thm:wc-dist}
	Under the assumptions of Theorem~\ref{thm:lin}, the worst-case risk in~\eqref{cvar-lin} corresponding to a fixed~$\theta\in\Theta$ coincides with the optimal value of a finite convex program, {\em i.e.},
	\begin{align}
	\label{wc-dist-lin}
	\begin{array}{lllll} \displaystyle \sup_{\Q\in \ball{p}{\Pem}{\eps}} \cvar_\alpha^\Q (\ell_\theta) ~= &\text{\rm max} & \displaystyle \frac{1}{\alpha N}\sum_{i=1}^N  ~\pi_{i1}\ell_\theta\left( {p_{i1} \over\pi_{i1}}, {q_{i1} \over\pi_{i1}}\right) \vspace{1mm}\\
	& \text{\rm s.t.} &  \pi_{ij} \in\R_+,\;\; p_{ij} \in\R^m,\;\; q_{ij} \in\R^n  &\forall i \le N,\; j\le 2\\[1mm]
	&&{p_{ij} \over\pi_{ij}}\in\mathbb S,\quad {q_{ij}\over\pi_{ij}}\in\X({p_{ij} \over\pi_{ij}}) &\forall i \le N,\; j\le 2\\[1mm]
	&& \displaystyle\pi_{i1}+\pi_{i2}=1, \quad \frac{1}{N}\sum_{i=1}^N\pi_{i1}=\alpha&\forall i\le N\\[1mm]
	&& \displaystyle \frac{1}{N}\sum_{i=1}^N \sum_{j=1}^2~\pi_{ij} \left\|\left(\begin{array}{c}\frac{p_{ij}}{\pi_{ij}}-\sd_i\\\frac{q_{ij}}{\pi_{ij}}-\xd_i\end{array}\right)\right\|
	\leq \eps.
	\end{array}
	\end{align}
	For any optimal solution $\{\pi\opt_{ij}, p\opt_{ij}, q\opt_{ij}\}$ of this convex program, the discrete distribution
	\begin{align*}
	\Q^\star\Let\frac{1}{N}\sum_{i=1}^{N} \sum_{j=1}^2 ~ \pi\opt_{ij}\delta_{\xi\opt_{ij}} \quad \text{ with }\quad \xi\opt_{ij}\Let\left({p\opt_{ij} \over\pi\opt_{ij}},{q\opt_{ij} \over\pi\opt_{ij}}\right)\tr
	\end{align*}
	belongs to the Wassertein ball $\ball{1}{\Pem}{\eps}$ and attains the supremum on the left hand side of~\eqref{wc-dist-lin}.
\end{Thm}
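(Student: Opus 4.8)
The plan is to derive the convex reformulation of the worst-case $\cvar$ by revisiting the dual semi-infinite linear program that already appeared in the proof of Theorem~\ref{thm:lin}. Recall from~\eqref{semiinf-linear} that the inner supremum $\sup_{\Q\in\ball{1}{\Pem}{\eps}}\EE^\Q[\max\{\ell_\theta-\tau,0\}]$ admits a strong dual which, after dualizing the conic constraints, becomes the finite conic program displayed at the end of that proof. The key observation is that problem~\eqref{wc-dist-lin} is precisely the \emph{primal} of that same worst-case expectation problem, written out after the minimax interchange and the optimization over $\tau$ has been carried out. Concretely, the plan is: (i) start from $\sup_{\Q\in\ball{1}{\Pem}{\eps}}\cvar_\alpha^\Q(\ell_\theta)=\inf_\tau \tau+\tfrac1\alpha\sup_{\Q}\EE^\Q[\max\{\ell_\theta(s,x)-\tau,0\}]$ as in~\eqref{wc_cvar}; (ii) represent the worst-case expectation as an infinite-dimensional transportation problem over joint distributions $\Pi$ of $(\xi,\xid_i)$, i.e. a maximization over nonnegative measures $\{\Q_i\}_{i\le N}$ with $\tfrac1N\sum_i \Q_i(\Xi)=1$ and $\tfrac1N\sum_i \int\|\xi-\xid_i\|\,\Q_i(\diff\xi)\le\eps$, of $\tfrac1N\sum_i\int\max\{\ell_\theta(\xi)-\tau,0\}\,\Q_i(\diff\xi)$.

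The next step is to linearize the $\max\{\cdot,0\}$: split each $\Q_i$ into two nonnegative pieces $\Q_{i1}$ (the mass on which $\ell_\theta-\tau\ge 0$ is active) and $\Q_{i2}$ (the mass assigned the value $0$), so that the objective becomes $\tfrac1N\sum_i \int (\ell_\theta(\xi)-\tau)\,\Q_{i1}(\diff\xi)$. Because $\ell_\theta$ is concave on the convex set $\Xi$ (Assumption~\ref{As:conic}, as noted before Theorem~\ref{thm:lin}), Jensen's inequality guarantees that, for each fixed total mass $\pi_{ij}=\Q_{ij}(\Xi)$ and fixed barycenter, replacing $\Q_{ij}$ by a Dirac at its barycenter $\xi_{ij}=(p_{ij}/\pi_{ij},q_{ij}/\pi_{ij})$ weakly increases the objective $\pi_{ij}\,\ell_\theta(\xi_{ij})$ while keeping the transportation-cost constraint feasible (the map $\pi\|(\cdot/\pi)-\xid_i\|$ is jointly convex, so the cost can only decrease under this contraction). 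This reduces the infinite-dimensional program to the finitely-parameterized one in~\eqref{wc-dist-lin}, with the $2N$ atoms $\xi_{ij}$, masses $\pi_{ij}$, the simplex-type constraints $\pi_{i1}+\pi_{i2}=1$, $\tfrac1N\sum_i\pi_{i1}=\alpha$ (the latter encoding the optimal choice of $\tau$ in the $\cvar$ representation, namely that exactly an $\alpha$-fraction of mass lies in the ``active'' region), and the aggregate Wasserstein budget $\le\eps$. The variable change $q=\pi x$, $p=\pi s$ makes the perspective terms $\pi_{ij}\ell_\theta(p_{ij}/\pi_{ij},q_{ij}/\pi_{ij})$ and the norm terms jointly convex, so the resulting program is convex; the ``$\le\eps$'' can be taken tight at optimality.

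To close the loop one shows equality of optimal values: weak duality between~\eqref{wc-dist-lin} and the dual conic program in the proof of Theorem~\ref{thm:lin} is immediate, and strong duality follows from the same Slater-point argument (Assumption~\ref{As:conic} gives a Slater point of $\Xi$, and one checks the transportation budget constraint is strictly feasible, e.g. by placing all mass at the $\xid_i$ when $\eps>0$, or handling $\eps=0$ separately where the ball is the singleton $\{\Pem\}$). Finally, given an optimizer $\{\pi^\star_{ij},p^\star_{ij},q^\star_{ij}\}$, the discrete distribution $\Q^\star$ is feasible in $\ball{1}{\Pem}{\eps}$ because its transportation cost to $\Pem$ is bounded by the budget constraint, and the simplex constraints make $\Q^\star$ a genuine probability measure on $\Xi$; plugging $\Q^\star$ back into $\cvar_\alpha^{\Q^\star}(\ell_\theta)$ with the threshold $\tau^\star$ recovers the optimal value, so the supremum is attained. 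The main obstacle I anticipate is the Jensen/perspective reduction step: one must argue rigorously that optimizing over arbitrary measures $\Q_{ij}$ is \emph{without loss of generality} equivalent to optimizing over Diracs, which needs concavity of $\ell_\theta$ together with convexity of the cost, plus a measurable-selection/compactness argument to ensure the reduced supremum is still attained — the rest is bookkeeping with conic duality that mirrors the earlier proof.
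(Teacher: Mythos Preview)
Your overall strategy—exploiting the concavity of $\ell_\theta$ on $\Xi$ to reduce the infinite-dimensional transportation problem to a finitely-parameterized one over Diracs, and identifying~\eqref{wc-dist-lin} as the primal of the conic dual from Theorem~\ref{thm:lin}—is essentially the route taken in the paper, which simply cites~\cite[Theorem~4.5]{ref:MohKun-14} for this reduction and~\cite[Corollary~4.7]{ref:MohKun-14} for membership of $\Q^\star$ in the Wasserstein ball.

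There is, however, a genuine gap in your attainment argument. You write that ``plugging $\Q^\star$ back into $\cvar_\alpha^{\Q^\star}(\ell_\theta)$ with the threshold $\tau^\star$ recovers the optimal value.'' But there is no $\tau^\star$ available: you eliminated $\tau$ when you imposed the equality constraint $\tfrac{1}{N}\sum_i\pi_{i1}=\alpha$. Even if you retrieved a $\tau^\star$ from complementary slackness, plugging it into the $\cvar$ formula gives
\[
\tau^\star+\frac{1}{\alpha N}\sum_{i=1}^N\sum_{j=1}^2\pi^\star_{ij}\max\big\{\ell_\theta(\xi^\star_{ij})-\tau^\star,0\big\},
\]
which is an \emph{upper} bound on $\cvar_\alpha^{\Q^\star}(\ell_\theta)$ (since $\cvar$ is an infimum over $\tau$), whereas what you need is the \emph{lower} bound $\cvar_\alpha^{\Q^\star}(\ell_\theta)\ge \tfrac{1}{\alpha N}\sum_i\pi^\star_{i1}\ell_\theta(\xi^\star_{i1})$. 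There is no automatic reason the $j=2$ atoms contribute zero to the max.

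The paper closes this gap by invoking the linear-programming dual of the $\cvar$ definition at the fixed discrete distribution $\Q^\star$:
\[
\cvar_\alpha^{\Q^\star}(\ell_\theta)=\sup_{0\le\nu_{ij}\le\pi^\star_{ij}}\left\{\frac{1}{\alpha N}\sum_{i,j}\nu_{ij}\,\ell_\theta(\xi^\star_{ij})\;:\;\frac{1}{N}\sum_{i,j}\nu_{ij}=\alpha\right\},
\]
and then observing that the choice $\nu_{i1}=\pi^\star_{i1}$, $\nu_{i2}=0$ is feasible precisely because of the constraint $\tfrac{1}{N}\sum_i\pi^\star_{i1}=\alpha$ in~\eqref{wc-dist-lin}. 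This immediately yields the required lower bound and hence attainment. You should replace your final sentence with this argument.
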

\begin{proof}
	As the loss function~\eqref{loss-lin} is proper and jointly concave in $x$ and $s$, we can use a similar reasoning as in~\cite[Theorem~4.5]{ref:MohKun-14} to show that the convex program on the right hand side of~\eqref{wc-dist-lin} coincides with the strong dual of~\eqref{cvar-lin} for any fixed~$\theta\in\Theta$. 
	If this convex program is solvable and $\{\pi\opt_{ij}, p\opt_{ij}, q\opt_{ij}\}$ is a maximizer, then $\Q\opt\in \ball{1}{\Pem}{\eps}$ due to~\cite[Corollary~4.7]{ref:MohKun-14}. It remains to be shown that $\cvar_\alpha^{\Q^\star}(\ell_\theta)$ is no smaller than~\eqref{cvar-lin}. Indeed, by the definition of $\cvar$ we have
	\begin{align*}
	\cvar_\alpha^{\Q^\star}(\ell_\theta)&=\inf_{\tau\in\R}\;\tau +     \frac{1}{\alpha N}\sum_{i=1}^N\sum_{j=1}^2\pi\opt_{ij}\max\left\{\ell_\theta\left( {p\opt_{ij} \over\pi\opt_{ij}}, {q\opt_{ij} \over\pi\opt_{ij}}\right)-\tau,0\right\}\\
	&=\sup_{0\le\nu_{ij}\le\pi\opt_{ij}}\left\{\frac{1}{\alpha N}\sum_{i=1}^N\sum_{j=1}^2\nu_{ij}\ell_\theta\left( {p\opt_{ij} \over\pi\opt_{ij}}, {q\opt_{ij} \over\pi\opt_{ij}}\right)~:~\alpha =\frac{1}{N}\sum_{i=1}^N\sum_{j=1}^2\nu_{ij}\right\}\\
	&\geq \frac{1}{\alpha N}\sum_{i=1}^N \pi\opt_{i1}\ell_\theta\left( {p\opt_{i1} \over\pi\opt_{i1}}, {q\opt_{i1} \over\pi\opt_{i1}}\right)=    \sup_{\Q\in \ball{p}{\Pem}{\eps}} \cvar_\alpha^\Q (\ell_\theta).
	\end{align*}
	Here, the second equality follows from strong linear programming duality, while the inequality follows from the feasibility of the solution $\nu_{i1}=\pi\opt_{i1}$ and $\nu_{i2}=0$ for $i\le N$.
\end{proof} 

{We close this section by generalizing Theorem~\ref{thm:lin} to the bounded rationality loss~\eqref{bounded-ratioinality-loss}.  The proof largely parallels that of Theorem~\ref{thm:lin} and is therefore omitted for brevity.}

\begin{Cor}[Linear {hypotheses} and bounded rationality loss]
	\label{cor:lin:bdd-r}
	Assume that $\F$ represents the class of linear {hypotheses with search space of the form~\eqref{eq:search-spaces-linear} and that Assumptions~\ref{As:conic} holds.} If the observer uses the bounded rationality loss~\eqref{bounded-ratioinality-loss} and measures risk using the $\cvar$ at level $\alpha\in(0,1]$, then the inverse optimization problem~\eqref{DRO-inv-opt} over the $1$-Wasserstein ball is equivalent to a variant of the conic program~\eqref{cvar-lin} with an additional decision variable $\tau\in\R_+$ and where the first constraint group is replaced with
	\[
	\inner{C\sd_i-d}{\phi_{i1}} + \inner{W\xd_i-H\sd_i-h}{\mu_{i1}+\gamma_i} \leq r_i + \tau+\delta\quad \forall i \le N.
	\]
\end{Cor}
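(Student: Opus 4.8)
}

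The plan is to retrace the proof of Theorem~\ref{thm:lin} and isolate exactly where the suboptimality loss enters, then substitute the bounded rationality loss in its place. The starting point is the $\cvar$ representation~\eqref{wc_cvar}, which holds verbatim here: the objective of~\eqref{DRO-inv-opt} equals $\inf_\tau \tau + \tfrac{1}{\alpha}\sup_{\Q\in\ball{1}{\Pem}{\eps}} \EE^\Q[\max\{\ell_\theta(\delta;s,x)-\tau,0\}]$, with Sion's minimax theorem again justified by weak compactness of the Wasserstein ball. The key observation is that with linear hypotheses $F_\theta(s,x)=\inner{\theta}{x}$ we have, by~\eqref{loss-lin} and the definition~\eqref{bounded-ratioinality-loss},
\[
	\ell_\theta(\delta;s,x) = \max\Big\{\max_{y\in\X(s)}\inner{\theta}{x-y}-\delta,\,0\Big\},
\]
so the integrand becomes $\max\{\ell_\theta(\delta;s,x)-\tau,0\}=\max\{\max_{y\in\X(s)}\inner{\theta}{x-y}-\delta-\tau,\ -\tau,\ 0\}$. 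Restricting to $\tau\ge 0$ (which is without loss because the inner max is over a set containing $0$, so $\ell_\theta(\delta;\cdot)\ge 0$ and the optimal $\tau$ in the $\cvar$ formula is nonnegative), the term $-\tau\le 0$ is dominated by the final~$0$, and we are left with the pointwise maximum of the two proper concave functions $\max_{y\in\X(s)}\inner{\theta}{x-y}-(\tau+\delta)$ and $0$ in $(s,x)$.

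This is structurally identical to the integrand analyzed in Theorem~\ref{thm:lin}, the only change being that the threshold $\tau$ has been replaced by $\tau+\delta$ in the first of the two concave pieces. I would then invoke the strong duality for the worst-case expectation of a maximum of concave functions---Theorem~4.2 in~\cite{ref:MohKun-14}---to obtain the semi-infinite dual~\eqref{semiinf-linear} with $\tau$ replaced by $\tau+\delta$ in the first constraint group and unchanged in the second. Dualizing the two embedded conic maximization problems over $\Xi$ (respectively the set defining the first constraint) exactly as in the proof of Theorem~\ref{thm:lin}, using the Slater point guaranteed by Assumption~\ref{As:conic}, reproduces the conic reformulation~\eqref{cvar-lin} with the single modification that the first constraint group reads
\[
	\inner{C\sd_i-d}{\phi_{i1}} + \inner{W\xd_i-H\sd_i-h}{\mu_{i1}+\gamma_i} \leq r_i + \tau+\delta\quad \forall i \le N,
\]
and with $\tau$ now constrained to $\R_+$. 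Substituting back into the $\cvar$ expression yields the claimed program.

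The main obstacle, such as it is, lies in justifying the reduction to $\tau\ge 0$ and the elimination of the spurious $-\tau$ branch cleanly, and in verifying that the three-way maximum genuinely collapses to the two-way maximum that the machinery of~\cite{ref:MohKun-14} ingests; everything after that is a mechanical transcription of the Theorem~\ref{thm:lin} argument with $\tau\rightsquigarrow\tau+\delta$. One should also note that, exactly as in Theorem~\ref{thm:lin}, no consistency of the samples with $\Xi$ is needed, and that setting $\delta=0$ recovers~\eqref{cvar-lin} (the extra variable $\tau\in\R_+$ being harmless since the optimal $\tau$ was already nonnegative). This is why the authors deem the proof safe to omit.
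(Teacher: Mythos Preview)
Your proposal is correct and follows precisely the approach the paper has in mind: the authors explicitly state that the proof ``largely parallels that of Theorem~\ref{thm:lin},'' and your argument does exactly this---rerunning the proof with the shifted threshold $\tau\rightsquigarrow\tau+\delta$ in the first concave piece after collapsing the three-way maximum via the (justified) restriction $\tau\ge 0$. The only point worth noting is that your careful handling of the spurious $-\tau$ branch and the nonnegativity of the optimal $\tau$ makes explicit what the paper leaves implicit.
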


\section{Quadratic {Hypotheses}}
\label{sec:quad}
Optimization problems with quadratic objectives abound in control~\cite{ref:AndMoo07}, statistics~\cite{friedman2001elements}, finance~\cite{markowitz1968portfolio} and many other application domains. Algorithms for inverse optimization that can learn quadratic objective functions from signal-response pairs are therefore of great practical interest. This motivates us to consider the class $\F$ of quadratic {hypotheses} of the form $F_\theta(s,x)\Let  \inner{x}{Q_{xx}x} + \inner{x}{Q_{x s}s}+\inner{q}{x}$, which are encoded by a parameter $\theta\Let(Q_{xx},Q_{x s}, q)$. {The corresponding search space should account for prior information on the agent's objective and should exclude $\theta=0$. Examples of tractable search spaces are listed below.

\begin{Ex}[Tractable search spaces]
	\label{Ex:Theta-quad}
	If $F$ is only known to be strongly convex in $x$, it is natural to set
	\begin{subequations}
	\label{eq:search-spaces-quadratic}
        \begin{align}
        \label{Theta-quad}
        		\Theta \Let \bigg\{ \theta = (Q_{xx},Q_{xs},q)\in \R^{n\times n}\times\R^{n\times m}\times \R^n : Q_{xx} \gesdp \mathbb I \bigg\}.
        \end{align}	
        Note that the normalization $Q_{xx} \gesdp \mathbb I$ is non-restrictive because a positive scaling of the objective function does not alter the agent's preferences. 
        
        If $F$ is only known to be bilinear in $s$ and $x$, it is natural to set
        \begin{align}
        \label{Theta-bilinear}
        		\Theta \Let \bigg\{ \theta = (Q_{xx},Q_{xs},q)\in \R^{n\times n}\times\R^{n\times m}\times \R^n : Q_{xx} \gesdp 0 ,~Q_{xs} = \mathbb I\bigg\},
        \end{align}	
        where the normalization $Q_{xs} = \mathbb I$ can always be enforced by redefining $s$ if necessary.
        
        If $F$ is close to a nominal objective function $\inner{x}{Q^0_{xx}x} + \inner{x}{Q^0_{x s}s}+\inner{q^0}{x}$, then we may set  
        \begin{align}
        \label{Theta-nominal}
        		\Theta \Let \bigg\{ \theta = (Q_{xx},Q_{xs},q)\in \R^{n\times n}\times\R^{n\times m}\times \R^n : Q_{xx} \gesdp 0 ,~ \| \theta-\theta_0\|\leq \Gamma\bigg\},
        \end{align}	
	where $\|\cdot\|$ denotes a generic norm, and $\Gamma$ captures the uncertainty of the nominal model~$\theta_0=(Q^0_{xx},Q^0_{xs},q^0)$. 
        \end{subequations}
\end{Ex}
}

When focusing on quadratic candidate objective functions, the {suboptimality loss}~\eqref{loss} reduces to
\begin{align}
\label{loss-quad}
\begin{array}{ll}
\ell_\theta(s,x) &\displaystyle = \inner{x}{Q_{xx}x + Q_{x s}s+q} - \min_{y \in \X(s)} \inner{y}{Q_{xx}y + Q_{x s}s+q} \\ & \displaystyle = \max_{y \in \X(s)} \inner{x}{Q_{xx}x + Q_{x s}s+q} - \inner{y}{Q_{xx}y + Q_{x s}s+q}.
\end{array}
\end{align}
As in Section~\ref{sec:lin}, we suppose that Assumption~\ref{As:conic} holds. In this setting the agent's decision problem~\eqref{opt_basic} constitutes a conic program and is therefore tractable for common choices of the cones $\coneXi$ and $\coneX$. In contrast, the inverse optimization problem~\eqref{DRO-inv-opt} is hard. In fact, it is already hard to evaluate the objective function of~\eqref{DRO-inv-opt} for a fixed $\theta$. As we work with quadratic objectives, throughout this section we use the $2$-norm on the signal-response space and the $2$-Wasserstein metric to measure distances of distributions.

\begin{Thm}[Intractability of~\eqref{DRO-inv-opt} for quadratic {hypotheses}]
	\label{thm:Quad_NP_hard_1}
	Assume that $\F$ represents the class of quadratic hypotheses with search space \eqref{Theta-quad} and that Assumption~\ref{As:conic} holds. If the observer uses the suboptimality loss~\eqref{loss} and measures risk using the $\cvar$ at level $\alpha\in(0,1]$, then evaluating the objective function of~\eqref{DRO-inv-opt} for a fixed $\theta\in\Theta$ is NP-hard even if $N=1$ (there is only one data point), $\alpha=1$ (the observer is risk-neutral), $\mathbb S$ is a singleton, $\X(s)$ is a polytope independent of $s$, and $Q_{xs}=0$.
	\end{Thm}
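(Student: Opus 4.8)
The plan is to exhibit, for an arbitrary graph, a polynomial-size instance of the problem ``evaluate the objective of~\eqref{DRO-inv-opt} for a fixed $\theta$'' whose value encodes the graph's maximum cut. First I would strip the objective down. Fix $\theta=(Q_{xx},0,q)$ with $Q_{xx}\gesdp\mathbb I$. Under $\alpha=1$ the $\cvar$ in~\eqref{DRO-inv-opt} coincides with the expectation, and under $N=1$ the ambiguity set is $\ball{2}{\Pem}{\eps}$ with $\Pem=\dir{\xid_1}$, so the $2$-Wasserstein constraint on $\Q$ reads $\EE^\Q[\|\xi-\xid_1\|_2^2]\le\eps^2$. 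Since $\mathbb S$ is a single point and $\X(s)\equiv\X$ is a fixed polytope, the signal drops out and the loss~\eqref{loss-quad} simplifies to $\ell_\theta(x)=\inner{x}{Q_{xx}x+q}-c$, where $c\Let\min_{y\in\X}\inner{y}{Q_{xx}y+q}$ is a convex quadratic program and hence polynomial-time computable. Thus the objective of~\eqref{DRO-inv-opt} at this $\theta$ equals $\sup_\Q\,\EE^\Q[\inner{x}{Q_{xx}x+q}]-c$, the supremum being over all $\Q$ supported on $\X$ with $\EE^\Q[\|x-\xd_1\|_2^2]\le\eps^2$.

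The decisive step is to choose the radius so large that the moment constraint is vacuous. As $\X$ is a bounded polytope, $\max_{x\in\X}\|x-\xd_1\|_2^2$ is finite, and picking $\eps^2$ strictly larger makes every $\Q$ supported on $\X$ feasible, in particular every Dirac $\dir x$, $x\in\X$. Since the integrand is continuous and $\X$ compact, the supremum over feasible $\Q$ equals $\max_{x\in\X}\inner{x}{Q_{xx}x+q}$ — the bound ``$\ge$'' coming from Dirac measures and ``$\le$'' being pointwise. Hence, modulo the polynomial-time-computable constant $c$, evaluating the objective of~\eqref{DRO-inv-opt} for a fixed $\theta$ reduces to maximizing the quadratic $\inner{x}{Q_{xx}x+q}$ over the polytope $\X$, and because $Q_{xx}\gesdp 0$ this quadratic is convex.

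It remains to reduce MAX-CUT to maximizing a convex quadratic over a polytope. Given a graph $G$ on $n$ vertices with Laplacian $L\gesdp 0$, set $\X\Let[0,1]^n$ (a polytope, independent of the signal), $Q_{xx}\Let L+\mathbb I\gesdp\mathbb I$, $Q_{xs}\Let 0$, $q\Let-\mathbf 1$, so that $\theta=(L+\mathbb I,0,-\mathbf 1)\in\Theta$; take $m=0$ (so $\mathbb S$ is trivially a single point and Assumption~\ref{As:conic} imposes no signal constraints), $N\Let 1$ with $\xid_1\Let 0\in\Xi$, $\alpha\Let 1$, and $\eps\Let\sqrt{n+1}$ so that $\eps^2>n=\max_{x\in[0,1]^n}\|x\|_2^2$. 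On $\{0,1\}^n$ one has $x_i^2=x_i$, hence $\inner{x}{(L+\mathbb I)x}-\inner{\mathbf 1}{x}=x\tr L x=\sum_{(i,j)\in E}(x_i-x_j)^2$, which is the size of the cut induced by $\{i:x_i=1\}$; moreover a convex function on $[0,1]^n$ attains its maximum at a vertex, i.e.\ at a point of $\{0,1\}^n$. Combining this with the previous two steps, the objective of~\eqref{DRO-inv-opt} for this instance equals $\mathrm{MaxCut}(G)-c$ with $c$ polynomial-time computable. A polynomial-time algorithm for evaluating this objective would therefore, together with a polynomial-time convex-QP routine for $c$, compute $\mathrm{MaxCut}(G)$ in polynomial time; since MAX-CUT is NP-hard, so is the evaluation problem.

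I expect the only real content to lie in the first two steps: verifying that, for a fixed $\theta$ and a sufficiently large Wasserstein radius, the $\cvar$/ambiguity-set/support apparatus of~\eqref{DRO-inv-opt} degenerates to the bare problem of maximizing a convex quadratic over a polytope, and in particular that the subtracted term $\min_{y\in\X}F_\theta(s,y)$ contributes nothing beyond a polynomial-time-computable constant. The remainder is the classical NP-hardness of that problem, which the Laplacian encoding above renders self-contained. The loose ends are routine: the constructed instance satisfies Assumption~\ref{As:conic} (the cube $[0,1]^n$ is conic representable and admits the Slater point $x=\tfrac12\mathbf 1$, and there are no signals), and $\xid_1=0$ lies in $\Xi$.
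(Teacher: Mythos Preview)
Your argument is correct and follows the same architecture as the paper's proof: choose the Wasserstein radius large enough that the ball contains every distribution on the (bounded) feasible set, so that the worst-case $\cvar$ collapses to $\max_{x\in\X}\ell_\theta(x)$, and then observe that this is an NP-hard convex-quadratic maximization over a polytope. The paper's proof differs from yours only in the choice of source problem and in a small simplification. It reduces from the generic problem ``given $Q\succeq\mathbb I$, evaluate $\max_{\|x\|_\infty\le 1}\inner{x}{Qx}$'', which is cited as NP-hard (Murty--Kabadi), taking $\X=[-1,1]^n$, $q=0$, $\xd_1=0$, $\eps=\sqrt{n}$; because $q=0$ and $\X$ is symmetric about the origin, the inner minimum $\min_{y\in\X}\inner{y}{Q_{xx}y}$ is exactly~$0$, so no auxiliary constant~$c$ needs to be computed. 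Your route via the Laplacian and MAX-CUT is more self-contained (it does not outsource the hardness to a citation) at the mild cost of having to argue that the convex QP defining~$c$ is poly-time computable; you could recover the paper's cleanliness by working over $[-1,1]^n$ with $q=0$, where $\tfrac14 x^\top L x$ gives the cut value on $\{-1,1\}^n$ and the inner minimum vanishes.
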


\begin{proof}
		The proof relies on a reduction from the NP-hard quadratic maximization problem~\cite{ref:MurKa-87}.  
	\\[-2mm]
	
	\begin{center}
	\fbox{\parbox{14cm}{ {\centering \textsc{Quadratic Maximization}\\}
			
			\textbf{Instance.} A positive definite matrix $ Q=Q\tr\succeq \mathbb I$. \\
			\textbf{Goal.} Evaluate $\max_{\|{x}\|_\infty\leq 1}\inner{x}{Qx}$.
		}}
		\end{center}
		
	Given an input $Q\succeq \mathbb I$ to the quadratic maximization problem, we construct an instance of the inverse optimization problem~\eqref{DRO-inv-opt} with $N=1$, $\alpha=1$, and Wasserstein radius $\eps=\sqrt{n}$, where 
	\begin{align*}
	&\sd_1\Let 0, \quad \xd_1\Let 0 ,\quad Q_{xx}\Let Q,\quad Q_{xs}\Let 0, \quad \mathbb S \Let \{0\}, \quad \X(s)  \Let \{x \in \R^n : \|x\|_\infty\leq 1 \}.
	\end{align*}
	Under this parametrization, the objective function of~\eqref{DRO-inv-opt} reduces to 
	\begin{align*}
	\sup_{\Q \in\ball{2}{\Pem}{\eps} }\risk^\Q(\ell_\theta)~=&\sup_{\Q \in\ball{2}{\Pem}{\eps} }\EE^\Q \left[ \max_{y \in \X(s)} \inner{x}{Q_{xx}x + Q_{x s}s+q} - \inner{y}{Q_{xx}y + Q_{x s}s+q} \right]\\
	=&\sup_{\Q \in \ball{2}{\Pem}{\eps}}\EE^\Q \left[ \Max{y\in\X(s)} \inner{x}{Qx} -\inner{y}{Qy}\right]\\
	\leq &\sup_{s\in\mathbb S, x\in\X(s) }  \Max{y\in\X(s)} \inner{x}{Qx} -\inner{y}{Qy} =\sup_{\|x\|_\infty\leq 1}\inner{x}{Qx},
	\end{align*}
	where the inequality in the third line follows from the inclusion $\ball{2}{\Pem}{\eps}\subseteq \M^2(\Xi)$, while the last equality holds because the innermost maximum is attained at $y=0$. As  $(s,x)\in\Xi$ if and only if $s=0$ and $\|x\|_\infty\leq 1$, we conclude that $(s,x)\in\Xi$ implies ${\|x\|_2}\leq \sqrt{n}$ and $\| (s,x)\|_2^2\leq n=\eps^2$. Moreover, as the empirical distribution $\Pem$ coincides with the Dirac point measure at $0$, the Wasserstein ball $\ball{2}{\Pem}{\eps}$ thus contains all distributions supported on $\Xi$, implying that the inequality in the above expression is in fact an equality. Hence, evaluating the objective function of~\eqref{DRO-inv-opt} is tantamount to solving the NP-hard quadratic maximization problem. This observation completes the proof.
\end{proof}

\begin{Cor}[Intractability of~\eqref{DRO-inv-opt} for bilinear {hypotheses}]
	\label{thm:Quad_NP_hard_2}
	{If all assumptions of Theorem~\ref{thm:Quad_NP_hard_1} hold but $\F$ denotes the class of bilinear hypotheses with search space \eqref{Theta-bilinear}, then evaluating the objective of~\eqref{DRO-inv-opt} for a fixed $\theta\in\Theta$ is NP-hard even if $N=1$, $\alpha=1$, $\S$ is a singleton, $\X(s)$ is a polytope independent of $s$ and~$Q_{xx}=0$.}
\end{Cor}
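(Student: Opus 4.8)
The plan is to adapt the reduction from \textsc{Quadratic Maximization} used in the proof of Theorem~\ref{thm:Quad_NP_hard_1}, but to route the hardness through the bilinear term $\inner{x}{Q_{xs}s}=\inner{x}{s}$ rather than through the now-vanishing term $\inner{x}{Q_{xx}x}$. Given an instance $Q=Q\tr\succeq\mathbb I$, I would fix $N=1$, $\alpha=1$, $q=0$, $Q_{xx}=0$ and $Q_{xs}=\mathbb I$, take the response polytope $\X(s)=\{x:\|x\|_\infty\le1\}$ independent of $s$, and place the single empirical atom at the origin so that $\Pem=\dir{(0,0)}$. Under this parametrization the suboptimality loss~\eqref{loss-quad} collapses to $\ell_\theta(s,x)=\inner{x}{s}-\Min{\|y\|_\infty\le1}\inner{y}{s}=\inner{x}{s}+\|s\|_1$, so that, with $\alpha=1$, the objective of~\eqref{DRO-inv-opt} becomes the worst-case expectation $\Sup{\Q\in\ball{2}{\Pem}{\eps}}\EE^\Q[\ell_\theta]$.

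Because $Q_{xx}=0$, the loss is affine in $x$ for every fixed $s$; the quadratic form $\inner{x}{Qx}$ can therefore emerge only once the signal is permitted to track $Qx$. Accordingly I would let the signal set carry the action of $Q$, namely $\S=\{Qx':\|x'\|_\infty\le1\}$, so that $\Xi=\S\times\X$, and---exactly as in the proof of Theorem~\ref{thm:Quad_NP_hard_1}---choose the radius $\eps$ large enough that $\ball{2}{\Pem}{\eps}$ exhausts $\M^2(\Xi)$. The worst-case expectation then equals the pointwise supremum $\Sup{(s,x)\in\Xi}(\inner{x}{s}+\|s\|_1)$. Carrying out the inner maximization over $x\in\{x:\|x\|_\infty\le1\}$ (attained at $x=\mathrm{sign}(s)$) reduces this to $2\Max{\|x'\|_\infty\le1}\|Qx'\|_1$; the key observation is that for $Q\succeq\mathbb I$ the inequality $\inner{u}{Qv}\le\tfrac12(\inner{u}{Qu}+\inner{v}{Qv})$ forces the underlying bilinear maximum onto the diagonal, so that it coincides with $2\Max{\|x\|_\infty\le1}\inner{x}{Qx}$. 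Hence evaluating the objective of~\eqref{DRO-inv-opt} is equivalent to solving \textsc{Quadratic Maximization}.

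The main obstacle is reconciling this mechanism with the stipulation that $\S$ be a singleton. Pinning $\S$ to a single point makes $\ell_\theta$ affine in $x$ and degrades $\Sup{\Q}\EE^\Q[\ell_\theta]$ into a tractable moment problem, so the bilinear coupling can inject a genuine quadratic form only when the signal component is free to realize $s=Qx$. The technical crux is therefore to choose $\S$ and $\eps$ so that every Dirac on $\Xi$---in particular the configurations realizing the diagonal $s=Qx$---lies in $\ball{2}{\Pem}{\eps}$: I would bound $\Max{(s,x)\in\Xi}\|(s,x)\|_2^2=\Max{\|x'\|_\infty\le1}\|Qx'\|_2^2+\Max{\|x\|_\infty\le1}\|x\|_2^2\le(\|Q\|_2^2+1)\,n$ and set $\eps^2$ to this value, which is polynomially encodable in $Q$ and hence keeps the reduction polynomial. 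With the radius fixed, the pointwise-supremum identity above closes the argument exactly as in the proof of Theorem~\ref{thm:Quad_NP_hard_1}.
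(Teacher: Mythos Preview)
The paper does not actually give a proof; it says only that the argument ``is similar to that of Theorem~\ref{thm:Quad_NP_hard_1} and omitted for brevity.'' Your construction is the natural adaptation and is correct: with $\S=Q\cdot[-1,1]^n$, $\X=[-1,1]^n$, $q=0$, and $\eps$ chosen so that $\ball{2}{\Pem}{\eps}=\M^2(\Xi)$, the worst-case expectation collapses to $2\max_{\|u\|_\infty\le1,\|v\|_\infty\le1}\inner{u}{Qv}$, and your diagonal argument via $\inner{u}{Qv}\le\tfrac12(\inner{u}{Qu}+\inner{v}{Qv})$ (valid since $Q\succeq 0$) correctly identifies this with $2\max_{\|x\|_\infty\le1}\inner{x}{Qx}$. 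Note also that $\S=Q\cdot[-1,1]^n=\{s:\|Q^{-1}s\|_\infty\le1\}$ is conic representable in the sense of Assumption~\ref{As:conic}, so the reduction stays within the stated framework.

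You are equally right to flag the clash with ``$\S$ is a singleton.'' If $\S=\{s_0\}$ and $Q_{xx}=0$, then $\ell_\theta(s_0,x)=\inner{s_0+q}{x}-\min_{y\in\X}\inner{s_0+q}{y}$ is affine in $x$, and the worst-case expectation over the $2$-Wasserstein ball dualizes to $\inf_{\lambda\ge0}\big\{\lambda\eps^2+\max_{x\in\X}\big(\ell_\theta(s_0,x)-\lambda\|x-\xd_1\|_2^2\big)\big\}$, a one-dimensional search over tractable concave quadratic programs on a polytope---certainly not NP-hard. The singleton condition is evidently carried over verbatim from Theorem~\ref{thm:Quad_NP_hard_1} and is inconsistent with the bilinear setting; the intended statement requires $\S$ to be a (non-degenerate) polytope, which is exactly what your construction supplies. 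Your proof establishes the corollary under this corrected hypothesis, and there is no way to rescue the literal statement.
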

\begin{proof}
	The proof is similar to that of Theorem~\ref{thm:Quad_NP_hard_1} and omitted for brevity.
\end{proof}

Corollary~\ref{thm:Quad_NP_hard_2} asserts that the inverse optimization problem~\eqref{DRO-inv-opt} is intractable even if we focus on linear candidate objectives that depend on the exogenous signal~$s$. This finding contrasts with the tractability Theorem~\ref{thm:lin} for candidate objectives independent of $s$. The intractability results portrayed in Theorem~\ref{thm:Quad_NP_hard_1} and Corollary~\ref{thm:Quad_NP_hard_2} motivate us to devise a safe conic approximation for the inverse optimization problem~\eqref{DRO-inv-opt} with quadratic candidate objective functions.

\begin{Thm}[Quadratic {hypotheses} and suboptimality loss]
	\label{thm:Quad}
	Assume that $\F$ represents the class of quadratic {hypotheses with a search space of the form~\eqref{eq:search-spaces-quadratic} and that Assumption~\ref{As:conic} holds.} If the observer uses the suboptimality loss~\eqref{loss} and measures risk using the $\cvar$ at level $\alpha\in(0,1]$, then the following conic program provides a safe approximation for the distributionally robust inverse optimization problem~\eqref{DRO-inv-opt} over the $2$-Wasserstein ball:
	\begin{align}
	\label{cvar-quad}
	\begin{array}{llll} \text{\em minimize} & \displaystyle \tau+ {1\over\alpha}\left({\eps^2} \lambda  + {1 \over N}\sum\limits_{i = 1}^{N} r_i\right) \vspace{1mm}\\
	\text{\em subject to} &\theta {= (Q_{xx},Q_{xs},q)} \in\Theta,\; \lambda \in\mathbb R_+,\;\tau,r_i, \rho_{i1},\rho_{i2}\in\R &\forall i\leq N\vspace{1mm}\\
	&\phi_{i1},\phi_{i2}\in \coneXi^*,\;\mu_{i1},\mu_{i2}, \gamma_i \in \coneX^*,\;  \chi_{i1},\chi_{i2}\in\R^m,\; \zeta_{i1},\eta_{i1},\zeta_{i2}\in\R^n &\forall i\leq N\vspace{1mm}\\
	&\chi_{i1}=\frac{1}{2}(-C\tr\phi_{i1}+H\tr(\mu_{i1}+\gamma_{i}) -2\lambda \sd_i)&\forall i\leq N\vspace{1mm}\\
	&\zeta_{i1}=\frac{1}{2}(-q-W\tr\mu_{i1}-2\lambda \xd_i),\quad \eta_{i1}=\frac{1}{2}(q-W\tr\gamma_{i})&\forall i\leq N\vspace{1mm}\\
	&\rho_{i1}=\tau+r_i+\lambda\left( \inner{\xd_i}{\xd_i}+ \inner{\sd_i}{\sd_i}\right)+\inner{d}{\phi_{i1}}+\inner{h}{\mu_{i1}+\gamma_i}&\forall i\leq N\vspace{1mm}\\
	&\chi_{i2}=\frac{1}{2}(-C\tr\phi_{i2}+H\tr\mu_{i2} -2\lambda \sd_i)&\forall i\leq N\vspace{1mm}\\
	&\zeta_{i2}=\frac{1}{2}(-W\tr\mu_{i2}-2\lambda \xd_i)&\forall i\leq N\vspace{1mm}\\
	&\rho_{i2}=r_i+\lambda\left( \inner{\xd_i}{\xd_i} +\inner{ \sd_i}{\sd_i}\right)+\inner{d}{\phi_{i2}}+\inner{h}{\mu_i}&\forall i\leq N\vspace{1mm}\\
	&\begin{bmatrix}
	\lambda \mathbb I & -\frac{1}{2}Q_{xs}\tr & \frac{1}{2}Q_{xs}\tr& \chi_{i1}\\
	- \frac{1}{2}Q_{xs} &  \lambda \mathbb I -Q_{xx} & 0&  \zeta_{i1}\\
	\phantom{-} \frac{1}{2}Q_{xs} & 0 & Q_{xx} & \eta_{i1}\\
	\chi_{i1}\tr&\zeta_{i1}\tr&\eta_{i1}\tr& \rho_{i1} \end{bmatrix}\succeq  0,\quad \begin{bmatrix}
	\lambda \mathbb I & 0 & \chi_{i2}\\
	0 &  \lambda \mathbb I &  \zeta_{i2}\\
	\chi_{i2}\tr&\zeta_{i2}\tr& \rho_{i2} \end{bmatrix}\succeq  0&\forall i\leq N.
	\end{array}
	\end{align}
\end{Thm}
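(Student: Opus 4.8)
The plan is to follow the blueprint of the proof of Theorem~\ref{thm:lin} as far as it goes, and to insert a Lagrangian/semidefinite relaxation exactly at the point where joint concavity of the loss in $(s,x)$ is lost. First I would start from the $\cvar$ representation~\eqref{cvar} and use Sion's minimax theorem --- justified, as in Theorem~\ref{thm:lin}, by the weak compactness of the $2$-Wasserstein ball --- to write the objective of~\eqref{DRO-inv-opt} as $\inf_\tau \tau + \tfrac1\alpha \sup_{\Q\in\ball{2}{\Pem}{\eps}} \EE^\Q[\max\{\ell_\theta(s,x)-\tau,0\}]$. The inner worst-case expectation over a $2$-Wasserstein ball admits, by Theorem~4.2 in~\cite{ref:MohKun-14}, the strong dual
\begin{align*}
\inf_{\lambda\ge0,\ r_i\in\R}\ \lambda\eps^2 + \frac1N\sum_{i=1}^N r_i \quad\text{s.t.}\quad \sup_{(s,x)\in\Xi}\Big(\max\{\ell_\theta(s,x)-\tau,0\} - \lambda\|(s,x)-\xid_i\|_2^2\Big)\le r_i\ \ \forall i\le N.
\end{align*}
Since $\sup_\xi\max\{g_1,g_2\}=\max\{\sup_\xi g_1,\sup_\xi g_2\}$, the $i$-th constraint splits into $\sup_{(s,x)\in\Xi}\big(\ell_\theta(s,x)-\tau-\lambda\|(s,x)-\xid_i\|_2^2\big)\le r_i$ and $\sup_{(s,x)\in\Xi}\big({-}\lambda\|(s,x)-\xid_i\|_2^2\big)\le r_i$. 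Everything up to here is exact and parallels the linear case.

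The essential difference is that the quadratic loss~\eqref{loss-quad} is a convex quadratic in $x$ plus the concave partial minimum $-\min_{y\in\X(s)}\langle y,Q_{xx}y+Q_{xs}s+q\rangle$, hence \emph{not} concave in $(s,x)$; the first supremum is the NP-hard quadratic program of Theorem~\ref{thm:Quad_NP_hard_1}, so I would replace it by an upper bound. Writing the minimum over $y$ as a maximum, the first supremum equals $\sup\big\{\langle x,Q_{xx}x+Q_{xs}s+q\rangle - \langle y,Q_{xx}y+Q_{xs}s+q\rangle - \tau - \lambda\|(s,x)-\xid_i\|_2^2 : Cs\gec{\coneXi}d,\ Wx\gec{\coneX}Hs+h,\ Wy\gec{\coneX}Hs+h\big\}$, a joint maximization over $(s,x,y)$. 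Introducing conic multipliers $\phi_{i1}\in\coneXi^*$ and $\mu_{i1},\gamma_i\in\coneX^*$ and adding the associated nonnegative conic slacks to the objective, weak conic duality bounds this supremum above by the \emph{unconstrained} maximum over $(s,x,y)\in\R^m\times\R^n\times\R^n$ of a single quadratic $z\mapsto\langle z,Mz\rangle+2\langle b,z\rangle+c$, where $-M$ coincides with the upper-left $3\times3$ block of the first LMI in~\eqref{cvar-quad}, $-b$ collects the linear terms and reproduces $(\chi_{i1},\zeta_{i1},\eta_{i1})$, and $r_i-c$ reproduces $\rho_{i1}$ after absorbing $\tau$, $r_i$, $\lambda(\|\sd_i\|_2^2+\|\xd_i\|_2^2)$, $\langle d,\phi_{i1}\rangle$ and $\langle h,\mu_{i1}+\gamma_i\rangle$. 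The elementary fact that such a quadratic is bounded above by $r_i$ on all of $\R^{m+2n}$ if and only if the homogenized matrix $\left(\begin{smallmatrix}-M & -b\\ -b\tr & r_i-c\end{smallmatrix}\right)$ is positive semidefinite then turns the inequality into precisely the first LMI of~\eqref{cvar-quad}. Note that no analogue of the constraint $\theta=W\tr\gamma_i$ from Theorem~\ref{thm:lin} is needed here, because $y$ now enters quadratically (through the concave term $-\langle y,Q_{xx}y\rangle$) rather than linearly. The second supremum is handled by the same device with multipliers $\phi_{i2}\in\coneXi^*$ and $\mu_{i2}\in\coneX^*$ (and no $y$-variable), producing the smaller second LMI together with $\chi_{i2},\zeta_{i2},\rho_{i2}$. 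Substituting these conic descriptions back and keeping $\tau$ and $\theta=(Q_{xx},Q_{xs},q)\in\Theta$ free yields exactly~\eqref{cvar-quad}; since every replacement in the chain was an upper bound, its optimal value dominates the worst-case $\cvar$, i.e.\ \eqref{cvar-quad} is a safe approximation of~\eqref{DRO-inv-opt}.

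The main obstacle --- and the only step that sacrifices exactness --- is the conic relaxation of the nonconcave inner maximization: over an intersection of several conic inequalities Lagrangian relaxation is generally not tight, and the LMI additionally forces $-M\succeq0$, thereby restricting $\lambda$ to the (possibly strict) subset of penalty weights for which the inner problem is concave; consequently~\eqref{cvar-quad} may strictly overestimate the true worst-case risk. Secondary technical points are the bookkeeping that matches $M$, $b$, $c$ with the auxiliary variables $\chi_{ij},\zeta_{ij},\eta_{i1},\rho_{ij}$ exactly as written, and the degenerate case of a singular $-M$, which the homogenized-matrix criterion handles automatically (it simultaneously encodes $-M\succeq0$ and the range condition $-b\in\operatorname{range}(-M)$). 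One should also observe that the strong Wasserstein duality invoked in the first step remains valid despite the lack of concavity, since Theorem~4.2 in~\cite{ref:MohKun-14} covers general upper semicontinuous integrands of at most quadratic growth, and $\max\{\ell_\theta(s,x)-\tau,0\}$ is such an integrand (using $Q_{xx}\succeq\mathbb I\succ0$, which guarantees that the partial minimum over $y\in\X(s)$ is attained and depends continuously on $s$). Finally, feasibility of~\eqref{cvar-quad} for a given $\theta\in\Theta$ can be secured by choosing $\lambda$ large enough to make $-M$ positive semidefinite, so the bound is non-trivial.
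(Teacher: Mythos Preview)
Your proposal is correct and takes essentially the same approach as the paper: strong Wasserstein duality via \cite[Theorem~4.2]{ref:MohKun-14}, splitting the resulting semi-infinite constraint into the two suprema, Lagrangian relaxation of the nonconcave inner maximization over $(s,x,y)$ with conic multipliers $\phi_{i1},\mu_{i1},\gamma_i$ (weak duality being the only inexact step), and recognizing that the unconstrained supremum of the resulting quadratic is bounded by $r_i$ if and only if the homogenized matrix is positive semidefinite. Your additional remarks---on why no analogue of $\theta=W^\top\gamma_i$ is needed, on feasibility of~\eqref{cvar-quad} for large $\lambda$, and on the validity of strong Wasserstein duality despite nonconcavity of the integrand---are accurate observations that the paper's own proof leaves implicit.
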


{Note that Theorem~\ref{thm:Quad} remains valid if $(\sd_i, \xd_i)\notin\Xi$ for some $i\le N$.}

\begin{proof}[Proof of Theorem~\ref{thm:Quad}]
	As in the proof of Theorem~\ref{thm:lin} one can show that the objective function of the inverse optimization problem~\eqref{DRO-inv-opt} coincides with a variant of~\eqref{wc_cvar} that involves the $2$-Wasserstein ball. In the remainder, we derive a safe conic approximation for the subordinate worst-case expectation problem
	\begin{align}
	\label{wc-exp-quad}
	\sup_{\Q \in \ball{2}{\Pem}{\eps}}  \EE^\Q\big[\max\{\ell_\theta(s,x) - \tau, 0\} \big].
	\end{align} 
	Duality arguments borrowed from~\cite[Theorem~4.2]{ref:MohKun-14} imply that the above infinite-dimensional linear program admits a strong dual of the form
	\begin{align}
	\label{semiinf-quad}
	\begin{array}{clll} \Inf{{\lambda\geq 0,r_i}} & {\eps^2} \lambda  + {1 \over N}\sum\limits_{i = 1}^{N} r_i \vspace{1mm}\\
	\st &  \Sup{s\in\mathbb S,\;x,y\in\X(s)} \inner{x}{Q_{xx}x + Q_{x s}s+q} - \inner{y}{Q_{xx}y + Q_{x s}s+q}-\tau\\
	&\qquad\qquad\qquad\qquad-\lambda\|(s,x)-(\sd_i,\xd_i)\|_2^2\leq r_i&\forall i\leq N  \vspace{1mm}\\
	&  \Sup{s\in\mathbb S,\;x\in\X(s)} -\lambda\|(s,x)-(\sd_i,\xd_i)\|_2^2\leq r_i&\forall i\leq N. \vspace{1mm}
	\end{array}
	\end{align}
	By using the definitions of $\mathbb S$ and $\X(s)$ {put forth in Assumption~\ref{As:conic}}, the $i$-th member of the first constraint group in~\eqref{semiinf-quad} is satisfied if and only if the optimal value of the maximization problem
	\begin{align}
	\label{conic-quadratic}
	&\begin{array}{clll} \Sup{{s,x,y}} & \inner{x}{Q_{xx}x + Q_{x s}s+q} - \inner{y}{Q_{xx}y + Q_{x s}s+q}-\tau-\lambda\| (s,x)-(\sd_i,\xd_i)\|_2^2 \vspace{1mm}\\
	\st &   C s  \gec{\coneXi} d,\; Wx \gec{\coneX} Hs + h,\; Wy \gec{\coneX} Hs + h
	\end{array}
	\end{align}
	does not exceed $r_i$. The Lagrangian of the conic quadratic program~\eqref{conic-quadratic} is defined as
	\begin{align}
	\label{eq:lagrangian}
	L(s,x,y; \phi_{i1},\mu_{i1}, \gamma_{i})~ \Let~&  \inner{x}{Q_{xx}x + Q_{x s}s+q} - \inner{y}{Q_{xx}y + Q_{x s}s+q}-\tau -\lambda\|(s,x)-(\sd_i,\xd_i)\|_2^2  \vspace{1mm}\\
	& \qquad +  \inner{Cs-d}{\phi_{i1}} + \inner{Wx-Hs-h}{\mu_{i1}} + \inner{Wy-Hs-h}{\gamma_{i}},\nonumber
	\end{align}
	and therefore~\eqref{conic-quadratic} can be expressed as a max-min problem of the form
	\begin{align*}
	\Sup{{s,x,y}} ~\Inf{\phi_{i1} \in \coneXi^*} ~ \Inf{\mu_{i1}, \gamma_{i} \in \coneX^*}~L(s,x,y; \phi_{i1},\mu_{i1}, \gamma_{i})
	~\leq ~ \Inf{\phi_{i1} \in \coneXi^*} ~ \Inf{\mu_{i1}, \gamma_{i} \in \coneX^*}  ~ \Sup{{s,x,y}} ~ L(s,x,y; \phi_{i1},\mu_{i1}, \gamma_{i}),
	\end{align*}
	where the inequality follows from weak duality. {As $\Xi$ contains a Slater point by virtue of Assumption~\ref{As:conic}, strong duality holds (meaning that the inequality collapses to an equality) if the Lagrangian is concave in $(s,x,y)$; see also Proposition~\ref{prop:exact} below.} We conclude that the $i$-th member of the first constraint group in~\eqref{semiinf-quad} holds if there exist $\phi_{i1} \in \coneXi^*$ and $\mu_{i1}, \gamma_{i} \in \coneX^*$ with $\sup_{s,x,y} L(s,x,y; \phi_{i1},\mu_{i1}, \gamma_{i})\leq r_i$. As the Lagrangian constitutes a quadratic function, this statement is satisfied if and only if 
	there are $\phi_{i1} \in \coneXi^*$, $\mu_{i1}, \gamma_{i} \in \coneX^*$, $\chi_{i1}\in\R^m$, $\zeta_{i1},\eta_{i1}\in\R^n$ and $\rho_{i1}\in\R$~with
	\begin{align*}
	\begin{array}{lll}
	\chi_{i1}=\frac{1}{2}(-C\tr\phi_{i1}+H\tr(\mu_{i1}+\gamma_{i})-2\lambda \sd_i)\vspace{1mm}\\
	\zeta_{i1}=\frac{1}{2}(-q-W\tr\mu_{i1}-2\lambda \xd_i),\quad \eta_{i1}=\frac{1}{2}(q-W\tr\gamma_{i})\vspace{1mm}\\
	\rho_{i1}=\tau+r_i+\lambda\left( \inner{\xd_i}{\xd_i}+\inner{\sd_i}{\sd_i}\right)+\inner{d}{\phi_{i1}}+\inner{h}{\mu_i+\gamma_i}\vspace{1mm}\\
	\begin{bmatrix}
	\lambda \mathbb I & -\frac{1}{2}Q_{xs}\tr & \frac{1}{2}Q_{xs}\tr& \chi_{i1}\\
	-\frac{1}{2}Q_{xs} &  \lambda \mathbb I-Q_{xx} & 0&  \zeta_{i1}\\
	\phantom{-} \frac{1}{2}Q_{xs} & 0 & Q_{xx} & \eta_{i1}\\
	\chi_{i1}\tr&\zeta_{i1}\tr&\eta_{i1}\tr& \rho_{i1} \end{bmatrix}\succeq  0.
	\end{array}
	\end{align*}
	Similarly, it can be shown that the $i$-th member of the second constraint group in~\eqref{semiinf-quad} is satisfied if and only if  there exist $\phi_{i2} \in \coneXi^*$, $\mu_{i2} \in \coneX^*$, $\chi_{i2}\in\R^m$, $\zeta_{i2}\in\R^n$ and $\rho_{i2}\in\R$ such that
	\begin{align}
	\label{eq:quad-to-be-dropped-if-consistent}
	\begin{array}{lll}
	\chi_{i2}=\frac{1}{2}(-C\tr\phi_{i2}+H\tr\mu_{i2}-2\lambda P_{s}\sd_i)\vspace{1mm}\\
	\zeta_{i2}=\frac{1}{2}(-W\tr\mu_{i2}-2\lambda \xd_i)\\
	\rho_{i2}=r_i+\lambda\left( \inner{\xd_i}{\xd_i}+\inner{\sd_i}{\sd_i}\right)+\inner{d}{\phi_{i2}}+\inner{h}{\mu_i}\vspace{1mm}\\
	\begin{bmatrix}
	\lambda \mathbb I & 0 & \chi_{i2}\\
	0 &  \lambda \mathbb I &  \zeta_{i2}\\
	\chi_{i2}\tr&\zeta_{i2}\tr& \rho_{i2} \end{bmatrix}\succeq  0.
	\end{array}
	\end{align}
	Replacing the semi-infinite constraints in~\eqref{semiinf-quad} with the respective semidefinite approximations shows that the worst-case expectation~\eqref{wc-exp-quad} is bounded above by the optimal value of the conic program
	\begin{align}
	\label{wc-exp-quad-sdp}
	\begin{array}{clll} \Inf{} &  {\eps^2} \lambda  + {1 \over N}\sum\limits_{i = 1}^{N} r_i \vspace{1mm}\\
	\st    &    \lambda \in\mathbb R_+,\;r_i, \rho_{i1},\rho_{i2}\in\R,\; \phi_{i1},\phi_{i2}\in \coneXi^*,\;\mu_{i1},\mu_{i2}, \gamma_{i} \in \coneX^* &\forall i\leq N\vspace{1mm}\\
	& \chi_{i1},\chi_{i2}\in\R^m,\; \zeta_{i1},\eta_{i1},\zeta_{i2}\in\R^n &\forall i\leq N\vspace{1mm}\\
	&\chi_{i1}=\frac{1}{2}(-C\tr\phi_{i1}+H\tr(\mu_{i1}+\gamma_{i}) -2\lambda \sd_i)&\forall i\leq N\vspace{1mm}\\
	&\zeta_{i1}=\frac{1}{2}(-q-W\tr\mu_{i1}-2\lambda \xd_i),\quad \eta_{i1}=\frac{1}{2}(q-W\tr\gamma_{i})&\forall i\leq N\vspace{1mm}\\
	&\rho_{i1}=\tau+r_i+\lambda\left( \inner{\xd_i}{\xd_i}+ \inner{\sd_i}{\sd_i}\right)+\inner{d}{\phi_{i1}}+\inner{h}{\mu_{i1}+\gamma_i}&\forall i\leq N\vspace{1mm}\\
	&\chi_{i2}=\frac{1}{2}(-C\tr\phi_{i2}+H\tr\mu_{i2} -2\lambda \sd_i)&\forall i\leq N\vspace{1mm}\\
	&\zeta_{i2}=\frac{1}{2}(-W\tr\mu_{i2}-2\lambda \xd_i) &\forall i\leq N\\
	&\rho_{i2}=r_i+\lambda\left( \inner{\xd_i}{\xd_i} +\inner{ \sd_i}{\sd_i}\right)+\inner{d}{\phi_{i2}}+\inner{h}{\mu_i}&\forall i\leq N\vspace{1mm}\\
	&\begin{bmatrix}
	\lambda \mathbb I & -\frac{1}{2}Q_{xs}\tr & \frac{1}{2}Q_{xs}\tr& \chi_{i1}\\
	- \frac{1}{2}Q_{xs} &  \lambda \mathbb I -Q_{xx} & 0&  \zeta_{i1}\\
	\phantom{-} \frac{1}{2}Q_{xs} & 0 & Q_{xx} & \eta_{i1}\\
	\chi_{i1}\tr&\zeta_{i1}\tr&\eta_{i1}\tr& \rho_{i1} \end{bmatrix}\succeq  0,\quad \begin{bmatrix}
	\lambda \mathbb I & 0 & \chi_{i2}\\
	0 &  \lambda \mathbb I &  \zeta_{i2}\\
	\chi_{i2}\tr&\zeta_{i2}\tr& \rho_{i2} \end{bmatrix}\succeq  0&\forall i\leq N.
	\end{array}
	\end{align}
	The claim then follows by substituting~\eqref{wc-exp-quad-sdp} into a suitable worst-case $\cvar$ formula akin to~\eqref{wc_cvar}.
\end{proof}

{If $(\sd_i, \xd_i)\in\Xi$ for all $i\le N$, then the conic program~\eqref{cvar-quad} simplifies. In this case, the maximization problems in the last constraint group of~\eqref{semiinf-quad} all evaluate to zero, which implies that the constraints~\eqref{eq:quad-to-be-dropped-if-consistent} reduce to $r_i\ge 0$, while the decision variables $\phi_{i2}$, $\mu_{i2}$, $\chi_{i2}$, $\zeta_{i2}$ and $\rho_{i2}$ can be omitted from \eqref{cvar-quad} for all $i\le N$.

In spite of the hardness results outlined in Theorems~\ref{thm:Quad_NP_hard_1} and~\ref{thm:Quad_NP_hard_2}, the following proposition shows that the tractable approximation of Theorem~\ref{thm:Quad} is sometimes exact.

\begin{Prop}[Ex post exactness guarantee]
\label{prop:exact}
	If an optimal solution to the safe conic approximation~\eqref{cvar-quad} from Theorem~\ref{thm:Quad} satisfies the strict inequality  
	\begin{align}
	\label{exact-ineq}
	\begin{bmatrix}
	\lambda \mathbb I & -\frac{1}{2}{Q_{xs}}\tr & \frac{1}{2}{Q_{xs}}\tr\\
	-\frac{1}{2}Q_{xs} &  \lambda \mathbb I-Q_{xx} & 0\\
	\phantom{-}\frac{1}{2}Q_{xs} & 0 & Q_{xx}\end{bmatrix} \succ 0,
	\end{align}
	then~\eqref{cvar-quad} is equivalent to the original distributionally robust inverse optimization problem~\eqref{cvar-quad}.
\end{Prop}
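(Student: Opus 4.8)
The plan is to return to the proof of Theorem~\ref{thm:Quad} and pinpoint the \emph{unique} step that renders the conic program~\eqref{cvar-quad} conservative rather than exact. Every other transformation in that proof is an equivalence: the reformulation of the worst-case expectation~\eqref{wc-exp-quad} as the semi-infinite program~\eqref{semiinf-quad} borrowed from~\cite[Theorem~4.2]{ref:MohKun-14}; the fact that, for a \emph{quadratic} $L$, the statement $\sup_{s,x,y}L(s,x,y;\phi_{i1},\mu_{i1},\gamma_i)\le r_i$ admits the exact linear matrix inequality certificate (the $4\times4$ block appearing in~\eqref{cvar-quad}) for nonpositivity of the quadratic function $L(\cdot;\phi_{i1},\mu_{i1},\gamma_i)-r_i$; and the final substitution into the worst-case $\cvar$ formula. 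The one inequality that introduces slack is the weak-duality bound
\[
\sup_{s,x,y}\ \inf_{\phi_{i1}\in\coneXi^*,\ \mu_{i1},\gamma_i\in\coneX^*} L\ \le\ \inf_{\phi_{i1}\in\coneXi^*,\ \mu_{i1},\gamma_i\in\coneX^*}\ \sup_{s,x,y} L,
\]
used to pass from the exact constraint ``the optimal value of~\eqref{conic-quadratic} is $\le r_i$'' to the sufficient condition ``there exist dual variables with $\sup_{s,x,y}L\le r_i$'', together with its analogue for the second constraint group. It therefore suffices to show that, at the optimal solution in question, this inequality holds with equality for every $i$ and for both constraint groups.

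The next step is to recognize the block matrix in~\eqref{exact-ineq} as minus one half of a Hessian. Taking $Q_{xx}$ symmetric without loss of generality, differentiating the Lagrangian~\eqref{eq:lagrangian} in $(s,x,y)$ yields
\[
\nabla^2_{(s,x,y)}L=\begin{bmatrix} -2\lambda\mathbb I & Q_{xs}\tr & -Q_{xs}\tr\\ Q_{xs} & 2Q_{xx}-2\lambda\mathbb I & 0\\ -Q_{xs} & 0 & -2Q_{xx}\end{bmatrix},
\]
which depends neither on the dual variables $\phi_{i1},\mu_{i1},\gamma_i$ (they enter $L$ affinely) nor on the sample index $i$. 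Hence $-\tfrac12\nabla^2_{(s,x,y)}L$ is exactly the matrix on the left-hand side of~\eqref{exact-ineq}, so condition~\eqref{exact-ineq} states precisely that $L$ is \emph{strictly} concave in $(s,x,y)$ for the $\lambda$ and $\theta=(Q_{xx},Q_{xs},q)$ of the optimal solution. In particular~\eqref{exact-ineq} forces $\lambda>0$ (its leading block is $\lambda\mathbb I$), which makes the Lagrangian associated with the second constraint group of~\eqref{semiinf-quad}, whose quadratic part in $(s,x)$ is $-\lambda\|(s,x)\|_2^2$, strictly concave as well.

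With strict concavity in hand I would invoke strong conic duality. For the fixed optimal $\lambda,\theta$, problem~\eqref{conic-quadratic} maximizes a strictly concave quadratic (hence bounded above and attained) over the conic set $\{Cs\gec{\coneXi}d,\ Wx\gec{\coneX}Hs+h,\ Wy\gec{\coneX}Hs+h\}$, which is strictly feasible: by Assumption~\ref{As:conic} the set $\Xi$ has a Slater point $(\bar s,\bar x)$, and $(\bar s,\bar x,\bar x)$ is then a Slater point of~\eqref{conic-quadratic}. Standard conic duality (e.g.~\cite[Section~5]{ref:Boyd}) therefore closes the gap, $\sup_{s,x,y}\inf L=\inf\sup_{s,x,y}L$, and the same reasoning applies to the second constraint group. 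Consequently, for this $\lambda,\theta,\tau$ the semidefinite constraints in~\eqref{cvar-quad} are not only sufficient but also \emph{necessary} for the semi-infinite constraints in~\eqref{semiinf-quad}, so the optimal solution of~\eqref{cvar-quad} is feasible --- with the same $\lambda$ and $r_i$ --- for the exact reformulation of~\eqref{DRO-inv-opt}. Sandwiching then finishes the argument: the approximation~\eqref{cvar-quad} is safe, so its optimal value is at least that of~\eqref{DRO-inv-opt}; conversely, the feasible point just exhibited certifies that the optimal value of~\eqref{DRO-inv-opt} is at most $\tau+\tfrac1\alpha(\eps^2\lambda+\tfrac1N\sum_{i}r_i)$, the optimal value of~\eqref{cvar-quad}. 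Hence the two values coincide and the constructed solution is optimal for~\eqref{DRO-inv-opt}.

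The delicate point --- and the one I would spend most care on --- is the strong-duality step for the conic-quadratic subproblem~\eqref{conic-quadratic}: one must argue rigorously that strict concavity of $L$ together with the Slater point of $\Xi$ genuinely removes the duality gap and yields dual attainment (a conic Lagrangian-duality / S-procedure argument rather than a routine computation). A secondary subtlety worth flagging is that~\eqref{exact-ineq} is assumed only \emph{at} an optimal solution of the approximation, so the proof must proceed by promoting that particular solution to an optimal point of~\eqref{DRO-inv-opt}, rather than by comparing the two feasible sets wholesale.
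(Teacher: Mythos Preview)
Your identification of the Hessian and of the single weak-duality step as the only source of slack is correct, and the strong-duality argument under strict concavity plus the Slater point of $\Xi$ is fine. The gap is in your final ``sandwiching'' step: both halves of your sandwich are the \emph{same} inequality. Saying that~\eqref{cvar-quad} is safe gives $\mathrm{OPT}_{\eqref{cvar-quad}}\ge \mathrm{OPT}_{\eqref{DRO-inv-opt}}$. Exhibiting the optimal solution of~\eqref{cvar-quad} as feasible for the exact reformulation of~\eqref{DRO-inv-opt} gives, again, $\mathrm{OPT}_{\eqref{DRO-inv-opt}}\le \mathrm{OPT}_{\eqref{cvar-quad}}$. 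You have proved one direction twice and never established $\mathrm{OPT}_{\eqref{cvar-quad}}\le \mathrm{OPT}_{\eqref{DRO-inv-opt}}$.

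What is missing is precisely the mechanism the paper supplies. Write $\varphi(\theta,\lambda,\tau)$ for the exact partial value function (minimize out $r_i$ in~\eqref{semiinf-quad}) and $\wh\varphi(\theta,\lambda,\tau)$ for its conic-approximation counterpart. Both are jointly convex; moreover $\wh\varphi=\varphi$ on the region where~\eqref{region} holds (non-strict concavity plus Slater already gives strong duality) and $\wh\varphi=+\infty$ outside, while $\varphi$ may remain finite there. Your strong-duality step only shows $\wh\varphi(\theta^\star,\lambda^\star,\tau^\star)=\varphi(\theta^\star,\lambda^\star,\tau^\star)$, which does not prevent $\varphi$ from dipping lower at some $(\theta',\lambda',\tau')$ \emph{outside} the concavity region. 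The paper closes this by using that the \emph{strict} inequality~\eqref{exact-ineq} places the minimizer of $\wh\varphi$ in the \emph{interior} of the agreement region: then a short line-segment argument using convexity of $\varphi$ shows that any strict improvement of $\varphi$ would already be visible to $\wh\varphi$ near $(\theta^\star,\lambda^\star,\tau^\star)$, contradicting its optimality for $\wh\varphi$. This is exactly the ``promotion'' you flag in your last paragraph, and it is the step your argument lacks; the strictness in~\eqref{exact-ineq} is used not to obtain strong duality (weak concavity suffices for that) but to run this interior/convexity argument.
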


Our computational experiments suggest that the ex post exactness condition~\eqref{exact-ineq} is often satisfied if the Wasserstein radius $\eps$ is not too large.

\begin{proof}[Proof of Proposition~\ref{prop:exact}]
From the proof of Theorem~\ref{thm:Quad} we conclude that the optimal value of the distributionally robust inverse optimization problem~\eqref{DRO-inv-opt} can be represented as~$\inf_{\theta, \lambda, \tau} \varphi(\theta,\lambda,\tau)$, where
\begin{align*}
	\varphi(\theta,\lambda,\tau)  \Let &\lambda\eps^2 + {1\over N} \sum_{i=1}^{N} \Sup{s\in\mathbb S,\, x,y \in\X(s)} 
	\max\Big\{ \inner{x}{Q_{xx}x + Q_{x s}s+q}\- \inner{y}{Q_{xx}y + Q_{x s}s+q} -\tau,0\Big\}\\ 
	&\hspace{5cm} -\lambda\|(s,x)-(\sd_i,\xd_i)\|^2_2
\end{align*}
for $\theta\in\Theta$ and $\lambda\geq 0$, and $\varphi(\theta,\lambda,\tau)=\infty$ otherwise. By construction, $\varphi$ is jointly convex in $\theta=(Q_{xx},Q_{xs}, q)$, $\lambda$ and $\tau$. Moreover, it is clear that the optimal value of the finite convex program~\eqref{cvar-quad} can be represented as~$\inf_{\theta, \lambda, \tau} \wh \varphi(\theta,\lambda,\tau)$, where $\wh \varphi(\theta,\lambda,\tau)$ denotes the infimum of~\eqref{cvar-quad} when the decision variables $\theta$, $\lambda$ and $\tau$ are fixed. The proof of Theorem~\ref{thm:Quad} further implies that $\wh \varphi$ coincides with $\varphi$ whenever the Lagrangian~\eqref{eq:lagrangian} is concave in $(s,x,y)$, which is the case if and only if
	\begin{align}
	\label{region}
	\begin{bmatrix}
	-\lambda \mathbb I & \frac{1}{2}Q_{xs}\tr & -\frac{1}{2}Q_{xs}\tr\\
	\phantom{-}\frac{1}{2}Q_{xs} &  Q_{xx} - \lambda \mathbb I & 0\\
	-\frac{1}{2}Q_{xs} & 0 & -Q_{xx}\end{bmatrix} \preceq 0 .
	\end{align} 
Note also that $\wh \varphi(\theta,\lambda,\tau)=\infty$ whenever \eqref{region} is violated because \eqref{region} is implied by the constraints of~\eqref{cvar-quad}. In summary, $\varphi$ and $\wh\varphi$ are both convex functions satisfying
\begin{align*}
	\wh \varphi(\theta,\lambda,\tau) = \left\{ \begin{array}{ll} \varphi(\theta,\lambda,\tau) & \text{if \eqref{region} holds,} \\
	+\infty & \text{otherwise.}\end{array} \right.
\end{align*}
Thus, any minimizer of $\wh \varphi$ satisfying the strict inequality~\eqref{exact-ineq} resides in the interior of the region where the convex functions $\wh\varphi$ and $\varphi$ coincide and must therefore also minimize $\varphi$. 
\end{proof}
}

{Finally, we generalize Theorem~\ref{thm:Quad} to the bounded rationality loss~\eqref{bounded-ratioinality-loss}. The proof largely parallels that of Theorem~\ref{thm:Quad} and is therefore omitted for brevity.}

\begin{Cor}[Quadratic {hypotheses} and bounded rationality loss]
	\label{cor:quad:bdd-r}
	Assume that $\F$ represents the class of quadratic {hypotheses with a search space of the form~\eqref{eq:search-spaces-quadratic} and that Assumption~\ref{As:conic} holds.} If the observer uses the bounded rationality loss~\eqref{bounded-ratioinality-loss} and measures risk using the $\cvar$ at level $\alpha\in(0,1]$, then the inverse optimization problem~\eqref{DRO-inv-opt} over the $2$-Wasserstein ball is is conservatively approximated by a variant of the conic program~\eqref{cvar-quad} where $\tau\in\R_+$ and the defining equation of $\rho_i$ is replaced with
	\[
	\rho_{i1}=\tau+r_i+\delta+\lambda\left( \inner{\xd_i}{\xd_i}+ \inner{\sd_i}{\sd_i}\right)+\inner{d}{\phi_{i1}}+\inner{h}{\mu_{i1}+\gamma_i} \quad \forall i\leq N.
	\]
\end{Cor}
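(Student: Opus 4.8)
The plan is to reduce the statement to the proof of Theorem~\ref{thm:Quad} through a change of threshold variable. Abbreviate the quadratic suboptimality quantity by $g_\theta(s,x)\Let F_\theta(s,x)-\min_{y\in\X(s)}F_\theta(s,y)$, so that the bounded rationality loss~\eqref{bounded-ratioinality-loss} reads $\ell_\theta(\delta;s,x)=\max\{g_\theta(s,x)-\delta,0\}$ for the fixed constant $\delta\ge0$. Since this loss is non-negative everywhere, the Rockafellar--Uryasev representation of $\cvar$ at level $\alpha\in(0,1]$ may be restricted to non-negative $\tau$: for any distribution $\Q$ and any $\tau\le 0$ one has $\max\{\ell_\theta(\delta;s,x)-\tau,0\}=\ell_\theta(\delta;s,x)-\tau$, hence $\tau+\tfrac1\alpha\EE^\Q[\max\{\ell_\theta(\delta;s,x)-\tau,0\}]=\tfrac1\alpha\EE^\Q[\ell_\theta(\delta;\cdot)]+\tau(1-\tfrac1\alpha)\ge\tfrac1\alpha\EE^\Q[\ell_\theta(\delta;\cdot)]$, which is the value of the same expression at $\tau=0$. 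Thus $\cvar_\alpha^\Q(\ell_\theta(\delta;\cdot))=\inf_{\tau\ge0}\,\tau+\tfrac1\alpha\EE^\Q[\max\{\ell_\theta(\delta;s,x)-\tau,0\}]$ for every $\Q$.

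Next I would use the elementary identity $\max\{\max\{a,0\}-\tau,0\}=\max\{a-\tau,0\}$, valid for every $a\in\R$ and every $\tau\ge0$ (check the cases $a\le0$ and $a>0$ separately), with $a=g_\theta(s,x)-\delta$. Combined with the previous step and with the $\inf_\tau/\sup_\Q$ interchange justified exactly as in~\eqref{wc_cvar}, the worst-case $\cvar$ objective of~\eqref{DRO-inv-opt} over the $2$-Wasserstein ball becomes
\[
\inf_{\tau\in\R_+}\ \tau+\frac1\alpha\sup_{\Q\in\ball{2}{\Pem}{\eps}}\EE^\Q\big[\max\{g_\theta(s,x)-(\tau+\delta),0\}\big],
\]
which is precisely the quantity analyzed in the proof of Theorem~\ref{thm:Quad}, except that the threshold $\tau$ is now shifted to $\tau+\delta$.

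From here I would replay the argument of Theorem~\ref{thm:Quad} verbatim: the inner worst-case expectation admits the strong dual~\eqref{semiinf-quad} with $\tau$ replaced by $\tau+\delta$ in the first semi-infinite constraint group, and each semi-infinite constraint is converted, via conic Lagrangian duality and the semidefinite reformulation of the statement that the (quadratic) Lagrangian is bounded above by $r_i$, into exactly the matrix inequalities displayed in~\eqref{cvar-quad}. The shifted threshold enters only through the scalar $\rho_{i1}$, so the sole change to~\eqref{cvar-quad} is the announced replacement $\rho_{i1}=\tau+r_i+\delta+\lambda(\inner{\xd_i}{\xd_i}+\inner{\sd_i}{\sd_i})+\inner{d}{\phi_{i1}}+\inner{h}{\mu_{i1}+\gamma_i}$ together with the additional restriction $\tau\in\R_+$; the second constraint group, the coefficient $\eps^2\lambda$, and all remaining blocks are untouched because $\tau$ does not occur in them. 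I expect the only step that is not a mechanical transcription — and hence the one genuinely worth writing out — to be the justification that the infimum over $\tau$ may be restricted to $\tau\ge0$, since both the nested-max identity and, with it, the reduction to Theorem~\ref{thm:Quad} fail for negative $\tau$.
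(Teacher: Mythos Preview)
Your proposal is correct and matches the paper's intended argument, which it omits with the remark that it ``largely parallels that of Theorem~\ref{thm:Quad}.'' The statement of the corollary itself (the restriction $\tau\in\R_+$ and the single additive shift by~$\delta$ in $\rho_{i1}$) is exactly the output of your threshold-shift reduction, so you have reconstructed the omitted proof faithfully; your explicit justification that the $\cvar$ infimum may be restricted to $\tau\ge 0$ and the nested-max identity are precisely the two observations needed to make the parallel rigorous.
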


{The distributionally robust inverse optimization problem~\eqref{DRO-inv-opt} also admits a safe convex approximation when the search space consists of a class of convex hypotheses of the type considered in~\cite{keshavarz2011imputing}. Due to space restrictions we relegate this discussion to Appendix~\ref{app:convex}.}

\section{Numerical Experiments}
\label{sec:sim}

{
We now compare the proposed distributionally robust approach to inverse optimization against the state-of-the-art techniques described in~\cite{aswani2015inverse} and \cite{bertsimas2012inverse}. The first experiment aims to learn a linear hypothesis from imperfect training samples, where the imperfection is explained by measurement noise or the agent's bounded rationality. Similarly, the second experiment endeavors to learn a quadratic hypothesis from imperfect training samples, where the imperfection is explained by measurement noise or model uncertainty. 

All experiments are run on an Intel XEON CPU with 3.40GHz clock speed and 16GB of RAM. All linear, quadratic and second-order cone programs are solved with CPLEX~12.6, and all semidefinite programs are solved with MOSEK~8. 
In order to ensure that our experiments are reproducible, we make the underlying source codes available at \url{https://github.com/sorooshafiee/InverseOptimization}. 


\subsection{Learning a Linear Hypothesis}
\label{subsec:linear}
The goal of this experiment is to learn a linear hypothesis from imperfect training samples where the measured responses represent feasible perturbations of the true optimal responses. The perturbations can be explained by measurement noise or by the agent's bounded rationality. We argue that different causes of the perturbations necessitate different inverse optimization models.

\subsubsection{Consistent Noisy Measurements} 
\label{subsubsec:meas}
{\bf Decision problem of the agent:} We assume that the agent's true objective function is $F(s,x)=\inner{\theta\opt}{x}$ for some~$\theta\opt$ in the vicinity of a nominal model~$\theta_0$. The nominal model is sampled uniformly from $\Theta_0\Let \big\{\theta \in \R^n: \|\theta_0\|_\infty \in [1,5]\big\}$, while $\theta\opt$ is sampled uniformly form $\Theta \Let \big\{\theta \in \R^n : \|\theta-\theta_0\|_\infty \le 1\big\}$. This construction implies that $\theta\opt\neq 0$ almost surely. We also assume that the agent's feasible set is given by $\X(s)=\{x\in\R^n: \| x \|_\infty\leq 1,~ A x \geq s\}$, where the constraint matrix $A$ is sampled uniformly from the hypercube $[-1,1]^{m\times n}$. This feasible set can be brought to the standard form of Assumption~\ref{As:conic} by setting
\[
	W=(\mathbb I, -\mathbb I, A\tr)\tr \in\R^{(2n+m)\times n},~ H=(0,0,\mathbb I)\tr\in\R^{(2n+m)\times m},~ h=(-1,-1,0)\tr\in\R^{2n+m} \text{ and } \mathcal K=\R_+^{2n+m}.
\]
For a fixed signal $s$, we denote the optimal value of the agent's true decision problem by $z\opt(s)$.

{\bf Generation of training samples:} A single signal is constructed as $s = Av_1$, where the auxiliary random variable $v_1$ follows independent uniform distribution on $[-1,1]^n$. Thus, if $a_i$, $i\le m$, denote the rows of the constraint matrix $A$, then the support of $s$ can be expressed as $\S=\{s\in\R^m: |s_i|\leq \|a_i\|_1~\forall i\leq m\}$. This support can be brought to the standard form of Assumption~\ref{As:conic} by setting
\[
	C=(\mathbb I,-\mathbb I)\tr\in\R^{2m\times m},~d=(\|a_1\|_1,\ldots,\|a_m\|_1,\|a_1\|_1,\ldots,\|a_m\|_1)\tr\in\R^{2m} \text{ and } \mathcal C=\R_+^{2m}.
\]
As $v_1\in\X(s)$ by construction, problem~\eqref{opt_basic} is feasible for every $s\in\S$. We assume that the agent's response to $s$ is noisy in the sense that it constitutes a random $\delta$-suboptimal solution to~\eqref{opt_basic} for $\delta=1$. Specifically, we assume that $x$ is obtained as a solution of an auxiliary optimization problem
\[
	\min_{x\in\X(s)} \left\{ \inner{\theta_{\rm rand}}{x} :   \inner{\theta^\star}{x} \leq z\opt(s) + \delta \right\},
\]
which minimizes a linear cost over the set of all $\delta$-suboptimal solutions to~\eqref{opt_basic}. The gradient $\theta_{\rm rand}$ of the cost is sampled uniformly from $[-1,1]^n$. By construction, we have $x\in\X(s)$, which implies that $(s,x)\in\Xi$. Thus, the measurement noise is consistent with the know support of the exact signal-response pairs. The imperfect consistent training samples $(\sd_i, \xd_i)$, $i\le N$, are now generated independently using the above procedure.

{\bf Decision problem of the observer:} The observer aims to identify a linear hypothesis $F_\theta(s,x) \Let \inner{\theta}{x}$, $\theta\in\Theta$, that best predicts the agent's responses to new signals, where the search space $\Theta$ is set to the $\infty$-norm ball around the nominal model $\theta_0$ described above. We assume that the observer minimizes the suboptimality loss~\eqref{loss} and uses the expected value to measure risk. Moreover, the observer solves the distributionally robust inverse optimization problem~\eqref{DRO-inv-opt} over a $1$-Wasserstein ball around the empirical distribution on the training samples, where the $\infty$-norm is used as the transportation cost on $\Xi$. Note that this problem can be reformulated as the tractable linear program \eqref{cvar-lin} by virtue of Theorem~\ref{thm:lin}.

{\bf Out-of-sample risk:} To assess the quality of an estimator $\wh \theta_N$ obtained from~\eqref{cvar-lin}, we evaluate its out-of-sample risk $\EE^{\PP_{\rm out}} (\ell_{\wh \theta_N})$ both with respect to the predictability loss~\eqref{loss_predictability} and the suboptimality loss~\eqref{loss}, where $\P_{\rm out}$ represents the distribution of a single test sample $(s,x)$ independent of the training samples, and where $x$ is an exact (non-noisy) response to $s$ in~\eqref{opt_basic}. In practice, the out-of-sample risk cannot be evaluated analytically but only numerically by using 1{,}000 independent test samples from $\P_{\rm out}$. By relying on non-noisy test samples, we assess how well the estimator $\wh \theta_N$ can predict the agent's {\em exact} optimal responses.

\begin{figure*} [t!]
	\centering
	\hspace{-6pt} \subfigure[Impact of the Wasserstein radius on the suboptimality and predictibility risk]{\label{fig:meas:index} \includegraphics[width=0.32\columnwidth]{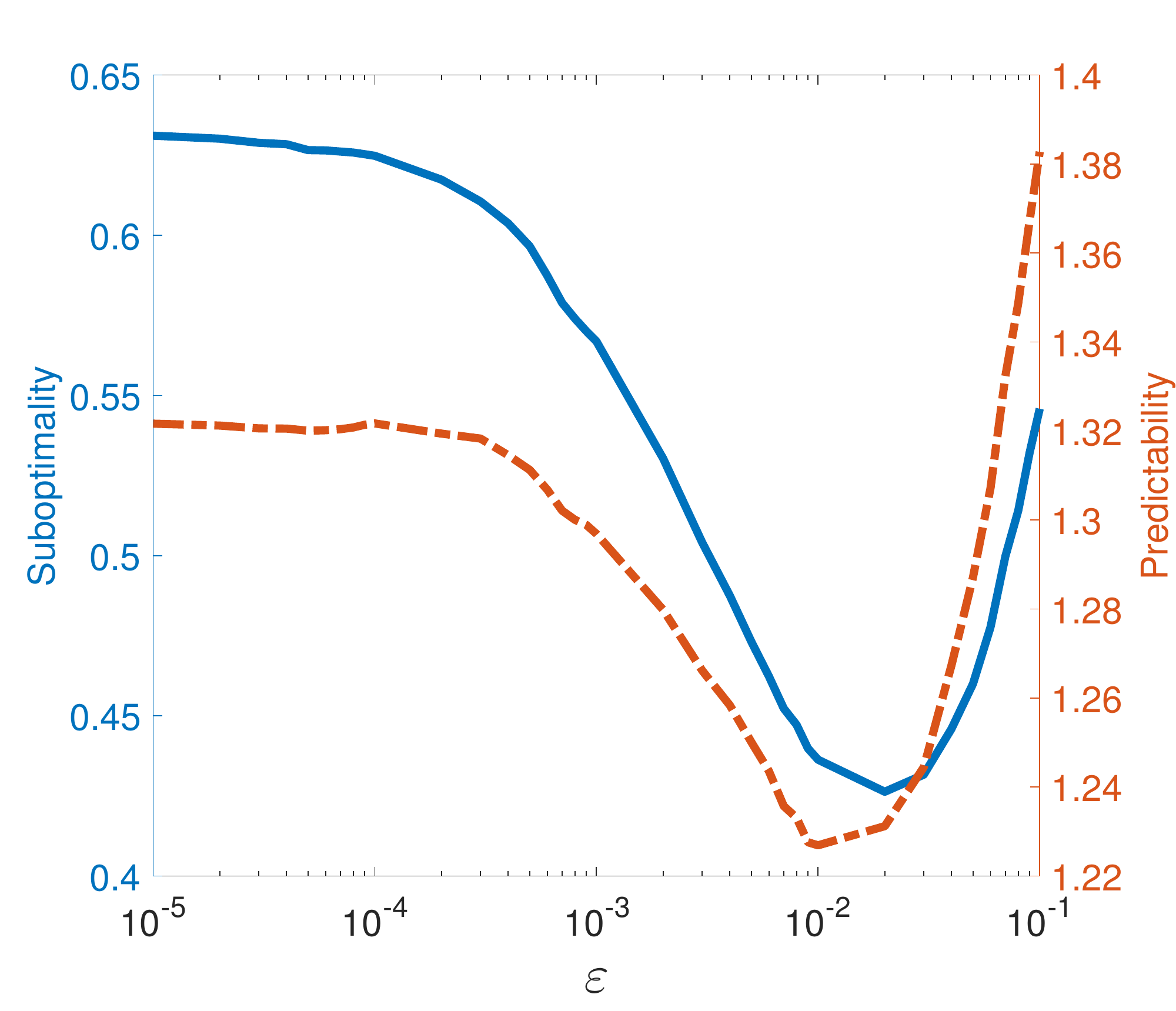}}
	\subfigure[Suboptimality learning curve]{\label{fig:meas:learn:sub} \includegraphics[width=0.32\columnwidth]{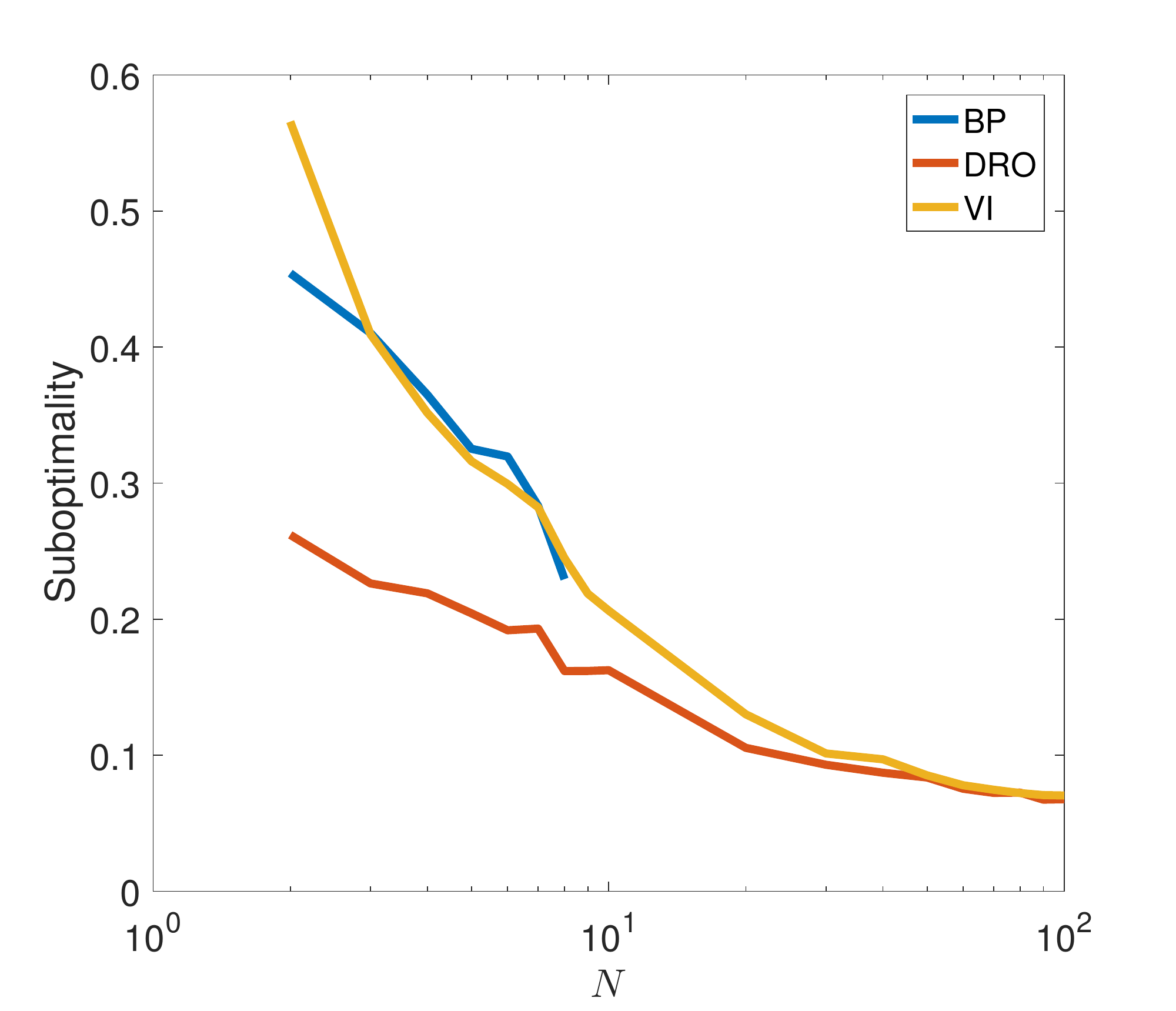}} 
	\subfigure[Predictibility learning curve]{\label{fig:meas:learn:pred} \includegraphics[width=0.32\columnwidth]{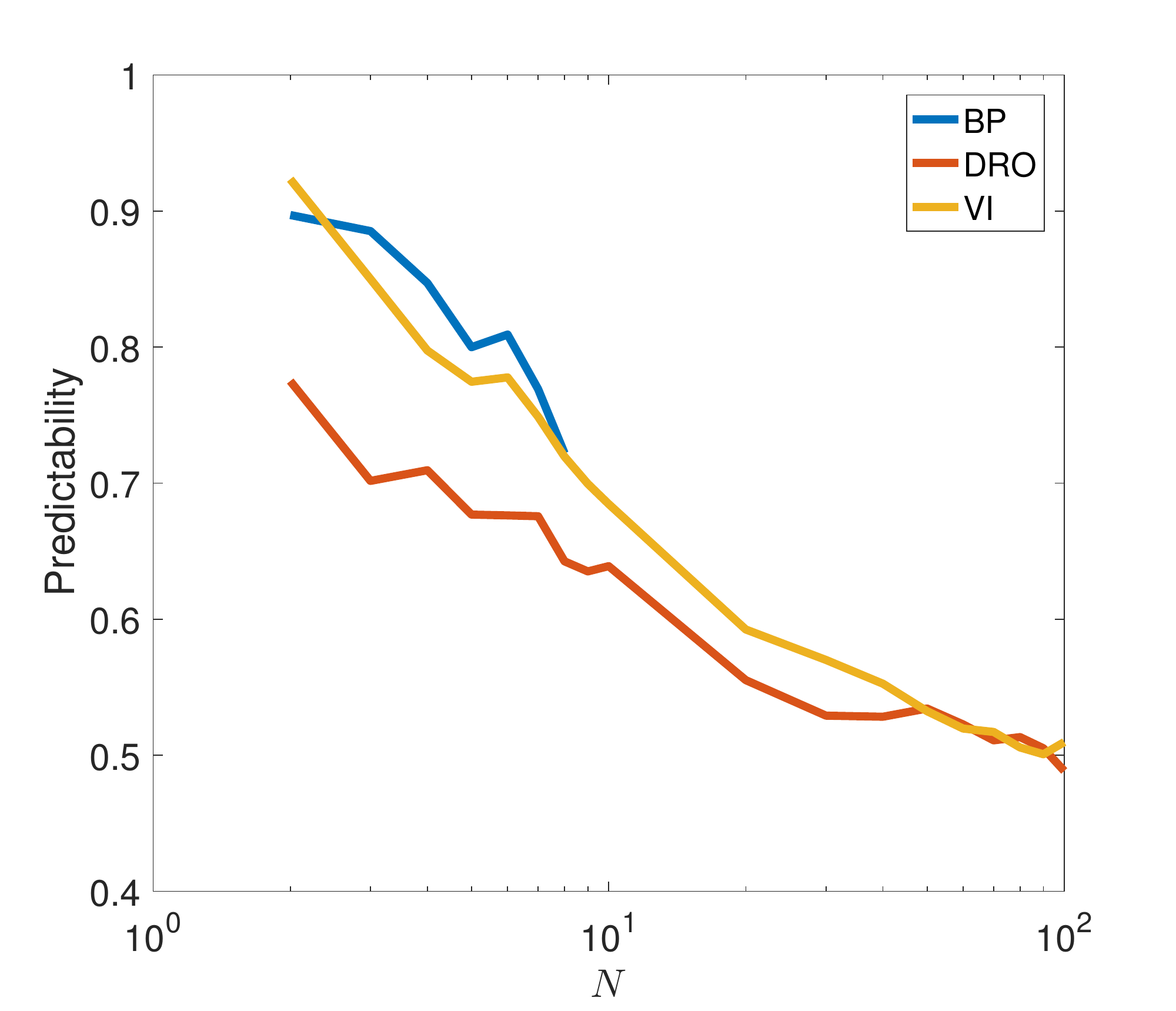}} 
	\caption{Out-of-sample suboptimality and predictability risks in the presence of imperfect consistent training samples and perfect test samples}
	\label{fig:meas}
\end{figure*}

{\bf Results:} All numerical results are averaged across $100$ independent problems instances $\{\theta_0,\theta\opt,A\}$. The first experiment involves $m=50$ signal variables, $n=50$ decision variables and $N=10$ training samples. Figure~\ref{fig:meas:index} shows how the out-of-sample risk of the optimal estimator $\wh \theta_N(\eps)$ obtained from~\eqref{cvar-lin} changes with the radius $\eps$ of the underlying Wasserstein ball. This experiment suggests that both the predictability and suboptimality risks can be significantly reduced by using a distributionally robust inverse optimization model with a judiciously calibrated ambiguity set. Unfortunately, Figure~\ref{fig:meas:index}, from which the optimal Wasserstein radii could be read off easily, is not available in the training phase as its construction requires large amounts of test samples. Instead, the Wasserstein radii offering the lowest out-of-sample risk must also be estimated from the training data. To this end, we use the following {\em $k$-fold cross validation} algorithm: 

Partition $\xid_1,\ldots,\xid_N$ into $k$ folds, and repeat the following procedure for each fold $i=1,\ldots,k$. Use the $i$-th fold as a validation dataset and merge the remaining $k-1$ folds to a training dataset of size $N_i$. Using only the $i$-th training dataset, solve~\eqref{cvar-lin} for a large but finite number of candidate radii $\eps\in\mathcal E$ to obtain an estimator $\wh \theta_{N_i}(\eps)$. Use the $i$-th validation dataset to estimate the out-of-sample risk of $\wh \theta_{N_i}(\eps)$ for each $\eps \in\mathcal E$. Set $\wh \eps_N^i$ to any $\eps\in\mathcal E$ that minimizes this quantity. After all folds have been processed, set $\wh \eps_N$ to the average of the $\wh \eps_N^i$ for $i=1,\ldots,k$, and re-solve~\eqref{cvar-lin} with $\eps=\wh \eps_N$ using all $N$ samples. Report the optimal solution $\wh\theta_N$ and the optimal value $\wh J_N$ of \eqref{cvar-lin} as the recommended estimator and its corresponding certificate, respectively. Throughout all experiments we set $k = \min \{ 5, N\}$ and $\mathcal E=\{\eps=b\cdot 10^c:b\in\{1,5\},\; c\in\{-4,-3,-2,-1\}\}$. 

For brevity, we henceforth refer to the above cross-validation scheme as the {\em distributionally robust optimization} (DRO) approach. In the following, we compare the resulting DRO estimator against two state-of-the-art estimators from the literature. The first estimator is obtained from the {\em variational inequality} (VI) approach proposed in~\cite{bertsimas2012inverse}, which minimizes the first-order loss~\eqref{loss_bertsimas} averaged across all training samples. By \cite[Theorem~3]{bertsimas2012inverse}, the resulting inverse optimization problem is equivalent to the tractable linear program
\begin{align*}
\begin{array}{lll}
\text{\rm minimize} & \displaystyle \frac{1}{N}\sum\limits_{i = 1}^{N} |r_i| & \\
\text{\rm subject to} & \inner{W \xd_i - H \sd_i - h}{\gamma_i} \leq r_i & \forall i \leq N \\
& W^\top \gamma_i = \theta & \forall i \leq N \\
& \gamma_i \ge 0 & \forall i \leq N \\ 
& \theta \in \Theta. & \\
\end{array}
\end{align*}
Note that as all training samples are consistent, that is, $(\sd_i,\xd_i)\in\Xi$ for $i=1,\ldots,n$, one can show that all residuals $r_i$ are automatically non-negative, and the above linear program can be viewed as a special instance of \eqref{DRO-inv-opt} that minimizes the first-order loss, where the risk measure is set to the expected value and the Wasserstein radius is set to zero. If there were inconsistent training samples that fall outside of $\Xi$, on the other hand, the absolute values of the residuals would be needed to prevent unboundedness.

The second estimator is obtained from the {\em bilevel programming} (BP) approach proposed in \cite{aswani2015inverse}, which minimizes the predictability loss~\eqref{loss_predictability} averaged across all training samples. As shown in~\cite[Section~2.2]{aswani2015inverse}, the resulting inverse optimization problem is equivalent to the optimistic bilevel program
\begin{align*}
\begin{array}{ll}
\text{\rm minimize} & \displaystyle \frac{1}{N} \sum\limits_{i=1}^N \|\xd_i - y_i\|_2^2 \\ 
\text{\rm subject to} & y_i \in \arg\min\limits_{z} \Big\{ \inner{z}{\theta} ~:~ Wz \ge H\sd_i + h \Big\} \\
& \theta \in \Theta.
\end{array}
\end{align*}
Note that this bilevel program can be viewed as a special instance of \eqref{DRO-inv-opt} that minimizes the predictability loss, where the risk measure is set to the expected value and the Wasserstein radius is set to zero.

The above bilevel program can be reformulated as a mixed integer quadratic program by replacing the `$\arg\min$'-constraint with the Karush-Kuhn-Tucker optimality conditions of the agent's linear program. Indeed, note that the resulting complementary slackness conditions can be linearized by introducing binary variables that identify the binding constraints. However, this approach requires big-$M$ constants to bound the optimal dual variables. As valid big-$M$ constants are difficult to obtain in general and as overly conservative big-$M$ constants lead to numerical instability, we use here the YALMIP interface for bilevel programming, which calls a dedicated branch and bound algorithm that branches directly on the complementarity slackness conditions~\cite{ref:YALMIP}. Throughout our experiments we limit all branch and bound calculations to $2{,}000$ iterations.

Figures~\ref{fig:meas:learn:sub} and \ref{fig:meas:learn:pred} visualize the suboptimality and predictability {\em learning curves}, respectively, which capture how the out-of-sample risk of the different estimators changes with the number of training samples. As optimistic bilevel programs are NP-hard even when all objective and constraint functions are linear~\cite{ref:Ben-Ayed-90}, this experiment focuses on problem instances with $n=m=10$. Even for these moderate problem dimensions, however, the inverse optimization problems associated with the BP approach fails to find a feasible solution for more than 10 training samples. In contrast, all other approaches lead to tractable linear programs. Figure~\ref{fig:meas:learn:sub} shows that the DRO approach dominates the VI and BP approaches uniformly across all samples sizes in terms of out-of-sample suboptimality risk. This is reassuring as the DRO approach actually minimizes suboptimality risk. However, Figure~\ref{fig:meas:learn:pred} suggests that the DRO approach wins even in terms of out-of-sample predictability risk. This is perhaps surprising because, unlike the BP approach, the DRO approach only minimizes an approximation of the predictability loss. We conclude that injecting distributional robustness may be more beneficial for out-of-sample performance than using the correct loss function.

Tables~\ref{table:subopt:imp} and \ref{table:pred:imp} report the out-of-sample suboptimality and predictability risks, respectively, for the DRO and VI estimators based on $N=10$ training samples and for different signal and response dimensions. The BP approach is excluded from this comparison due to its intractability. We observe that the DRO estimator always attains the lowest suboptimality risk and often the lowest predictability risk. Whenever the VI estimator wins, the predictability risks of the VI and DRO estimators are in fact almost identical.

\begin{table}
	\centering
	\caption{Out-of-sample suboptimality risk in the presence of noisy consistent measurements}
	\label{table:subopt:imp}
	\begin{tabular}{cclllll}\hline 
		& & \multicolumn{5}{c}{$m$} \\ \cline{3-7} 
		$n$ & Methods & 10 & 20 & 30 & 40 & 50  \\ \noalign{\vskip 1pt} \hline \noalign{\vskip 1pt} 
		\multirow{3}{*}{10} & VI & $1.4\times 10^{-1} $& $1.8\times 10^{-1} $& $1.4\times 10^{-1} $& $1.2\times 10^{-1} $& $1.3\times 10^{-1} $ \\ 
		& DRO & \cellcolor{gray!25}{$1.2\times 10^{-1}$}& \cellcolor{gray!25}{$1.3\times 10^{-1}$}& \cellcolor{gray!25}{$1.1\times 10^{-1}$}& \cellcolor{gray!25}{$8.9\times 10^{-2}$}& \cellcolor{gray!25}{$9.5\times 10^{-2}$} \\ \noalign{\vskip 1pt} \hline \noalign{\vskip 1pt} 
		\multirow{3}{*}{20} & VI & $3.0\times 10^{-1} $& $3.5\times 10^{-1} $& $3.2\times 10^{-1} $& $2.5\times 10^{-1} $& $2.4\times 10^{-1} $ \\ 
		& DRO & \cellcolor{gray!25}{$2.4\times 10^{-1}$}& \cellcolor{gray!25}{$2.9\times 10^{-1}$}& \cellcolor{gray!25}{$2.4\times 10^{-1}$}& \cellcolor{gray!25}{$1.9\times 10^{-1}$}& \cellcolor{gray!25}{$1.9\times 10^{-1}$} \\ \noalign{\vskip 1pt} \hline \noalign{\vskip 1pt} 
		\multirow{3}{*}{30} & VI & $3.8\times 10^{-1} $& $4.4\times 10^{-1} $& $4.6\times 10^{-1} $& $4.2\times 10^{-1} $& $3.6\times 10^{-1} $ \\ 
		& DRO & \cellcolor{gray!25}{$3.2\times 10^{-1}$}& \cellcolor{gray!25}{$3.7\times 10^{-1}$}& \cellcolor{gray!25}{$3.7\times 10^{-1}$}& \cellcolor{gray!25}{$3.3\times 10^{-1}$}& \cellcolor{gray!25}{$3.1\times 10^{-1}$} \\ \noalign{\vskip 1pt} \hline \noalign{\vskip 1pt} 
		\multirow{3}{*}{40} & VI & $4.4\times 10^{-1} $& $5.5\times 10^{-1} $& $6.0\times 10^{-1} $& $5.4\times 10^{-1} $& $5.1\times 10^{-1} $ \\ 
		& DRO & \cellcolor{gray!25}{$3.7\times 10^{-1}$}& \cellcolor{gray!25}{$4.5\times 10^{-1}$}& \cellcolor{gray!25}{$4.6\times 10^{-1}$}& \cellcolor{gray!25}{$4.4\times 10^{-1}$}& \cellcolor{gray!25}{$4.5\times 10^{-1}$} \\ \noalign{\vskip 1pt} \hline \noalign{\vskip 1pt} 
		\multirow{3}{*}{50} & VI & $5.0\times 10^{-1} $& $6.4\times 10^{-1} $& $6.5\times 10^{-1} $& $7.1\times 10^{-1} $& $6.2\times 10^{-1} $ \\
		& DRO & \cellcolor{gray!25}{$4.1\times 10^{-1}$}& \cellcolor{gray!25}{$5.1\times 10^{-1}$}& \cellcolor{gray!25}{$5.4\times 10^{-1}$}& \cellcolor{gray!25}{$5.6\times 10^{-1}$}& \cellcolor{gray!25}{$5.4\times 10^{-1}$} \\ \noalign{\vskip 1pt} \hline \noalign{\vskip 1pt} 
	\end{tabular} 
\end{table}

\begin{table}
	\centering 
	\caption{Out-of-sample predictability risk in the presence of consistent noisy measurements}
	\label{table:pred:imp}
	\begin{tabular}{cclllll}\noalign{\vskip 1pt} \hline \noalign{\vskip 1pt} 
		& & \multicolumn{5}{c}{$m$} \\ \cline{3-7} 
		$n$ & Methods & 10 & 20 & 30 & 40 & 50  \\ \noalign{\vskip 1pt} \hline \noalign{\vskip 1pt} 
		\multirow{3}{*}{10} & VI & $6.1\times 10^{-1} $& $4.6\times 10^{-1} $& $3.4\times 10^{-1} $& $2.6\times 10^{-1} $& $2.4\times 10^{-1} $ \\ 
		& DRO & \cellcolor{gray!25}{$5.6\times 10^{-1}$}& \cellcolor{gray!25}{$4.2\times 10^{-1}$}& \cellcolor{gray!25}{$3.2\times 10^{-1}$}& \cellcolor{gray!25}{$2.4\times 10^{-1}$}& \cellcolor{gray!25}{$2.2\times 10^{-1}$} \\ \noalign{\vskip 1pt} \hline \noalign{\vskip 1pt} 
		\multirow{3}{*}{20} & VI & $1.1\times 10^{0} $& $9.1\times 10^{-1} $& $7.6\times 10^{-1} $& $6.1\times 10^{-1} $& $5.1\times 10^{-1} $ \\ 
		& DRO & \cellcolor{gray!25}{$1.0\times 10^{0}$}& \cellcolor{gray!25}{$9.0\times 10^{-1}$}& \cellcolor{gray!25}{$7.2\times 10^{-1}$}& \cellcolor{gray!25}{$5.9\times 10^{-1}$}& \cellcolor{gray!25}{$4.8\times 10^{-1}$} \\ \noalign{\vskip 1pt} \hline \noalign{\vskip 1pt} 
		\multirow{3}{*}{30} & VI & \cellcolor{gray!25}{$1.4\times 10^{0}$}& $1.2\times 10^{0} $& $1.1\times 10^{0} $& $9.7\times 10^{-1} $& $8.5\times 10^{-1} $ \\ 
		& DRO & $1.4\times 10^{0} $& \cellcolor{gray!25}{$1.2\times 10^{0}$}& \cellcolor{gray!25}{$1.1\times 10^{0}$}& \cellcolor{gray!25}{$9.5\times 10^{-1}$}& \cellcolor{gray!25}{$8.4\times 10^{-1}$} \\ \noalign{\vskip 1pt} \hline \noalign{\vskip 1pt} 
		\multirow{3}{*}{40} & VI & \cellcolor{gray!25}{$1.6\times 10^{0}$}& \cellcolor{gray!25}{$1.5\times 10^{0}$}& $1.4\times 10^{0} $& \cellcolor{gray!25}{$1.3\times 10^{0}$}& \cellcolor{gray!25}{$1.2\times 10^{0}$} \\ 
		& DRO & $1.6\times 10^{0} $& $1.5\times 10^{0} $& \cellcolor{gray!25}{$1.4\times 10^{0}$}& $1.3\times 10^{0} $& $1.2\times 10^{0} $ \\ \noalign{\vskip 1pt} \hline \noalign{\vskip 1pt} 
		\multirow{3}{*}{50} & VI & $1.8\times 10^{0} $& \cellcolor{gray!25}{$1.7\times 10^{0}$}& \cellcolor{gray!25}{$1.6\times 10^{0}$}& $1.6\times 10^{0} $& \cellcolor{gray!25}{$1.4\times 10^{0}$} \\ 
		& DRO & \cellcolor{gray!25}{$1.8\times 10^{0}$}& $1.7\times 10^{0} $& $1.6\times 10^{0} $& \cellcolor{gray!25}{$1.6\times 10^{0}$}& $1.4\times 10^{0} $ \\ \noalign{\vskip 1pt} \hline \noalign{\vskip 1pt} 
	\end{tabular} 
\end{table}

\begin{figure*}[t!]
	\centering
	\hspace{-6pt} \subfigure[Impact of the Wasserstein radius on the suboptimality and predictibility risk]{\label{fig:bdd:index} \includegraphics[width=0.32\columnwidth]{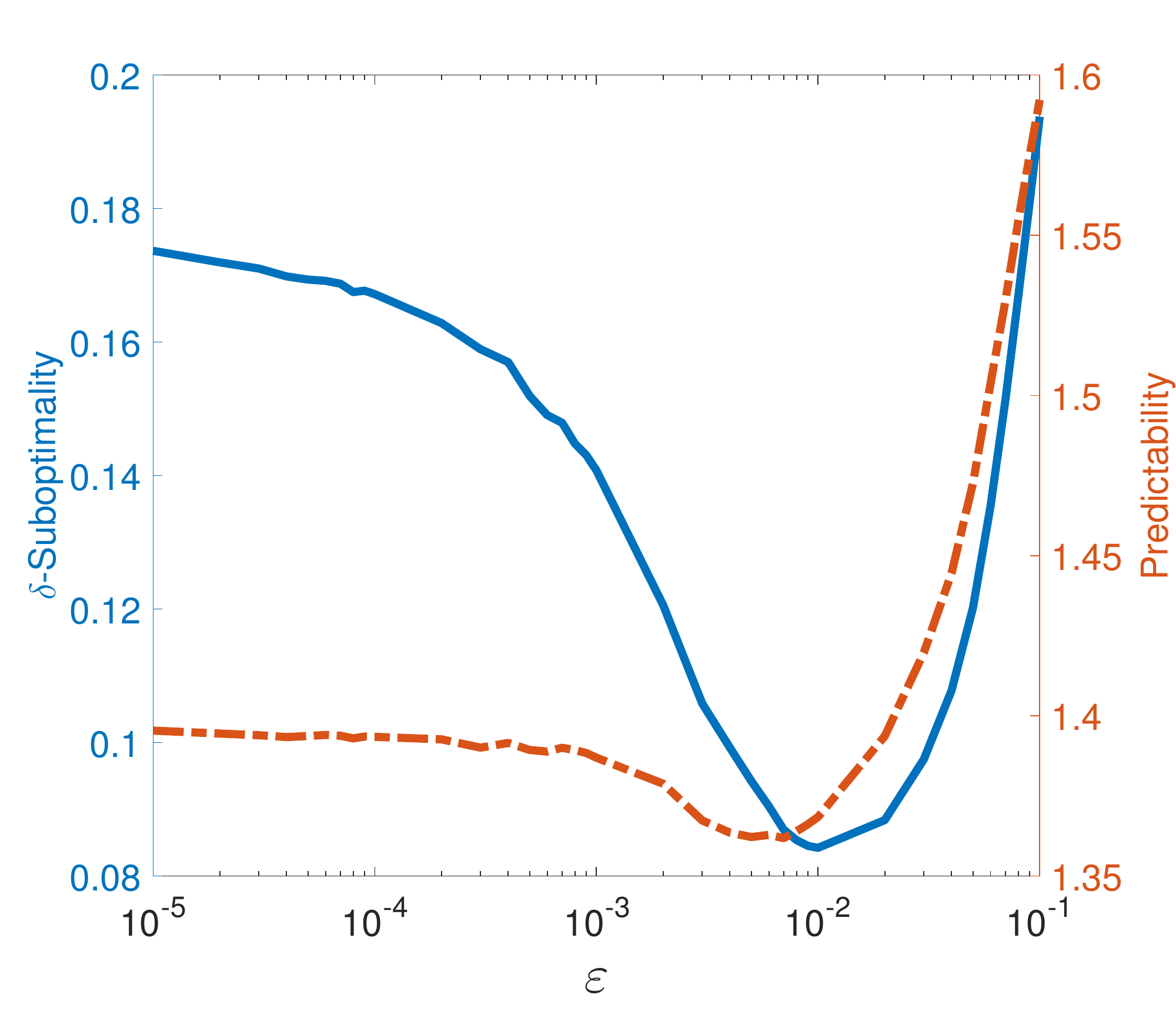}}
	\subfigure[Suboptimality learning curve]{\label{fig:bdd:learn:sub} \includegraphics[width=0.32\columnwidth]{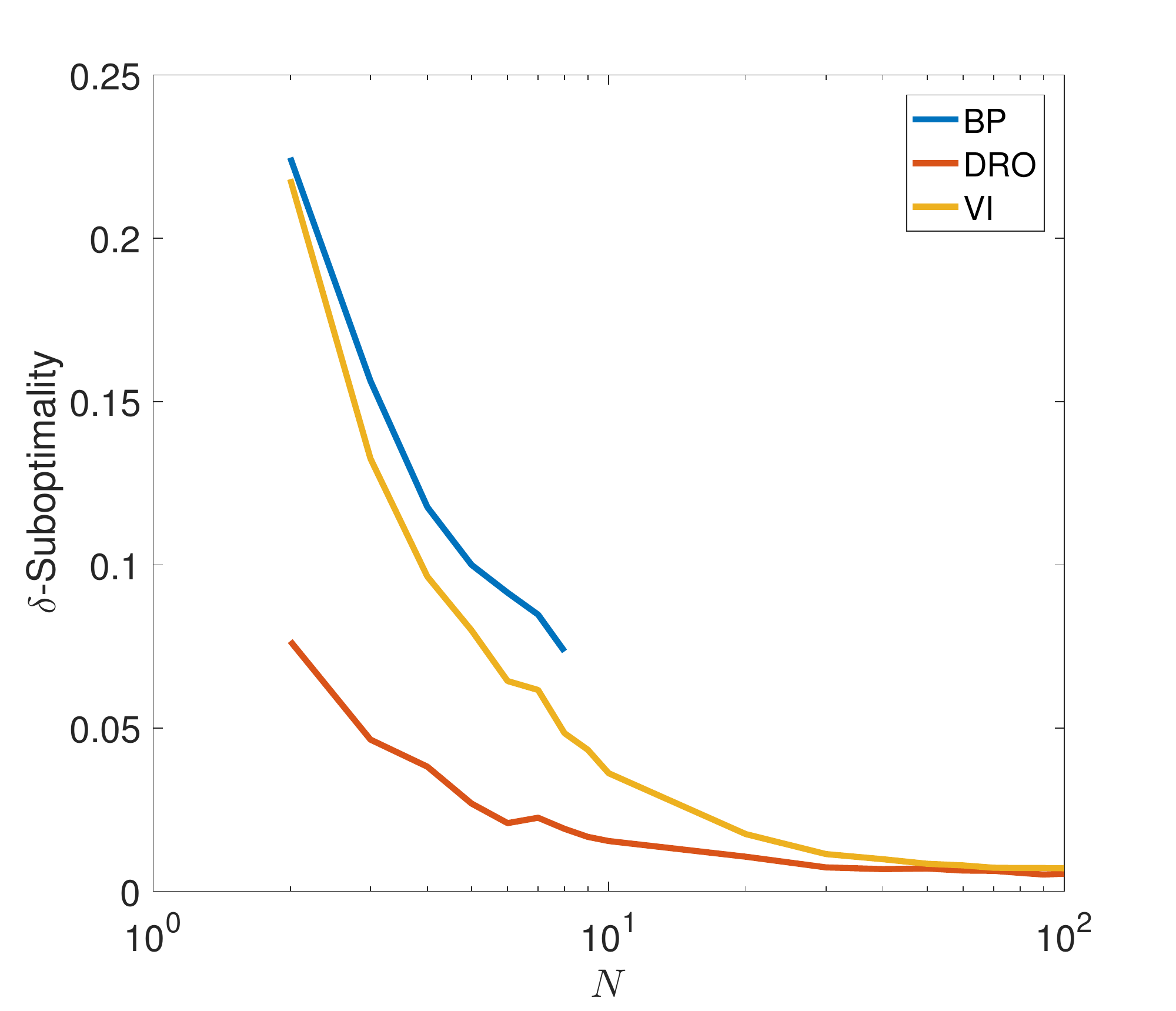}}
	\subfigure[Predictibility learning curve]{\label{fig:bdd:learn:pred} \includegraphics[width=0.32\columnwidth]{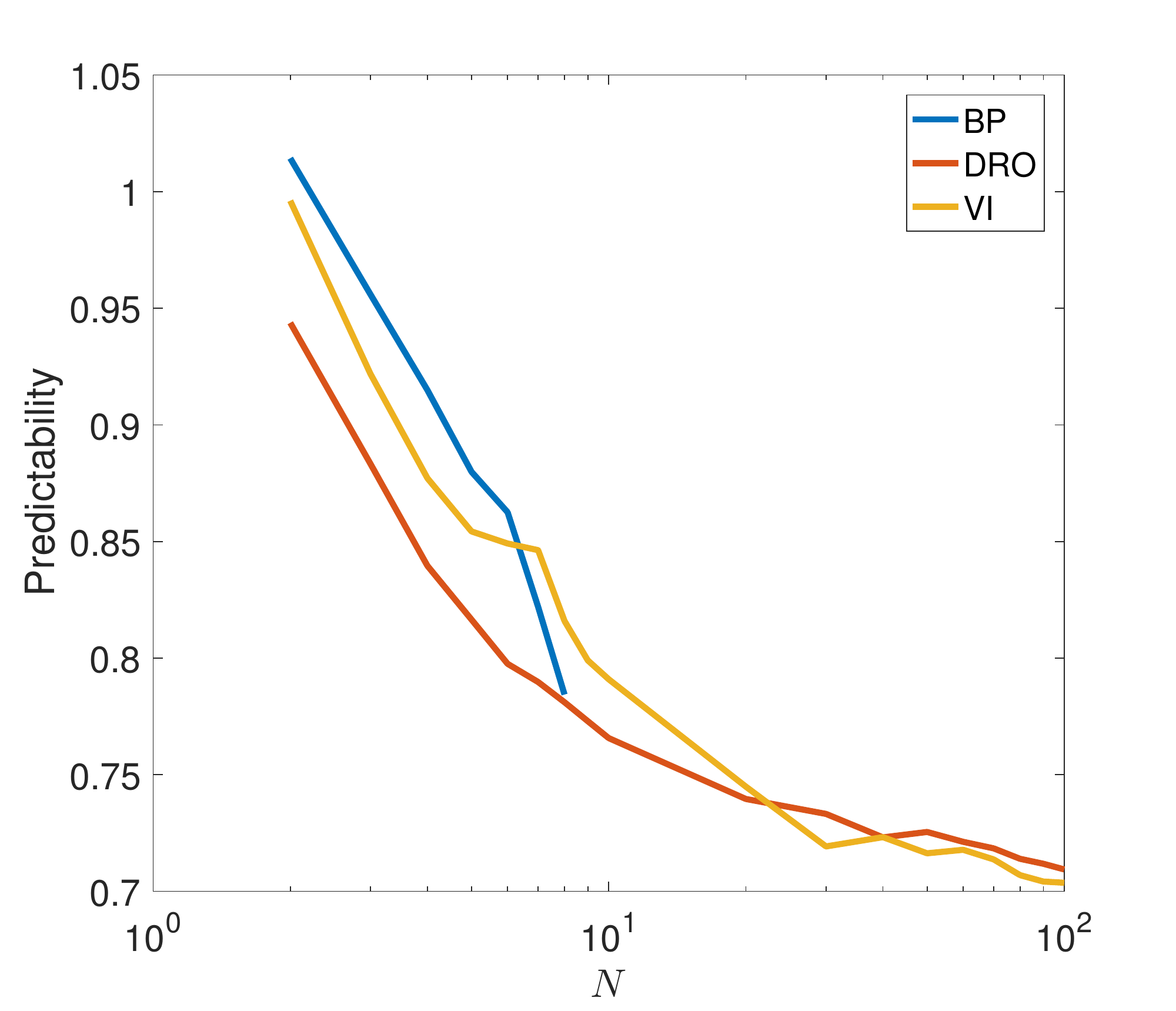}} 
	\caption{Out-of-sample suboptimality and predictability risks in the presence of imperfect consistent training and test samples}
	\label{fig:bdd}
\end{figure*}

\subsubsection{Bounded Rationality}
\label{subsubsec:bdd-ration}
Consider the exact same setting as in Section~\ref{subsubsec:meas} but assume now that the agent suffers from bounded rationality. Specifically, assume that the agent selects random $\delta$-suboptimal decisions that are perfectly measured by the observer. This means that the training samples are generated as in Section~\ref{subsubsec:meas}, but the imperfection of the training responses is now explained by the agent's bounded rationality instead of the observer's noisy measurements. At this point one may wonder why a new interpretation of the imperfections should impact the observer's inverse optimization problem. In the following we will assume, however, that the observer is aware of the agent's bounded rationality, knows the value of $\delta$ and aims to predict the agent's actual suboptimal decisions. Therefore, the DRO approach to inverse optimization is subject to two changes:
\begin{itemize}
	\item The observer minimizes the bounded rationality loss~\eqref{bounded-ratioinality-loss} instead of the suboptimality loss \eqref{loss}. 

	\item The test samples are generated in the same way as the training samples. Thus, the test responses no longer constitute perfect minimizers of~\eqref{opt_basic} but represent random $\delta$-suboptimal solutions. 
	
\end{itemize}
Recall that the bounded rationality loss favors hypotheses that correctly predict $\delta$-suboptimal responses. We also highlight that the observer's inverse optimization problem with bounded rationality loss can be reformulated as a tractable linear program by virtue of Corollary~\ref{cor:lin:bdd-r}. Moreover, by using imperfect test samples, the out-of-sample risk is now measured under the  distribution of an {\em imperfect} signal-response pair, which is the correct performance criterion given that the observer aims to predict imperfect responses.

In analogy to Section~\ref{subsubsec:meas}, the impact of the Wasserstein radius on the out-of-sample suboptimality and predictability risk is shown in Figure~\ref{fig:bdd:index}. The suboptimality and predictability learning curves of different estimators are visualized in Figures~\ref{fig:bdd:learn:sub} and \ref{fig:bdd:learn:pred}, respectively. Here, the DRO estimator is defined in terms of the bounded rationality loss, while the VI and BP estimators are computed as in Section~\ref{subsubsec:meas} and are thus not corrected for the agent's bounded rationality. The impact of the signal and response dimensions on the out-of-sample suboptimality and predictability risk are reported in Tables~\ref{table:subopt:bounded} and~\ref{table:pred:bounded}, respectively. Unless otherwise stated, all experiments are parameterized exactly as in Section~\ref{subsubsec:meas}. Here, the DRO estimator systematically attains the lowest suboptimality risk, while the VI estimator almost always wins in terms of predictability risk, even though only by a small margin.

\begin{table}
	\centering 
	\caption{Out-of-sample suboptimality risk in the presence of bounded rationality}
	\label{table:subopt:bounded}
	\begin{tabular}{cclllll}\hline 
		& & \multicolumn{5}{c}{$m$} \\ \cline{3-7} 
		$n$ & Methods & 10 & 20 & 30 & 40 & 50  \\ \noalign{\vskip 1pt} \hline \noalign{\vskip 1pt} 
		\multirow{3}{*}{10} & VI & $4.3\times 10^{-2} $& $2.5\times 10^{-2} $& $1.1\times 10^{-2} $& $5.7\times 10^{-3} $& $4.5\times 10^{-3} $ \\ 
		& DRO & \cellcolor{gray!25}{$2.5\times 10^{-2}$}& \cellcolor{gray!25}{$1.1\times 10^{-2}$}& \cellcolor{gray!25}{$6.4\times 10^{-3}$}& \cellcolor{gray!25}{$2.9\times 10^{-3}$}& \cellcolor{gray!25}{$1.8\times 10^{-3}$} \\ \noalign{\vskip 1pt} \hline \noalign{\vskip 1pt} 
		\multirow{3}{*}{20} & VI & $7.7\times 10^{-2} $& $7.8\times 10^{-2} $& $6.3\times 10^{-2} $& $3.7\times 10^{-2} $& $1.8\times 10^{-2} $ \\ 
		& DRO & \cellcolor{gray!25}{$5.3\times 10^{-2}$}& \cellcolor{gray!25}{$4.4\times 10^{-2}$}& \cellcolor{gray!25}{$3.4\times 10^{-2}$}& \cellcolor{gray!25}{$2.1\times 10^{-2}$}& \cellcolor{gray!25}{$9.2\times 10^{-3}$} \\ \noalign{\vskip 1pt} \hline \noalign{\vskip 1pt} 
		\multirow{3}{*}{30} & VI & $1.2\times 10^{-1} $& $1.3\times 10^{-1} $& $1.2\times 10^{-1} $& $8.9\times 10^{-2} $& $6.4\times 10^{-2} $ \\ 
		& DRO & \cellcolor{gray!25}{$7.9\times 10^{-2}$}& \cellcolor{gray!25}{$7.8\times 10^{-2}$}& \cellcolor{gray!25}{$6.9\times 10^{-2}$}& \cellcolor{gray!25}{$5.2\times 10^{-2}$}& \cellcolor{gray!25}{$3.9\times 10^{-2}$} \\ \noalign{\vskip 1pt} \hline \noalign{\vskip 1pt} 
		\multirow{3}{*}{40} & VI & $1.4\times 10^{-1} $& $1.9\times 10^{-1} $& $1.7\times 10^{-1} $& $1.4\times 10^{-1} $& $1.2\times 10^{-1} $ \\ 
		& DRO & \cellcolor{gray!25}{$9.7\times 10^{-2}$}& \cellcolor{gray!25}{$1.2\times 10^{-1}$}& \cellcolor{gray!25}{$1.1\times 10^{-1}$}& \cellcolor{gray!25}{$9.5\times 10^{-2}$}& \cellcolor{gray!25}{$8.2\times 10^{-2}$} \\ \noalign{\vskip 1pt} \hline \noalign{\vskip 1pt} 
		\multirow{3}{*}{50} & VI & $1.9\times 10^{-1} $& $2.0\times 10^{-1} $& $2.1\times 10^{-1} $& $2.0\times 10^{-1} $& $1.7\times 10^{-1} $ \\ 
		& DRO & \cellcolor{gray!25}{$1.2\times 10^{-1}$}& \cellcolor{gray!25}{$1.3\times 10^{-1}$}& \cellcolor{gray!25}{$1.4\times 10^{-1}$}& \cellcolor{gray!25}{$1.4\times 10^{-1}$}& \cellcolor{gray!25}{$1.2\times 10^{-1}$} \\ \noalign{\vskip 1pt} \hline \noalign{\vskip 1pt} 
	\end{tabular} 
\end{table}

\begin{table}
	\centering 
	\caption{Out-of-sample predictability risk in the presence of bounded rationality}
	\label{table:pred:bounded}
	\begin{tabular}{cclllll}\hline 
		& & \multicolumn{5}{c}{$m$} \\ \cline{3-7} 
		$n$ & Methods & 10 & 20 & 30 & 40 & 50  \\ \noalign{\vskip 1pt} \hline \noalign{\vskip 1pt} 
		\multirow{3}{*}{10} & VI & $8.2\times 10^{-1} $& \cellcolor{gray!25}{$5.8\times 10^{-1}$}& \cellcolor{gray!25}{$4.5\times 10^{-1}$}& \cellcolor{gray!25}{$3.5\times 10^{-1}$}& \cellcolor{gray!25}{$3.2\times 10^{-1}$} \\ 
		& DRO & \cellcolor{gray!25}{$7.9\times 10^{-1}$}& $5.9\times 10^{-1} $& $4.6\times 10^{-1} $& $3.7\times 10^{-1} $& $3.4\times 10^{-1} $ \\ \noalign{\vskip 1pt} \hline \noalign{\vskip 1pt} 
		\multirow{3}{*}{20} & VI & \cellcolor{gray!25}{$1.2\times 10^{0}$}& \cellcolor{gray!25}{$1.1\times 10^{0}$}& \cellcolor{gray!25}{$8.9\times 10^{-1}$}& \cellcolor{gray!25}{$7.2\times 10^{-1}$}& \cellcolor{gray!25}{$5.8\times 10^{-1}$} \\ 
		& DRO & $1.2\times 10^{0} $& $1.1\times 10^{0} $& $9.2\times 10^{-1} $& $7.4\times 10^{-1} $& $6.0\times 10^{-1} $ \\ \noalign{\vskip 1pt} \hline \noalign{\vskip 1pt} 
		\multirow{3}{*}{30} & VI & \cellcolor{gray!25}{$1.5\times 10^{0}$}& \cellcolor{gray!25}{$1.4\times 10^{0}$}& \cellcolor{gray!25}{$1.2\times 10^{0}$}& \cellcolor{gray!25}{$1.1\times 10^{0}$}& \cellcolor{gray!25}{$9.4\times 10^{-1}$} \\
		& DRO & $1.5\times 10^{0} $& $1.4\times 10^{0} $& $1.3\times 10^{0} $& $1.1\times 10^{0} $& $9.6\times 10^{-1} $ \\ \noalign{\vskip 1pt} \hline \noalign{\vskip 1pt} 
		\multirow{3}{*}{40} & VI & \cellcolor{gray!25}{$1.7\times 10^{0}$}& \cellcolor{gray!25}{$1.6\times 10^{0}$}& \cellcolor{gray!25}{$1.5\times 10^{0}$}& \cellcolor{gray!25}{$1.4\times 10^{0}$}& \cellcolor{gray!25}{$1.3\times 10^{0}$} \\ 
		& DRO & $1.8\times 10^{0} $& $1.7\times 10^{0} $& $1.6\times 10^{0} $& $1.4\times 10^{0} $& $1.3\times 10^{0} $ \\ \noalign{\vskip 1pt} \hline \noalign{\vskip 1pt} 
		\multirow{3}{*}{50} & VI & \cellcolor{gray!25}{$1.9\times 10^{0}$}& \cellcolor{gray!25}{$1.8\times 10^{0}$}& \cellcolor{gray!25}{$1.7\times 10^{0}$}& \cellcolor{gray!25}{$1.6\times 10^{0}$}& \cellcolor{gray!25}{$1.5\times 10^{0}$} \\
		& DRO & $1.9\times 10^{0} $& $1.9\times 10^{0} $& $1.8\times 10^{0} $& $1.7\times 10^{0} $& $1.6\times 10^{0} $ \\ \noalign{\vskip 1pt} \hline \noalign{\vskip 1pt} 
	\end{tabular} 
\end{table}

\subsection{Learning a Quadratic Hypothesis}
\label{subsec:quadratic}
A fundamental problem in marketing is to understand the purchasing decisions of consumers, which is an essential prerequisite for estimating demand functions. In this section we study the inverse optimization problem of a marketing manager (the observer) aiming to learn the quadratic utility function that best explains the purchasing decisions of a consumer (the agent). This problem setup is inspired by~\cite{keshavarz2011imputing}.

\subsubsection{Consistent Noisy Measurements}
\label{subsub:quad:1}

{\bf Decision problem of the agent:}  Assume that there are $n$ products with prices $s \in \R_+^n$. The agent aims to select a basket of goods $x\in\R_+^n$  that minimizes the true objective function $F(s,x)=\inner{s}{x}- U(x)$, where $\inner{s}{x}$ represents the purchasing costs, while the concave quadratic function $U(x) \Let -\inner{x}{Q_{xx}\opt x}-\inner{q\opt}{x}$ captures the utility of consumption. The positive definite matrix $Q_{xx}\opt$ is constructed as follows. Sample a square matrix $A$ uniformly form $[-1,1]^{n\times n}$, denote by $R$ the orthogonal matrix consisting of the orthonormal eigenvectors of $(A+A\tr)/ 2$ and set $Q_{xx}\opt\Let R\tr D R$, where $D$ is a diagonal matrix whose main diagonal is sampled uniformly form $[0.2,1]^n$. Moreover, the gradient $q\opt$ is sampled uniformly from $[-2,0]^n$. Finally, define the agent's feasible set as $\X(s) = [0,5]^n$, which can be brought to the standard form of Assumption~\ref{As:conic} by setting
\[
	W=(\mathbb I, -\mathbb I)\tr \in\R^{2n\times n},~ H=(0,0)\tr\in\R^{2n\times m},~ h=5\cdot (0,-1)\tr\in\R^{2n} \text{ and } \mathcal K=\R_+^{2n}.
\]
As usual, for a fixed signal $s$, we denote the optimal value of the agent's true decision problem by $z\opt(s)$.

{\bf Generation of training samples:} Signals follow the uniform distribution on the support set $\S=[0,1]^n$, which can be brought to the standard form of Assumption~\ref{As:conic} by setting $C=(\mathbb I,-\mathbb I)\tr\in\R^{2n\times n}$, $d=(0,-1)\tr\in\R^{2n}$ and $\mathcal C=\R_+^{2n}$. As in Section~\ref{subsubsec:meas}, we assume that the agent's response to $s$ is noisy and constitutes a random $\delta$-suboptimal solution to~\eqref{opt_basic} for $\delta=0.2$. Thus, $x$ is obtained as a solution of the auxiliary problem
\[
	\min_{x\in\X(s)} \left\{ \inner{x}{Q_{\rm rand}x} : \inner{x}{Q_{xx}\opt x} + \inner{s}{x} + \inner{q\opt}{x} \leq z\opt(s) + \delta \right\},
\]
which minimizes a convex quadratic cost over the set of all $\delta$-suboptimal solutions to~\eqref{opt_basic}. Specifically, $Q_{\rm rand}$ is a diagonal matrix whose main diagonal is sampled uniformly form $[0,1]^n$. As $(s,x)\in\Xi$ by construction, the measurement noise is consistent with the know support of the exact signal-response pairs. The imperfect consistent training samples $(\sd_i, \xd_i)$, $i\le N$, are now generated independently using the above procedure.

{\bf Decision problem of the observer:} The observer aims to identify the best quadratic hypothesis of the form $F_\theta(s,x)\Let  \inner{x}{Q_{xx}x} + \inner{x}{s}+\inner{q}{x}$, where the parameter $\theta=(Q_{xx}, q)$ ranges over the search space
\[
\Theta=\Big\{ \theta=(Q_{xx}, q)\in \mathbb \R^{n\times n}\times \R^n ~:~ {Q_{xx} \succeq 0} \Big\}.
\]
Note that no hypothesis $F_\theta(s,x)$, $\theta\in\Theta$, can vanish identically due to the term $\inner{x}{s}$. Note also that the agent's true objective function corresponds to $\theta\opt=(Q_{xx}\opt,q\opt)\in\Theta$. We assume that the observer minimizes the suboptimality loss~\eqref{loss}, uses the expected value to measure risk and solves the distributionally robust inverse optimization problem~\eqref{DRO-inv-opt} over a $2$-Wasserstein ball around the empirical distribution on the training samples, where the $2$-norm is used as the transportation cost on $\Xi$. By Theorem~\ref{thm:Quad}, the emerging inverse optimization problem is conservatively approximated by the tractable semidefinite program~\eqref{cvar-quad}.

{\bf Out-of-sample risk:} The quality of an estimator $\wh \theta_N$ obtained from~\eqref{cvar-quad} is measured by its out-of-sample risk $\EE^{\P_{\rm out}} (\ell_{\wh \theta_N})$ both with respect to the predictability loss~\eqref{loss_predictability} and the suboptimality loss~\eqref{loss}, where $\P_{\rm out}$ represents the distribution of a single test sample $(s,x)$ independent of the training samples, and where $x$ is an exact (non-noisy) response to $s$ in~\eqref{opt_basic}. More precisely, the out-of-sample risk is evaluated approximately using 1{,}000 independent test samples from $\P_{\rm out}$.

{\bf Results:} All numerical results are averaged across $100$ independent problems instances $\{Q_{xx}\opt, q\opt \}$. Under the assumption that the signal and response dimensions are set to $n = 10$ and there are $N=20$ training samples, Figure~\ref{fig:quad:meas} shows how the out-of-sample risk of the optimal estimator $\wh \theta_N(\eps)$ obtained from~\eqref{DRO-inv-opt} changes with the Wasserstein radius $\eps$. As in Section~\ref{subsubsec:meas}, this experiment suggests that both the predictability and suboptimality risks can be reduced by using a distributionally robust inverse optimization model.

\begin{figure*}
	\centering
	\hspace{-6pt} \subfigure[Imperfect consistent training samples and perfect test samples]{\label{fig:quad:meas} \includegraphics[width=0.3\columnwidth]{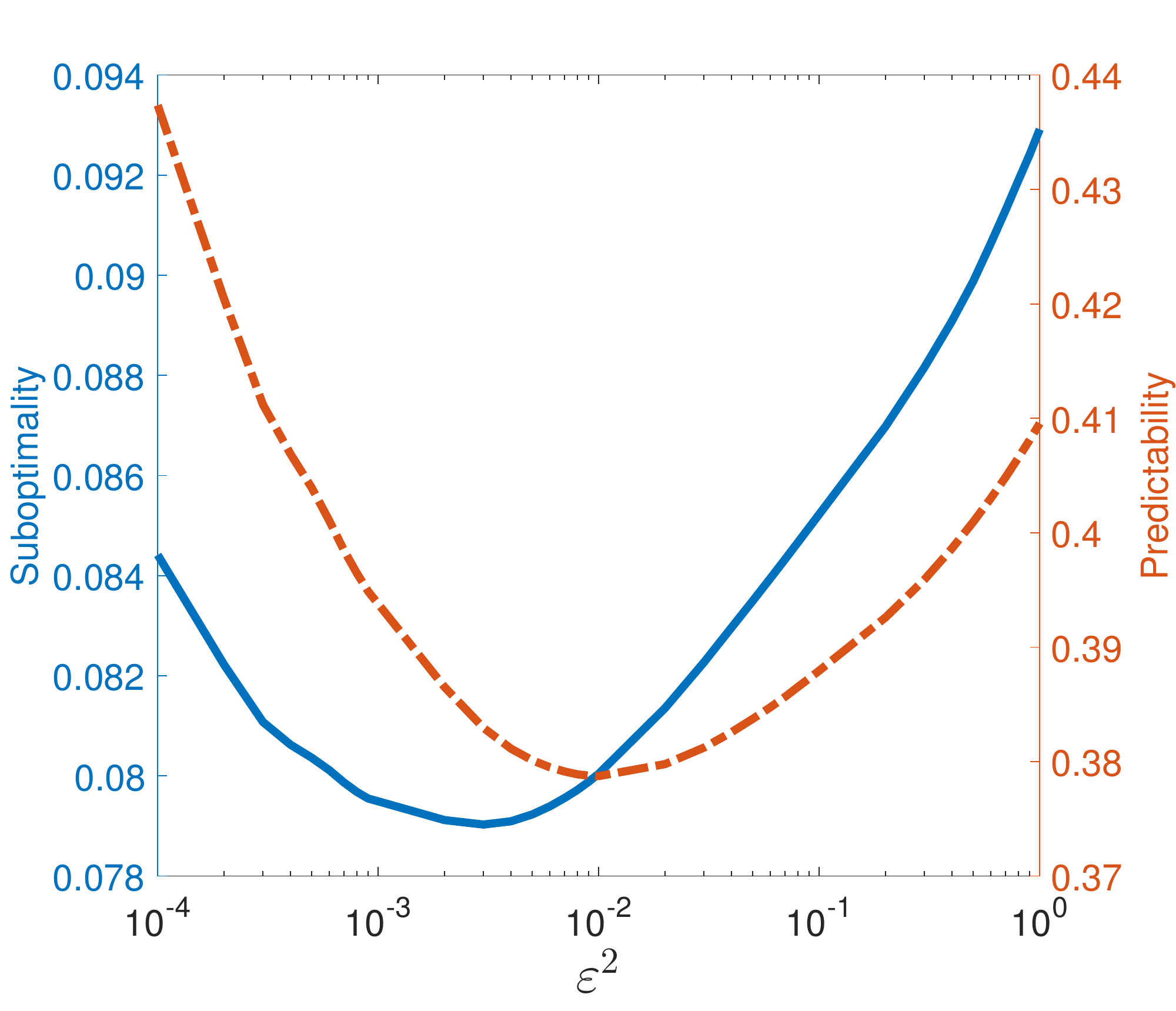}}\quad
	\subfigure[Imperfect inconsistent training samples and perfect test samples]{\label{fig:quad:meas2} \includegraphics[width=0.3\columnwidth]{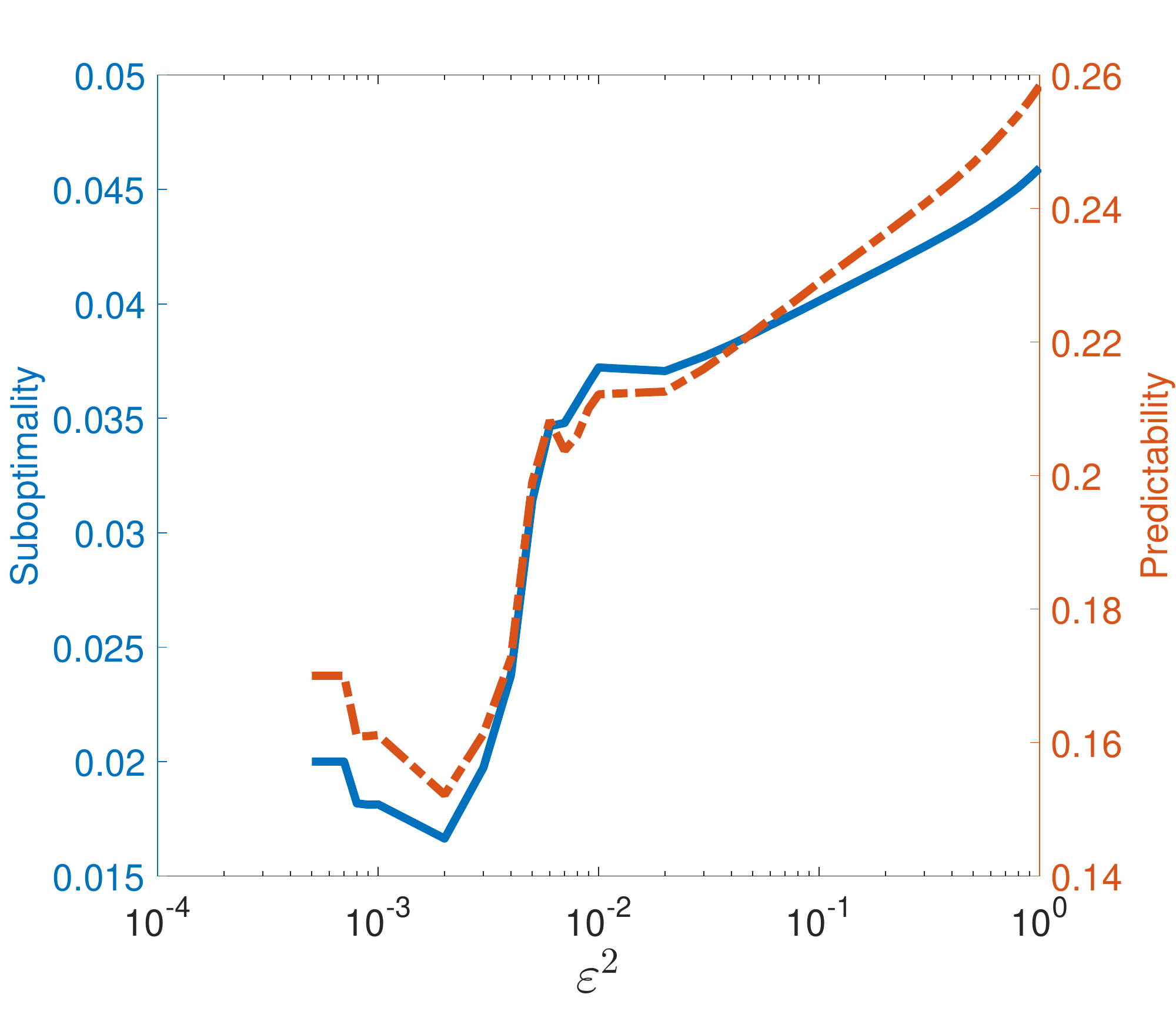}}\quad
	\subfigure[Model uncertainty]{\label{fig:quad:mismatch} \includegraphics[width=0.3\columnwidth]{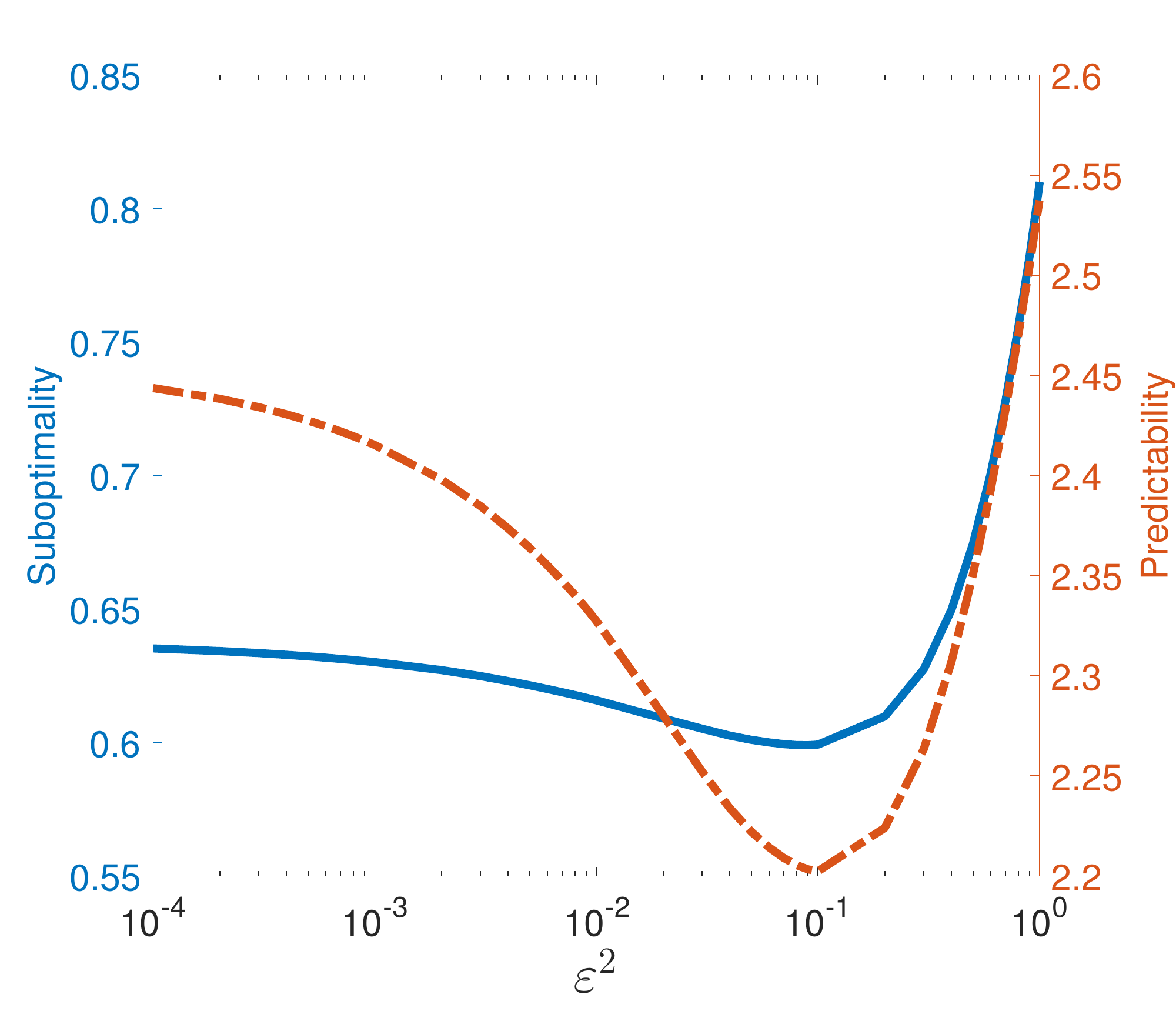}} 
	\caption{Impact of the Wasserstein radius on the out-of-sample suboptimality and predictability risk}
	\label{fig:quad}
\end{figure*}

\subsubsection{Inconsistent Noisy Measurements}
Assume now that the agent's optimal solutions to~\eqref{opt_basic} are corrupted by additive measurement noise that follows a uniform distribution on $[-0.1,0.1]^n$. Otherwise, we consider the exact same experimental setup as in Section~\ref{subsub:quad:1}. Under this premise, it is likely that some training responses are infeasible in~\eqref{opt_basic}, that is, $\xd_i\notin\X(\sd_i)$ and---{\em a fortiori}---$(\sd_i,\xd_i)\notin\Xi$ for some $i\le N$. These problematic training samples are inconsistent with the known support of the perfect signal-response pairs, and the corresponding empirical distribution $\Pem$ fails to be supported on $\Xi$. Thus, for all sufficiently small values of $\eps$ there exists no distribution $\Q$ on $\Xi$ with $\Wass{2}{\Q}{\Pem} \le \eps$, implying that the Wasserstein ball $\ball{2}{\Pem}{\eps}$ is empty, in which case the distributionally robust inverse optimization problem \eqref{cvar-quad} becomes meaningless. Figure~\ref{fig:quad:meas2} visualizes the out-of-sample suboptimality and predictability risk of the optimal estimator $\wh \theta(\eps)$ as a function of $\eps$. Note that for any fixed~$\eps$ the figure reports the out-of-sample risk averaged only across those problem instances $\{Q_{xx}\opt, q\opt \}$ for which $\ball{2}{\Pem}{\eps}\neq \emptyset$. Inspecting the results at the instance level, we observe that the out-of-sample risk is typically minimized by the smallest value of $\eps\geq 0$ for which $\ball{2}{\Pem}{\eps}\neq \emptyset$ (this is not evident from the aggregate results shown in Figure~\ref{fig:quad:meas2}). We conclude that a distributionally robust approach may be necessary for consistency.


 \subsubsection{Model Uncertainty}
Assume next that the agent's objective function is not contained in the set of hypotheses $F_\theta(s,x)$, $\theta \in \Theta$, but that the signals and the agent's responses are unaffected by noise. Specifically, in analogy to \cite{keshavarz2011imputing}, we assume that the true utility function is given by $U(x) \Let \inner{1}{\sqrt{Ax-b}}$, where $A$ is a diagonal matrix whose main diagonal is sampled uniformly from $[0.5,1]^n$, while $b$ is sampled uniformly from $[0,0.25]^n$. The square root is applied componentwise and evaluates to $-\infty$ for negative arguments. Otherwise, we consider the exact same experimental setup as in Section~\ref{subsub:quad:1}. Figure~\ref{fig:quad:mismatch} shows the out-of-sample suboptimality and predictability risk of the optimal estimator $\wh \theta(\eps)$ as a function of $\eps$, indicating that the best results are obtained for strictly positive Wasserstein radii, which enable the observer to combat over-fitting to the training samples.

\subsubsection{Comparison of Different Data-Driven Inverse Optimization Schemes}
In practice, the Wasserstein radii offering the lowest out-of-sample risk must be estimated from the training samples only. To this end, the DRO approach calibrates $\eps$ via $k$-fold cross validation as in Section~\ref{subsec:linear}. Another estimator is obtained by solving the {\em empirical risk minimization} (ERM) problem~\eqref{saa} using the empirical mean and the suboptimality loss~\eqref{loss}. This approach effectively mimics the DRO approach but sets $\eps=0$, which can be viewed as a trivial data-driven strategy to calibrate the Wasserstein radius.\footnote{We did not consider the ERM approach in Section~\ref{sec:lin} because it coincides with the VI approach for linear hypotheses.} The VI approach also disregards ambiguity and minimizes the empirical first-order loss by solving the semidefinite program 
\begin{align*}
\begin{array}{lll}
\text{\rm minimize} & \displaystyle \frac{1}{N}\sum\limits_{i = 1}^{N} |r_i| & \\
\text{\rm subject to} & \inner{W \widehat x_i - H \widehat s_i - h}{\gamma_i} \leq r_i & \forall i \leq N \\
& W^\top \gamma_i = 2 Q_{xx} x + \wh s_i  +q& \forall i \leq N \\
& \gamma_i \geq 0 & \forall i \leq N \\
& Q_{xx} \succeq 0,
\end{array}
\end{align*}
see \cite[Theorem~3]{bertsimas2012inverse}. As in Section~\ref{subsubsec:meas}, the absolute values of the residuals $r_i$ in the objective can be dropped whenever the training samples are consistent with~$\Xi$. We exclude the BP approach from this experiment because it leads to severely intractable mixed integer semidefinite programming problems.

All three approaches described above search over the {\em parametric} space of quadratic hypotheses. If the observer suspects that the true utility function fails to be quadratic, however, she may prefer a {\em non-parametric} approach that models the gradient of the utility function as a vector field~$f\in\mathcal H^n$, where~$\mathcal H$ represents a reproducing kernel Hilbert space of real-valued functions on $\R_+^n$, which is induced by a symmetric and positive definite kernel function $k:\R_+^n\times\R_+^n\rightarrow \R$. As~$\mathcal H^n$ is typically infinite-dimensional and contains multiple candidate gradients $f\in\mathcal H^n$ that explain the training data, it has been suggested in \cite[Section~5]{bertsimas2012inverse} to minimize the Hilbert norm $\sum_{i=1}^n\|f_i\|^2_\mathcal H$ of $f$ subject to the constraint that the residuals of the first-order optimality condition at the training samples satisfy $ \frac{1}{N}\sum_{i = 1}^{N} |r_i| \leq \kappa$ for some prescribed threshold~$\kappa\ge 0$. This amounts to finding the smoothest (with respect to the kernel function~$k$) candidate gradient $f\in\mathcal H^n$ that explains the training data to within accuracy~$\kappa$. By leveraging a generalized representer theorem, the resulting infinite-dimensional optimization problem can be reformulated as the tractable quadratic program
\begin{subequations}
\label{eq:non-parametric-correct}
\begin{align}
\label{eq:non-parametric}
\begin{array}{lll}
\text{\rm minimize} & \displaystyle \sum\limits_{i=1}^n \inner{e_i}{\alpha K \alpha^\top e_i} & \\
\text{\rm subject to} & \inner{W \widehat x_i - H \widehat s_i - h}{\gamma_i} \leq r_i & \forall i \leq N \\
& W^\top \gamma_i = \widehat s_i - \alpha Ke_i & \forall i \leq N \\
& \gamma_i \geq 0 & \forall i \leq N \\
& \displaystyle \frac{1}{N}\sum\limits_{i = 1}^{N} |r_i| \leq \kappa,
\end{array}
\end{align}
where $K\in \R^{N\times N}$ denotes the kernel matrix with entries $K_{ij}=k(\xd_i,\xd_j)$, $e_i$ stands for the $i$-th standard basis vector in a space of appropriate dimension, and $\alpha\in\R^{n\times N}$ denotes a decision variable that encodes the partial derivatives of the utility function via $f_i(x)=\sum_{j=1}^N \alpha_{ij}k(\xd_j,x)$; see \cite[Theorem~5]{bertsimas2012inverse}. 

In the inverse optimization context studied here, however, the above non-parametric VI approach suffers from two shortcomings that are not addressed in~\cite{bertsimas2012inverse}.

\begin{itemize}
\item[(i)] The inverse optimization problem~\eqref{eq:non-parametric} aims to learn the gradient field $f$ of the unknown utility function $U$. As the Hessian matrix of $U$ must be symmetric, the gradient field $f$ must satisfy
\[
	\frac{\partial f_i(x)}{\partial x_j} = \frac{\partial f_j(x)}{\partial x_i} \qquad \forall x\in\R_+^n,~\forall i,j=1,\ldots, n.
\]
By Stokes' theorem, this condition is necessary and sufficient to ensure that the utility function $U$ can be reconstructed uniquely (modulo an additive constant) from $f$. Specifically, this condition guarantees that the utility function is defined unambiguously through the line integral $U(x)=U(0)+\int_C \inner{f(x')}{{\rm d}x'}$, where $C$ represents an arbitrary piecewise smooth curve in $\R_+^n$ connecting $0$ and $x$. 
\item[(ii)] While the inverse optimization problem~\eqref{eq:non-parametric} represents a tractable quadratic program, the resulting gradient field $f$ may induce a non-concave utility function $U$, which means that the agent's objective function $F(s,x)=\inner{s}{x}-U(x)$ may have multiple local minima. Thus, even though learning $f$ is easy, predicting the optimal response $x$ to a given signal $s$ may require the solution of a hard non-convex optimization problem, which may severely complicate extensive out-of-sample tests. Similarly, evaluating the suboptimality loss $F(s,x)$ at a fixed signal-response pair is intractable.
\end{itemize}

Here we address the first challenge by using the polynomial kernel function $k(x,x')=(c\inner{x}{x'}+1)^p$, which allows us to include the missing symmetry conditions in~\eqref{eq:non-parametric} by appending the linear equality constraints
\begin{equation}
\label{eq:curl-free}
	\sum_{k=1}^N \Big( \alpha_{ik} [\xd_k]_j - \alpha_{jk} [\xd_k]_i\Big) \prod_{t=1}^{q-1} [\xd_k]_{l_t}=0 \quad \forall i,j=1,\ldots, n,~\forall 1\le l_1\le\cdots\le l_{q-1}\le n,~\forall q=1,\ldots,p.
\end{equation}
\end{subequations}
The second challenge does not have a simple remedy, and therefore we abandon the ideal goal to solve~\eqref{opt_basic} to global optimality. Instead, for any given signal $s$, we use the gradient field $f$ obtained from~\eqref{eq:non-parametric-correct} directly to find a local solution of~\eqref{opt_basic} via the classical subgradient descent algorithm \cite[Chapter~3]{bubeck2015convex}, where the step size is set to~$10^{-3}$ and an initial feasible solution is selected uniformly at random from $\X(s)$. Note that the focus on local minima in prediction is consistent with the use of the first-order loss~\eqref{loss_bertsimas} in inverse optimization because the first-oder loss cannot distinguish local and global optima and thus fails to penalize training samples $(\sd_i,\xd_i)$ where $\xd_i$ is a locally optimal but globally suboptimal response to $\sd_i$.

In our experiments we determine the parameter $c$ of the polynomial kernel via 5-fold cross validation. Once the gradient field $f$ has been inferred from~\eqref{eq:non-parametric-correct}, we construct the corresponding utility function by integrating $f$ along the straight line $C$ between 0 and $x$ with parameterization $g(t) = tx$ for $t \in [0,1]$, that is, we set
\begin{align*}
	U(x) &=U(0)+\int_C \inner{f(x')}{{\rm d}x'}= U(0) + \int_0^1 \inner{f(tx)}{x}\, \diff t  = U(0) + \int_0^t \sum_{i=1}^n \sum_{j=1}^N x_i \alpha_{ij}k(\xd_j,tx)\, \diff t\\
	&  = U(0) + \sum_{i=1}^n \sum_{j=1}^N  x_i \alpha_{ij} \int_0^1 (c\inner{tx}{\xd_j}+1)^p\, \diff t = U(0) + \sum_{i=1}^n \sum_{j=1}^N  x_i\alpha_{ij} \frac{\left(c\inner{x}{\widehat x_j}+1\right)^{p+1} - 1}{c (p+1) \inner{x}{\widehat x_j}}.
\end{align*}
Table~\ref{table:quad} reports the out-of-sample suboptimality and predictability risks, respectively, for the DRO, VI and ERM estimators. The non-parametric VI approach is exclusively used in the presence of model uncertainty, which is the only scenario in which it has a chance to outperform the more parsimonious parametric approaches. Our results show that the  DRO estimator consistently attains the lowest suboptimality and predictability risk among all parametric approaches. We emphasize that the suboptimality and predicatability risk of the non-parametric VI approach are evaluated with respect to the local minimum identified by the subgradient descent algorithm and are therefore largely meaningless. In fact, we observed that the suboptimality risk becomes even negative for certain instances in which the global minimum of~\eqref{opt_basic} could not be found. This artefact explains the low suboptimality risk of the non-parametric VI approach with $p=3$. 
Note also that for $p\ge 4$ the number of symmetry conditions \eqref{eq:curl-free} explodes, and thus \eqref{eq:non-parametric-correct} can no longer be solved. We also reconfirm our earlier observation that injecting robustness reduces out-of-sample risk.

\begin{table}
	\centering 
	\caption{Data-driven inverse optimization: Out-of-sample suboptimality and predictability risk of the VI, ERM and DRO approaches in different experimental settings} 
	\begin{tabular}{llll}\hline 
		&  Methods & Suboptimality & Predictability \\ \hline 
		\multirow{2}{*}{Consistent noisy} & VI (Parametric)& $3.2\times 10^{-1}$ & $1.3\times 10^{0}$ \\ 
		\multirow{2}{*}{measurements} & ERM & $8.6\times 10^{-2}$ & $4.7\times 10^{-1}$ \\ 
		& DRO & \cellcolor{gray!25}$7.9\times 10^{-2}$ & \cellcolor{gray!25}$3.9\times 10^{-1}$ \\ \hline 
		\multirow{2}{*}{Inconsistent noisy} &  VI (Parametric)& $9.2\times 10^{-2}$ & $6.3\times 10^{-1}$ \\ 
		\multirow{2}{*}{measurements} & ERM & unbounded & unbounded \\ 
		& DRO &\cellcolor{gray!25}$4.0\times 10^{-2}$ &\cellcolor{gray!25}$2.4\times 10^{-1}$ \\ \hline
		\multirow{5}{*}{Model uncertainty } & VI (Parametric)& $8.3\times 10^{-1}$ & $2.8\times 10^{0}$ \\ 
		& VI (Non-parametric, $p=2$) & $1.9 \times 10^{0}$ & $4.8\times 10^{0}$ \\ 
		& VI (Non-parametric, $p=3$) & \cellcolor{gray!25}$4.0 \times 10^{-1}$ & $6.4\times 10^{0}$ \\ 
		& ERM & $6.2\times 10^{-1}$ & $2.4\times 10^{0}$ \\ 
		& DRO & $6.0\times 10^{-1}$ & \cellcolor{gray!25}$2.2\times 10^{0}$ \\ \hline 
	\end{tabular} 
	\label{table:quad}
\end{table}
}

\begin{flushleft}
	{\bfseries Acknowledgments:} \\ This work was supported by the Swiss National Science Foundation grant BSCGI0\_157733.
\end{flushleft}

\appendix

\section{Convex Hypotheses}
\label{app:convex}
{
Consider the class $\F$ of hypotheses of the form $F_\theta(s,x)\Let \inner{\theta}{\Psi(x)}$, where each component function of the feature map $\Psi:\R^n\rightarrow\R^d$ is convex, and where the weight vector $\theta$ ranges over a convex closed search space $\Theta\subseteq \R^d_+$. Thus, by construction, $F_\theta(s,x)$ is convex in $x$ for every fixed  $\theta\in\Theta$. In the remainder we will assume without much loss of generality that the transformation from the signal-response pair $(s,x)$ to the signal-feature pair $(s,\Psi(x))$ is Lipschitz continuous with Lipschitz modulus~1, that is, we require
\begin{equation}
\label{eq:lipschitz}
	\| (s_1,\Psi(x_1)) - (s_2,\Psi(x_2)) \| \leq \| (s_1,x_1) - (s_2,x_2) \|\quad \forall (s_1,x_1), (s_2,x_2)\in\R^m\times\R^n.
\end{equation}
Before devising a safe conic approximation for the inverse optimization problem~\eqref{DRO-inv-opt} with convex hypotheses, we recall that the conjugate of a function $f:\R^n\rightarrow\R$ is defined through $f^*(z)=\sup_{x\in\R^n} \inner{z}{x}-f(x)$.

\begin{Thm}[Convex hypotheses and suboptimality loss]
	\label{thm:conv}
	Assume that $\F$ represents the class of convex hypotheses induced by the feature map $\Psi$ and with a convex closed search space $\Theta\subseteq \R_+^d$ and that Assumption~\ref{As:conic} holds. If the observer uses the suboptimality loss~\eqref{loss} and measures risk using the $\cvar$ at level $\alpha\in(0,1]$, then the following convex program provides a safe approximation for the distributionally robust inverse optimization problem~\eqref{DRO-inv-opt} over the $1$-Wasserstein ball:
	\begin{align}
	\label{cvar-conv}
	\begin{array}{llll} \text{\em minimize} & \displaystyle \tau+\frac{1}{\alpha}\left({\varepsilon} \lambda  + {1 \over N}\sum\limits_{i = 1}^{N} r_i\right) \vspace{1mm}\\
	\text{\em subject to} & \theta\in\Theta
	,\;\;	\lambda \in\mathbb R_+,\;\;\tau, r_i\in\mathbb R,\;\; \phi_{i1},\phi_{i2}\in \mathcal C^*, \;\;\gamma_{i} \in \mathcal K^* ,\;\; z_{ij}\in\R^n &\forall i \le N,\forall j\le d\\ 
	& \displaystyle \sum_{j=1}^d \theta_j \Psi^*_j(z_{ij}/\theta_j) + \inner{\theta}{\Psi(\widehat x_i)} + \inner{\phi_{i1}}{C \widehat s_i - d}- \inner{\gamma_i}{H \widehat s_i + h} \leq r_i + \tau&\forall i \le N \vspace{1mm}\\
	& \displaystyle \sum_{j=1}^d z_{ij} = W^\top \gamma_i &\forall i \le N \vspace{1mm}\\		
	& \inner{C \widehat s_i-d}{\phi_{i2}} \leq r_i &\forall i \le N \vspace{1mm}\\
	& \left\|\begin{pmatrix} H^\top \gamma_i - C^\top \phi_{i1} \\ \theta \end{pmatrix} \right\|_* \le \lambda,\;\; \left\|\begin{pmatrix} C^\top \phi_{i2} \\ 0 \end{pmatrix} \right\|_* \le \lambda &\forall i \le N.
	\end{array}
	\end{align}
\end{Thm}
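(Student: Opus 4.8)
The plan is to follow the blueprint of the proof of Theorem~\ref{thm:lin}. Starting from the variational representation~\eqref{cvar} of the $\cvar$, I first invoke Sion's minimax theorem to interchange the supremum over $\Q\in\ball{1}{\Pem}{\eps}$ with the infimum over $\tau$; this is legitimate since the $1$-Wasserstein ball is weakly compact. By the strong-duality result \cite[Theorem~4.2]{ref:MohKun-14}, the subordinate worst-case expectation $\sup_{\Q\in\ball{1}{\Pem}{\eps}}\EE^\Q[\max\{\ell_\theta(s,x)-\tau,0\}]$ then equals the optimal value of the semi-infinite program that minimises $\eps\lambda+\tfrac1N\sum_{i}r_i$ over $\lambda\ge 0,\,r_i\in\R$ subject to $\sup_{(s,x)\in\Xi}\{\ell_\theta(s,x)-\tau-\lambda\|(s,x)-(\sd_i,\xd_i)\|\}\le r_i$ and $\sup_{(s,x)\in\Xi}\{-\lambda\|(s,x)-(\sd_i,\xd_i)\|\}\le r_i$ for every $i\le N$. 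Feeding a conservative conic reformulation of these two constraint groups back into the $\cvar$ formula will reproduce~\eqref{cvar-conv}.

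The crux---and the reason only a \emph{safe} approximation results---is that for convex hypotheses the suboptimality loss $\ell_\theta(s,x)=\inner{\theta}{\Psi(x)}-\min_{y\in\X(s)}\inner{\theta}{\Psi(y)}$ is \emph{convex} rather than concave in the response $x$, because $\theta\ge 0$ and each $\Psi_j$ is convex; hence the inner maximisation in the first constraint group is a difference-of-convex program and admits no exact dual. I would circumvent this with three conservative steps. (i)~Dualising the inner conic program $\min_{y}\{\,g(y):Wy\gec{\coneX}Hs+h\,\}$, where $g\Let\sum_{j}\theta_j\Psi_j$ and Slater's condition holds by Assumption~\ref{As:conic}, yields $\min_{y\in\X(s)}g(y)=\max_{\gamma\in\coneX^*}\{\inner{\gamma}{Hs+h}-g^*(W\tr\gamma)\}$, so for \emph{any} multiplier $\gamma_i\in\coneX^*$ one has $\ell_\theta(s,x)\le\inner{\theta}{\Psi(x)}+g^*(W\tr\gamma_i)-\inner{\gamma_i}{Hs+h}$; fixing a single $\gamma_i$ rather than one depending on $(s,x)$ is the first source of conservatism. (ii)~The Lipschitz condition~\eqref{eq:lipschitz} gives $\lambda\|(s,x)-(\sd_i,\xd_i)\|\ge\lambda\|(s,\Psi(x))-(\sd_i,\Psi(\xd_i))\|$, after which the maximand depends on $x$ only through the feature $\psi\Let\Psi(x)$, and the sole $\psi$-term $\inner{\theta}{\psi}$ is now linear. (iii)~Relaxing $\psi\in\Psi(\X(s))$ to $\psi\in\R^d$ leaves a concave maximisation of $\inner{\theta}{\psi}-\inner{\gamma_i}{Hs}-\lambda\|(s,\psi)-(\sd_i,\Psi(\xd_i))\|$ over $\{s:Cs\gec{\coneXi}d\}\times\R^d$.

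This last problem succumbs to the usual recipe: dualise $Cs\gec{\coneXi}d$ with $\phi_{i1}\in\coneXi^*$, split the conjugate via $g^*(w)=\inf\{\sum_j\theta_j\Psi_j^*(z_{ij}/\theta_j):\sum_j z_{ij}=w\}$ (each summand being the perspective of the convex function $\Psi_j^*$, jointly convex in $(\theta_j,z_{ij})$ and extended by closure at $\theta_j=0$), and impose on the feature space $\R^m\times\R^d$ the dual-norm bound $\|(H\tr\gamma_i-C\tr\phi_{i1},\theta)\|_*\le\lambda$ that makes the linear-minus-norm objective bounded; evaluating the resulting dual value at the datum $(\sd_i,\xd_i)$ then delivers the first and third constraint groups of~\eqref{cvar-conv} together with the first norm bound. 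The second semi-infinite constraint is easier, its integrand $-\lambda\|\cdot\|$ being already concave: relaxing $x$ (equivalently $\psi$) to range freely places the maximiser at the datum, so only the signal contributes to the transportation cost, and dualising $Cs\gec{\coneXi}d$ with $\phi_{i2}\in\coneXi^*$ yields $\inner{C\sd_i-d}{\phi_{i2}}\le r_i$ and the bound $\|(C\tr\phi_{i2},0)\|_*\le\lambda$. Since every step either enlarges a feasible region or replaces a quantity by an upper bound, the conic program~\eqref{cvar-conv} is a conservative reformulation of~\eqref{DRO-inv-opt}, as claimed. I expect the only delicate points of the full write-up to be the sign bookkeeping through the chain of conic duals and the careful handling of the boundary $\theta_j=0$ of the perspective functions; everything else is routine.
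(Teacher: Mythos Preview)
Your proposal is correct and uses the same ingredients as the paper's proof: Sion's theorem, the Wasserstein strong dual from \cite[Theorem~4.2]{ref:MohKun-14}, the Lipschitz bound~\eqref{eq:lipschitz}, the relaxation of the feature $\psi$ to all of $\R^d$, conic duality for the signal, and the inf-convolution decomposition $g^*(w)=\inf\{\sum_j\theta_j\Psi_j^*(z_j/\theta_j):\sum_j z_j=w\}$ with the perspective interpretation at $\theta_j=0$.

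The only difference is the order of operations. The paper first lifts the \emph{entire} worst-case expectation to the signal-feature space $\S\times\R^d$: it defines $\ell^\Psi_\theta(s,\psi)=\inner{\theta}{\psi}-\min_{y\in\X(s)}\inner{\theta}{\Psi(y)}$ and the feature-space empirical $\widehat\P^\Psi_N=\frac1N\sum_i\delta_{(\sd_i,\Psi(\xd_i))}$, shows via~\eqref{eq:lipschitz} and a support relaxation that $\sup_{\Q\in\ball{1}{\Pem}{\eps}}\EE^\Q[\max\{\ell_\theta-\tau,0\}]\le\sup_{\Q\in\ball{1}{\widehat\P^\Psi_N}{\eps}}\EE^\Q[\max\{\ell^\Psi_\theta-\tau,0\}]$, and only \emph{then} applies \cite[Theorem~4.2]{ref:MohKun-14} to the lifted problem. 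The resulting semi-infinite constraints are $\sup_{(s,y)\in\Xi,\,\psi\in\R^d}\inner{\theta}{\psi-\Psi(y)}-\tau-\lambda\|(s,\psi)-(\sd_i,\Psi(\xd_i))\|\le r_i$, where the inner $y$ still appears as a primal variable and is handled together with $s$ and $\psi$ in a single conic dual (your step~(i) is thus absorbed into that dual). Your route---apply \cite[Theorem~4.2]{ref:MohKun-14} on the original $(s,x)$-space first, then peel off the inner minimisation with a fixed $\gamma_i$, then lift constraint by constraint---yields the same program; the paper's ordering merely packages the two relaxations~(ii)--(iii) as a single inequality between Wasserstein balls, which makes the argument slightly more streamlined but is not materially different.
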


Note that Theorem~\ref{thm:conv} remains valid if $(\sd_i, \xd_i)\notin\Xi$ for some $i\le N$.

\begin{proof}[Proof of Theorem~\ref{thm:conv}]
	As in the proof of Theorem~\ref{thm:lin} one can show that the objective function of the inverse optimization problem~\eqref{DRO-inv-opt} is equivalent to~\eqref{wc_cvar}. In the remainder, we derive a safe conic approximation for the (intractable) subordinate worst-case expectation problem
	\begin{align}
	\label{wc-exp-conv}
	\sup_{\Q \in \ball{1}{\Pem}{\eps}}  \EE^\Q\big[\max\{\ell_\theta(s,x) - \tau, 0\} \big].
	\end{align} 
	To this end, note that the suboptimality loss $\ell_\theta(s,x) = \inner{\theta}{\Psi(x)} - \min_{y \in \mathbb X(s)} \inner{\theta}{\Psi(y)}$ depends on $x$ only through $\Psi(x)$. This motivates us to define a lifted suboptimality loss $\ell^\Psi_\theta(s,\psi) = \inner{\theta}{\psi} - \min_{y \in \mathbb X(s)} \inner{\theta}{\Psi(y)}$, where $\psi$ represents an element of the feature space~$\R^d$, and the empirical distribution $\widehat \P_N^\Psi = \frac{1}{N}\sum_{i=1}^N \delta_{(\widehat s_i, \Psi(\widehat x_i))}$ of the signal-feature pairs. Moreover, we denote by $\mathbb B^1_{\varepsilon} (\widehat \P_N^\Psi)$ the 1-Wasserstein ball of all distributions on $\mathbb S \times \mathbb R^d$ that have a distance of at most $\eps$ from $\widehat \P_N^\Psi$. In the following we will show that the worst-case expectation
	\begin{align} \label{lifted}
	\sup_{\mathbb Q \in \mathbb B^1_{\varepsilon} (\widehat \P_N^\Psi)} \EE^\Q\big[\max\{\ell_\theta(s,\psi) - \tau, 0\} \big]
	\end{align} 
	on the signal-feature space $\mathbb S \times \mathbb R^d$ provides a tractable upper bound on the worst-case expectation~\eqref{wc-exp-conv}. 
	By Definition~\ref{def:wass}, each distribution $\Q\in \mathbb B^1_{\varepsilon} (\widehat \P_N)$ corresponds to a transportation plan $\Pi$, that is, a joint distribution of two signal-response pairs $(s,x)$ and $(s',x')$ under which $(s,x)$ has marginal distribution $\Q$ and $(s',x')$ has marginal distribution $\widehat \P_N$. By the law of total probability, the transportation plan can be expressed as $\Pi=\frac{1}{N}\sum_{i=1}^N \delta_{(\sd_i,\xd_i)}\otimes \Q_i$, where $\Q_i$ denotes the conditional distribution of $(s,x)$ given $(s',x')=(\sd_i,\xd_i)$, $i\le N$, see also~\cite[Theorem~4.2]{ref:MohKun-14}. Thus, we the worst-case expectation~\eqref{wc-exp-conv} satisfies
	\begin{align*}
	& \begin{array}{rll}
	\displaystyle \sup_{\Q \in \ball{1}{\Pem}{\eps}}  \EE^\Q\big[\max\{\ell_\theta(s,x) - \tau, 0\} \big] =& \sup\limits_{\Q^i} & \displaystyle\frac{1}{N} \sum\limits_{i=1}^N \int_{\Xi} \max\{\ell_\theta(s,\Psi(x)) - \tau, 0\} \, \Q^i(\diff s, \diff x) \\
	& \st & \displaystyle \frac{1}{N} \sum\limits_{i=1}^N \int_{\Xi} \| (s,x) - (\widehat s_i, \widehat x_i ) \| \, \Q^i(\diff s, \diff x) \leq \eps \\
	& & \displaystyle\int_{\Xi} \Q^i(\diff s, \diff x) = 1 \qquad \forall i \leq N \\
	\leq & \sup\limits_{\Q^i} & \displaystyle\frac{1}{N} \sum\limits_{i=1}^N \int_{\Xi} \max\{\ell_\theta(s,\Psi(x)) - \tau, 0\} \, \Q^i(\diff s, \diff x) \\
	& \st & \displaystyle \frac{1}{N} \sum\limits_{i=1}^N \int_{\Xi} \| (s,\Psi(x)) - (\widehat s_i, \Psi(\widehat x_i) ) \| \, \Q^i(\diff s, \diff x) \leq \eps \\
	& & \displaystyle\int_{\Xi} \Q^i(\diff s, \diff x) = 1 \qquad \forall i \leq N \\
	\leq & \sup\limits_{\Q^i} & \displaystyle\frac{1}{N} \sum\limits_{i=1}^N \int_{\mathbb S \times \R^d} \max\{\ell_\theta(s,\psi) - \tau, 0\} \, \Q^i(\diff s, \diff \psi) \\
	& \st & \displaystyle \frac{1}{N} \sum\limits_{i=1}^N \int_{\mathbb S \times \R^d} \| (s,\psi) - (\widehat s_i, \Psi(\widehat x_i) ) \| \, \Q^i(\diff s, \diff \psi) \leq \eps \\
	& & \displaystyle\int_{\mathbb S \times \R^d} \Q^i(\diff s, \diff \psi) = 1 \qquad \forall i \leq N,
	\end{array}
	\end{align*}
	where the first inequality follows from~\eqref{eq:lipschitz}, while the second inequality follows from relaxing the implicit condition that the signal-feature pair $(s,\psi)$ must be supported on $\{(s,\Psi(x)):(s,x)\in\Xi\}\subseteq\S\times\R^d$. Using a similar reasoning as before, the last expression is readily recognized as the worst-case expectation~\eqref{lifted}. Thus,~\eqref{lifted} provides indeed an upper bound on~\eqref{wc-exp-conv}. Duality arguments borrowed from~\cite[Theorem~4.2]{ref:MohKun-14} imply that the infinite-dimensional linear program~\eqref{lifted} admits a strong dual of the form
	\begin{align}
	\label{semiinf-conv2}
	\begin{array}{cll}
	\inf\limits_{\lambda \geq 0, r_i} & \displaystyle \lambda \varepsilon + \frac{1}{N} \sum_{i=1}^N r_i & \\
	\text{s.t.} & \sup\limits_{(s,y) \in \Xi, \psi \in \R^d} \inner{\theta}{\psi - \Psi(y)} - \tau - \lambda \| (s,\psi) - (\widehat s_i, \Psi(\widehat x_i)) \| \leq r_i & \forall i \leq N \\
	&\sup\limits_{s \in \mathbb S, \psi \in \R^d} -\lambda \| (s,\psi) - (\widehat s_i, \Psi(\widehat x_i)) \| \leq r_i & \forall i \leq N.
	\end{array} 
	\end{align}
	By using the definitions of $\mathbb S$ and $\X(s)$ put forth in Assumption~\ref{As:conic}, the $i$-th member of the first constraint group in~\eqref{semiinf-conv2} is satisfied if and only if the optimal value of the maximization problem
	\begin{align}
	\label{conic-conv}
	\begin{array}{cl}
	\sup\limits_{s,y,\psi} & \inner{\theta}{\psi - \Psi(y)} - \tau - \lambda \| (s,\psi) - (\widehat s_i, \Psi(\widehat x_i)) \| \vspace{1mm}\\
	\st &   C s  \gec{\coneXi} d,\; Wy \gec{\coneX} Hs + h
	\end{array}
	\end{align}
	does not exceed $r_i$. A tedious but routine calculation shows that the dual of \eqref{conic-conv} can be represented as
	\begin{align}
	\label{conic-conv-dual}
	\begin{array}{cll}
	\inf\limits_{p_i, q_i, \gamma_i, \phi_{i1}, z_{ij}} & \displaystyle \sum_{j=1}^d \theta_j \Psi^*_j(z_{ij}/\theta_j) - \tau + \inner{\theta}{\Psi(\widehat x_i)} + \inner{\phi_{i1}}{C \widehat s_i - d} - \inner{\gamma_i}{H \widehat s_i + h} \\
	\text{s.t.} & \sum_{j=1}^d z_{ij} = W^\top \gamma_i \\
	& \| (H^\top \gamma_i - C^\top \phi_{i1}, \theta) \|_* \leq \lambda \\
	& \gamma_i \in \mathcal K^*, \phi_{i1} \in \mathcal C^*.
	\end{array}
	\end{align}
	Note that the perspective functions $\theta_j \Psi^*_j(z_{ij}/\theta_j)$ in the objective of~\eqref{conic-conv-dual} emerge from taking the conjugate of $\theta_j \Psi_j(y)$. Thus, for $\theta_j=0$ we must interpret $\theta_j \Psi^*_j(z_{ij}/\theta_j)$ as an indicator function in $z_{ij}$ which vanishes for $z_{ij}=0$ and equals $\infty$ otherwise. By weak duality, \eqref{conic-conv-dual} provides an upper bound on \eqref{conic-conv}. We conclude that the $i$-th member of the first constraint group in~\eqref{semiinf-conv2} is satisfied whenever the dual problem~\eqref{conic-conv-dual} has a feasible solution whose objective value does not exceed $r_i$. A similar reasoning shows that the $i$-th member of the second constraint group in~\eqref{semiinf-conv2} holds if and only if there exists $\phi_{i2} \in \coneXi^*$ such that
	\begin{align*}
	&\begin{array}{clll} &  \inner{C\sd_i-d}{\phi_{i2}} \leq r_i \quad \text{and} \quad \left\|\begin{pmatrix}C\tr\phi_{i2} \\ 0\end{pmatrix} \right\|_* \le \lambda.
	\end{array}
	\end{align*}
	Thus, the worst-case expectation~\eqref{wc-exp-conv} is bounded above by the optimal value of the finite convex program
	\begin{align*}
	\begin{array}{clll} \Inf{} & \displaystyle {\eps} \lambda  + {1 \over N}\sum\limits_{i = 1}^{N} r_i \vspace{1mm}\\
	\st & 	\lambda \in\mathbb R_+,\;r_i\in\R,\; \phi_{i1},\phi_{i2}\in \coneXi^*, \;\gamma_{i}\in \coneX^* &\forall i \le N \\
	& \sum_{j=1}^d \theta_j \Psi^*_j(z_{ij}/\theta_j) + \inner{\theta}{\Psi(\widehat x_i)} + \inner{\phi_{i1}}{C \widehat s_i - d} - \inner{\gamma_i}{H \widehat s_i + h} \leq r_i + \tau&\forall i \le N \vspace{1mm}\\
	& \sum_{j=1}^d z_{ij} = W^\top \gamma_i &\forall i \le N \vspace{1mm}\\		
	& \inner{C \widehat s_i-d}{\phi_{i2}} \leq r_i\vspace{1mm}\\
	& \left\|\begin{pmatrix} H^\top \gamma_i - C^\top \phi_{i1} \\ \theta \end{pmatrix} \right\|_* \le \lambda,\;\; \left\|\begin{pmatrix} C^\top \phi_{i2} \\ 0 \end{pmatrix} \right\|_* \le \lambda &\forall i \le N.
	\end{array}
	\end{align*}
	The claim then follows by substituting this convex program into~\eqref{wc_cvar}. 
\end{proof}
}
	
\bibliographystyle{siam}	
\bibliography{ref} 

\end{document}